\def\sign{\mathrm{sign\,}}
\newcommand{\Sn}[1]{\mathbf{S}^#1}
\def\sn{\,\mathrm{sn}}
\def\cn{\,\mathrm{cn}}
\def\dn{\,\mathrm{dn}}
\newcommand{\R}[1]{\mathbf{R}^{#1}}
\newcommand{\Mink}{\mathbf{M}^{3}}
\newcommand{\Hyp}{\mathcal{H}}
\newcommand{\ip}[2]{\left<  #1, #2 \right>}
\newcommand{\ep}{\varepsilon}
\numberwithin{equation}{section}
\newtheorem{theorem}{Theorem}[section]
\newtheorem{proposition}[theorem]{Proposition}
\newtheorem{corollary}[theorem]{Corollary}
\newtheorem{lemma}[theorem]{Lemma}
\newtheorem{remark}[theorem]{Remark}
\newtheorem{example}[theorem]{Example}
\newtheorem{definition}[theorem]{Definition}
\newcommand{\twobytwo}[4]{\left(\begin{array}{cc}
		#1 & #2  \\
		#3 & #4 \end{array} \right)}
\newcommand{\threebythree}[9]{\left( \begin{array}{ccc}
		#1 & #2 & #3 \\
		#4 & #5 & #6 \\
		#7 & #8 & #9 \end{array} \right)}
\newcommand{\fourbyfour}[9]{\left( \begin{array}{cccc}
		#1 & #2 & \cdots & #3 \\
		#4 & #5 & \cdots & #6\\
		\vdots & \vdots & \cdots  & \vdots \\
		#7 & #8 & \cdots & #9 \end{array} \right)}
\newcommand{\iltwobytwo}[4]{\left(\begin{smallmatrix} #1 & #2 \\ #3 & #4 \end{smallmatrix}\right)} 
\newcommand{\ilvector}[2]{(\begin{smallmatrix} #1 \\ #2 \end{smallmatrix})} 
\newenvironment{proof}[1][Proof]{\noindent\textit{#1.} }{\hfill$\Box$\medskip}
 \title{Integrable Billiards on a Minkowski Hyperboloid: Extremal Polynomials and Topology}
\author[1,3]{Vladimir Dragovi\'c}
\author[2]{Sean Gasiorek}
\author[2,3]{Milena Radnovi\'c}
\affil[1]{\textsc{The University of Texas at Dallas, Department of Mathematical Sciences}}
\affil[2]{\textsc{The University of Sydney, School of Mathematics and Statistics}}
\affil[3]{\textsc{Mathematical Institute SANU, Belgrade}}
\affil[ ]{\texttt{vladimir.dragovic@utdallas.edu, sean.gasiorek@sydney.edu.au, milena.radnovic@sydney.edu.au}}
\date{}
\begin{document}

\maketitle

\

\centerline{
\emph{Dedicated to R.~Baxter on the occasion of his 80-th anniversary.}
}

\

\begin{abstract}
We consider billiard systems within compact domains bound\-ed by confocal conics on a hyperboloid of one sheet in the Minkowski space. We derive conditions for elliptic periodicity for such billiards. We describe the topology of those billiard systems in terms of Fomenko invariants. We provide then periodicity conditions in terms  of functional Pell equations and related extremal polynomials.
Several examples are computed in terms of elliptic functions and classical Chebyshev  and  Zolotarev polynomials, as extremal polynomials over one or two intervals. These results are contrasted with the cases of billiards in the Minkowski and the Euclidean planes. 

\smallskip

\emph{Keywords:} Billiards, Minkowski space, hyperboloid, confocal quadrics, periodic trajectories, Zolotarev  polynomials, Chebyshev polynomials, Fomenko invariants

\smallskip

\textbf{MSC2020:} 37C83, 37J70, 37J38,37J46, 41A50, 14H70 
\end{abstract}

\tableofcontents

\section{$2-2$-symmetric relations, elliptic functions, and elliptical billiards}\label{intro}

The last section of Baxter's celebrated book (\cite{Bax82}, p.~471) starts with:
\begin{quote}
	\emph{``In the Ising, eight-vertex and hard hexagon models we
		encounter symmetric biquadratic relations, of the form
		(\ref{eq:biquadratic})''}.
\end{quote}

\begin{equation}\label{eq:biquadratic}
E: au^2v^2+b(u^2v+uv^2)+c(u^2+v^2)+2duv+e(u+v)+f=0.
\end{equation}

In the sequel of the last section of \cite{Bax82}, Baxter derives an
elliptic parametrization of a symmetric biquadratic, providing an effective proof of the classical theorem of Euler, which denoted the beginning
of the study of elliptic functions and related addition theorems
(see \cite{Euler1766}).

\begin{theorem}[Euler theorem, 1766]
	For the general symmetric $2-2$-correspondence (\ref{eq:biquadratic})
	there exists an even elliptic function $\phi$ of the second degree
	and a constant shift $c$ such that
	$$
	u=\phi(z),\quad v=\phi(z\pm c).
	$$
\end{theorem}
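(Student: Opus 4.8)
The plan is to realise the symmetric biquadratic $E$ as a genus-one curve, uniformise it by a torus $\mathbb{C}/\Lambda$, and then read off the two coordinate projections as shifts of one and the same even function, the shift being dictated by the $u\leftrightarrow v$ symmetry.

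First I would view (\ref{eq:biquadratic}) as the equation of a curve $E\subset\mathbb{P}^1\times\mathbb{P}^1$ of bidegree $(2,2)$. Reading it as a quadratic in $v$, namely $A(u)v^2+B(u)v+C(u)=0$ with $A=au^2+bu+c$, $B=bu^2+2du+e$, $C=cu^2+eu+f$, exhibits the first projection $\pi_u\colon E\to\mathbb{P}^1$ as a double cover branched over the zeros of the discriminant $\Delta(u)=B(u)^2-4A(u)C(u)$. For a general choice of coefficients $\Delta$ is a quartic with four distinct roots, $E$ is smooth, and a smooth curve of bidegree $(2,2)$ on $\mathbb{P}^1\times\mathbb{P}^1$ has genus $(2-1)(2-1)=1$; equivalently, $w^2=\Delta(u)$ already displays $E$ as an elliptic curve. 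I would then fix an isomorphism $E\cong\mathbb{C}/\Lambda$.

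Next I would analyse the two projections. By Riemann--Hurwitz the degree-two map $\pi_u$ has exactly four branch points, so its deck involution $\iota_u$ has four fixed points and, after moving the origin to one of them, is $z\mapsto -z$; its invariants are precisely the even functions, whence $u=\phi(z)$ for an even elliptic function $\phi$ of degree two (a M\"obius transform of the Weierstrass function $\wp$ of $\Lambda$). Now I invoke the symmetry: the transposition $\sigma\colon(u,v)\mapsto(v,u)$ maps the symmetric curve $E$ to itself and satisfies $\pi_u\circ\sigma=\pi_v$, hence induces a biregular automorphism $\tau$ of $\mathbb{C}/\Lambda$. For a generic modulus the only automorphisms are affine of the form $z\mapsto\pm z+c$, and $\sigma^2=\mathrm{id}$ forces $\tau$ into this shape; therefore
$$v=\pi_v=\pi_u\circ\sigma=\phi\bigl(\tau(z)\bigr)=\phi(\pm z+c)=\phi(z\pm c),$$
where the last equality uses that $\phi$ is even. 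This is the asserted parametrisation, the two sheets of $\pi_u$ over a fixed value of $u$ corresponding to $z$ and $-z$, hence to $v=\phi(z+c)$ and $v=\phi(z-c)$.

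The decisive --- and most delicate --- point is that the \emph{same} function $\phi$ serves both coordinates. A priori each projection only pins down its coordinate up to an independent M\"obius change; it is exactly the symmetry of (\ref{eq:biquadratic}) that upgrades this to $\pi_v=\pi_u\circ\sigma$ with $\sigma$ biregular, so that $v$ is literally $\phi$ precomposed with an automorphism and no spurious M\"obius factor intervenes. Making this rigorous requires $E$ to be smooth, so I would treat the degenerate loci (multiple roots of $\Delta$, or vanishing of the leading coefficients, where $E$ acquires a node or becomes rational) separately or recover them as limits of the elliptic case. As an equivalent and more computational route --- the one nearer to Euler and Baxter --- one may instead posit $u=\phi(z)$, $v=\phi(z+c)$ from the outset, eliminate $z$ by the addition theorem for $\wp$ to produce a symmetric biquadratic, and match parameters: the even degree-two function (a M\"obius transform of $\wp$) together with the lattice and the shift carry exactly as many essential parameters, after projective scaling and the internal rescaling of $z$, as the coefficients of (\ref{eq:biquadratic}), so the construction is onto a generic such correspondence.
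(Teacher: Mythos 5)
Your argument is essentially correct for the general (generic) case the theorem asserts: realising \eqref{eq:biquadratic} as a smooth bidegree-$(2,2)$ curve in $\mathbb{P}^1\times\mathbb{P}^1$ of genus one, uniformising it, recovering $u$ as an even order-two elliptic function from the deck involution of $\pi_u$ (four fixed points, hence $z\mapsto -z$ after centring), and then using $\pi_v=\pi_u\circ\sigma$ with $\sigma^2=\mathrm{id}$ to force the induced automorphism into the form $z\mapsto\pm z+c$ is a complete and clean proof; your observation that $\sigma^2=\mathrm{id}$ already rules out the exceptional automorphisms at $j=0,1728$ is the right way to avoid a genericity assumption on the modulus, and you correctly flag that degenerate loci (nodal or rational $E$) fall outside the statement. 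Be aware, though, that the paper itself offers no proof of this theorem: it is quoted as a classical result of Euler, with the remark that Baxter's book contains an ``effective proof'' by explicitly constructing the elliptic parametrisation. That effective route is precisely the computational alternative you sketch in your last sentences --- posit $u=\phi(z)$, $v=\phi(z+c)$ for a M\"obius transform $\phi$ of $\wp$ (or, in Baxter's normalisation, of Jacobi functions satisfying \eqref{eq:ellipticidentities}), eliminate $z$ via the addition theorem to obtain a symmetric biquadratic, and match the six coefficients against the available moduli. The trade-off is the usual one: your uniformisation argument explains conceptually \emph{why} the same $\phi$ serves both coordinates (the swap is a biregular involution, so no spurious M\"obius factor can appear), whereas the effective construction produces explicit formulae for $\phi$, $\kappa$ and $c$ in terms of $a,\dots,f$, which is what the paper actually needs downstream when it parametrises periodicity conditions by elliptic functions. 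If you pursue the effective route, the parameter count at the end should be made precise (surjectivity onto generic coefficient sets needs an argument, e.g.\ via the discriminant quartic determining the curve and the shift determined by a point of the correspondence), since as written it is only a plausibility check.
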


Elliptic functions and their addition formulae play a prominent role in the entire Baxter opus \cites{Bax71a,Bax71b,Bax72b,Bax72a} and in the theory of integrable systems in general. In his book in particular they appear as soon as in the second paragraph of the preface. For further references we list well-known identities for the Jacobi elliptic functions (see eg. \cite{AK90}):
\begin{equation}\label{eq:ellipticidentities}
\begin{aligned}
\kappa^2\sn^2z+\dn^2z&=1,
\\
\sn(z+w)&=\frac{\sn\, z\cn\, w\dn \,w + \sn\, w\cn\, z\dn\, z}{1-\kappa^2\sn^2 z \sn ^2 w},
\\
\sn (K-z)&=\frac{\cn\, z}{\dn\, z},
\\
K&=\int_0^1\frac{dt}{\sqrt{(1-t^2)(1-\kappa^2t^2)}}.
\end{aligned}
\end{equation}
Here $\kappa$ is a constant different from $0, 1$.
By using the above argumentation, Baxter managed to get his
celebrated $R$--matrix, which is also known as
$XYZ$ $R$--matrix and the Eight Vertex Model $R$--matrix because of its
fundamental role in both of these very important models of quantum
and statistical mechanics, respectively. 

Symmetric biquadratic relations \eqref{eq:biquadratic} also play an important role in the Poncelet theorem and related questions of integrable billiards within conics. Let us start with the situation of the Poncelet theorem. Suppose
conics $\Gamma$ and $\mathcal{K}$ are given. Consider the $2-2$-correspondence on $\Gamma$ induced by $\mathcal{K}$ in the following
way. To a point $M\in\Gamma$, one can correspond points $M_1$ and $M_1^\prime$ on $\Gamma$, such
that the lines $L_{MM_1}$ and $L_{MM_1^\prime}$ are tangent to the conic
$\mathcal{K}$. In this way, a symmetric  $2-2$-correspondence is
defined. Moreover, every symmetric $2-2$-correspondence on a conic
is defined in this way. The Italian mathematician Trudi,  around 1853, studied  the Poncelet theorem  in terms of compositions of such symmetric $2-2$-relations and provided another proof in  \cites{Trud1853,Trud1863}.

Let us recall that the addition formulae for the Jacobi elliptic functions were used by Jacobi himself in his proof of the Poncelet theorem for circles. More about symmetric $2-2$-relations and their role in integrable systems can be found in \cites{V1991,V1992,Duist2010, DR2011}.

In the present paper we study a new instance of symmetric $2-2$-relations which appears in integrable billiard dynamics in the Minkowski space on a hyperboloid of one sheet.
Such billiards were recently introduced in \cite{GaR}, where
Poncelet-type theorems and corresponding analytic conditions for periodicity were derived.
In this work, we study the periodicity conditions for the dynamics in accordance with the general ideology from \cite{DR2019b}: we relate them to the extremal polynomials on the unions of two intervals. 
As it is known from classics, \cites{Zolotarev1877,AK90}, such polynomials are parametrized by elliptic functions. The identities and addition formulas for elliptic functions, like \eqref{eq:ellipticidentities}, will play significant role in parametrizing the periodic trajectories of this dynamical system.

This paper is organized as follows.
In Section \ref{HyperboloidIntro}, we provide necessary review of the confocal families and billiards on the one-sheeted hyperboloid in the Minkowski space. We conclude Section \ref{HyperboloidIntro} with new results on elliptic periodicity of such billiards, which we collect in Section \ref{sec:EllipticPeriodic}.
In Section \ref{TopProperties}, we describe the topological properties of the integrable billiards from the previous section in terms of Fomenko graphs. 
In Section \ref{sec:discriminantly-poly}, we show through examples that the analytic conditions for closed billiard trajectories lead to discriminantly factorizable polynomials, a generalized form of discriminantly separable polynomials.
In Section \ref{sec:extremal}, we conclude the paper by establishing a connection of those conditions with generalized extremal polynomials on two intervals, so-called Zolotarev polynomials, and deduce properties of corresponding rotation numbers.

\section{Confocal conics and billiards on the hyperboloid of one sheet}
\label{HyperboloidIntro}
In this section, we present main notions are results regarding confocal families of conics and corresponding billiards on the one-sheeted hyperboloid in the Minkowski space.

The \emph{three-dimensional Minkowski space} $\Mink$ is the real 3-dimensional vector space $\R{3}$ with the symmetric nondegenerate bilinear form
\begin{equation}\label{eq:form}
\ip{v}{w} = - x_v x_w + y_v y_w  +  z_v z_w.
\end{equation}

On the hyperboloid of one sheet
\begin{equation*}
\Hyp\ :\ -x^2 + y^2 + z^2 =1
\end{equation*}
in $\Mink$, the metric $ds^2 = -dx^2 + dy^2 + dz^2$ is a Lorentz metric of constant curvature.
Geodesics of this metric are the intersections of $\Hyp$ and planes containing the origin. 
We call these geodesics \emph{space-}, \emph{time-}, or \emph{light-like}, if their tangent vectors $v$ are such, i.e.~if $\ip{v}{v}$ is respectively positive, negative, or zero.
Note that the light-like geodesics on $\Hyp$ are exactly its generatrices.

\subsection{Conics on the hyperboloid}

A conic on $\Hyp$ is defined as the intersection of $\Hyp$ with the following cone:
\begin{equation}\label{eq:coneA}
-\frac{x^2}a+\frac{y^2}b+\frac{z^2}c=0.
\end{equation}
We will assume that the cone is not symmetric, i.e.~$b\neq c$.
Moreover, without loss of generality, we can then assume $b<c$.
Its intersection with $\Hyp$ bounds a compact domain on $\Hyp$ if and only if all generatrices of the cone are space-like, see \cite{GaR}.
That will happen exactly in one of the following two cases:
\begin{itemize}
	\item if $0<a<b<c$ then the cone \eqref{eq:coneA} divides $\Hyp$ into one compact domain and two unbounded domains.
	The conic is called \emph{collared $\Hyp$-ellipse} and consists of two components which are symmetric to each other with respect to the coordinate $yz$-plane, see the left side of Figure \ref{TwoCases};
	\item if $b<0<a<c$ then the cone divides $\Hyp$ into two compact domains and one unbounded domain. 
	The conic is called \emph{transverse $\Hyp$-ellipse} and consists of two components which are symmetric to each other with respect to the coordinate $xy$-plane, see the right side of Figure \ref{TwoCases}. 
\end{itemize}
\begin{figure}[h]
	\centering
	\begin{tabular}{c c}
	 \includegraphics[width=0.43\textwidth]{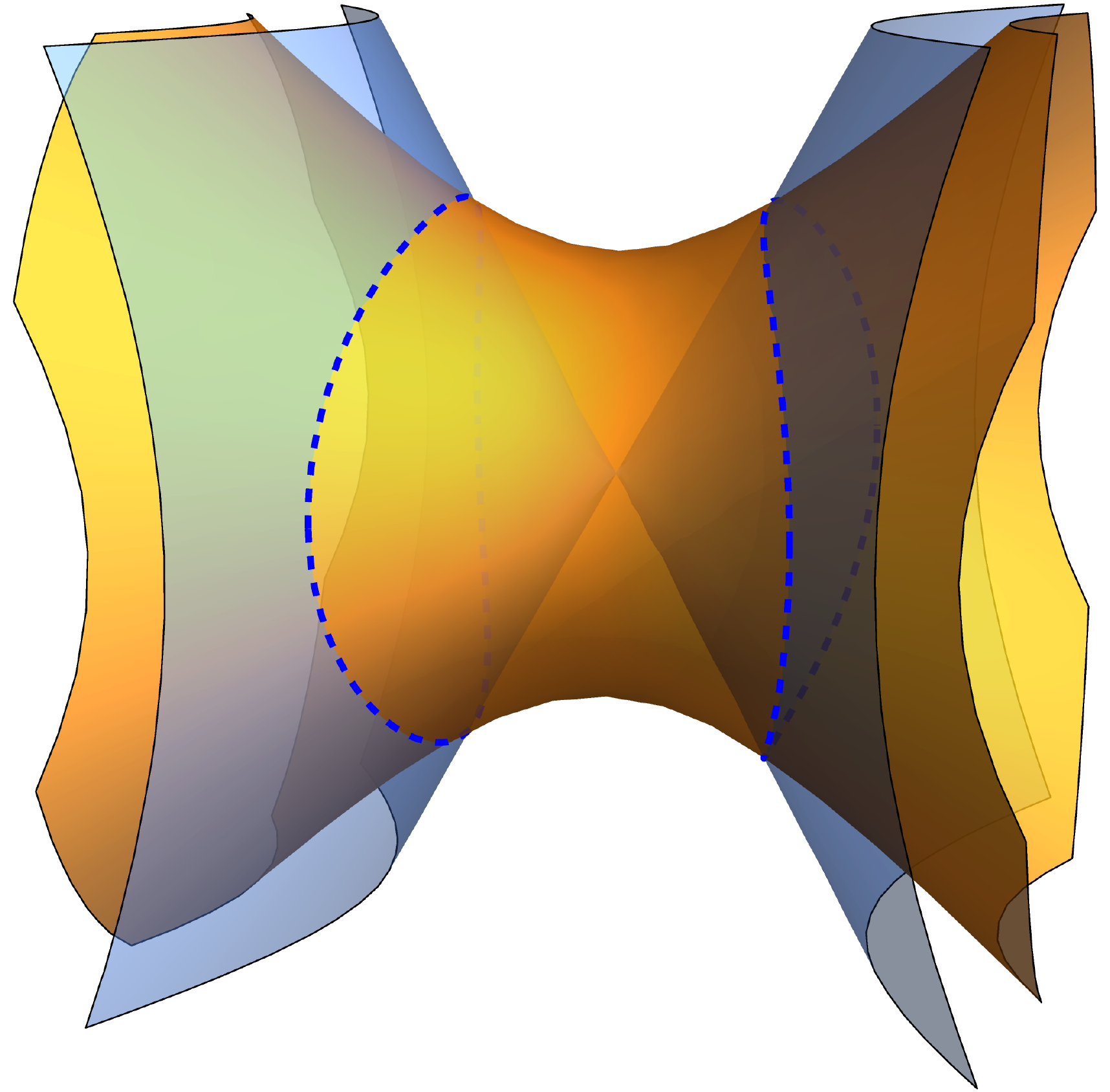} & \includegraphics[width=0.43\textwidth]{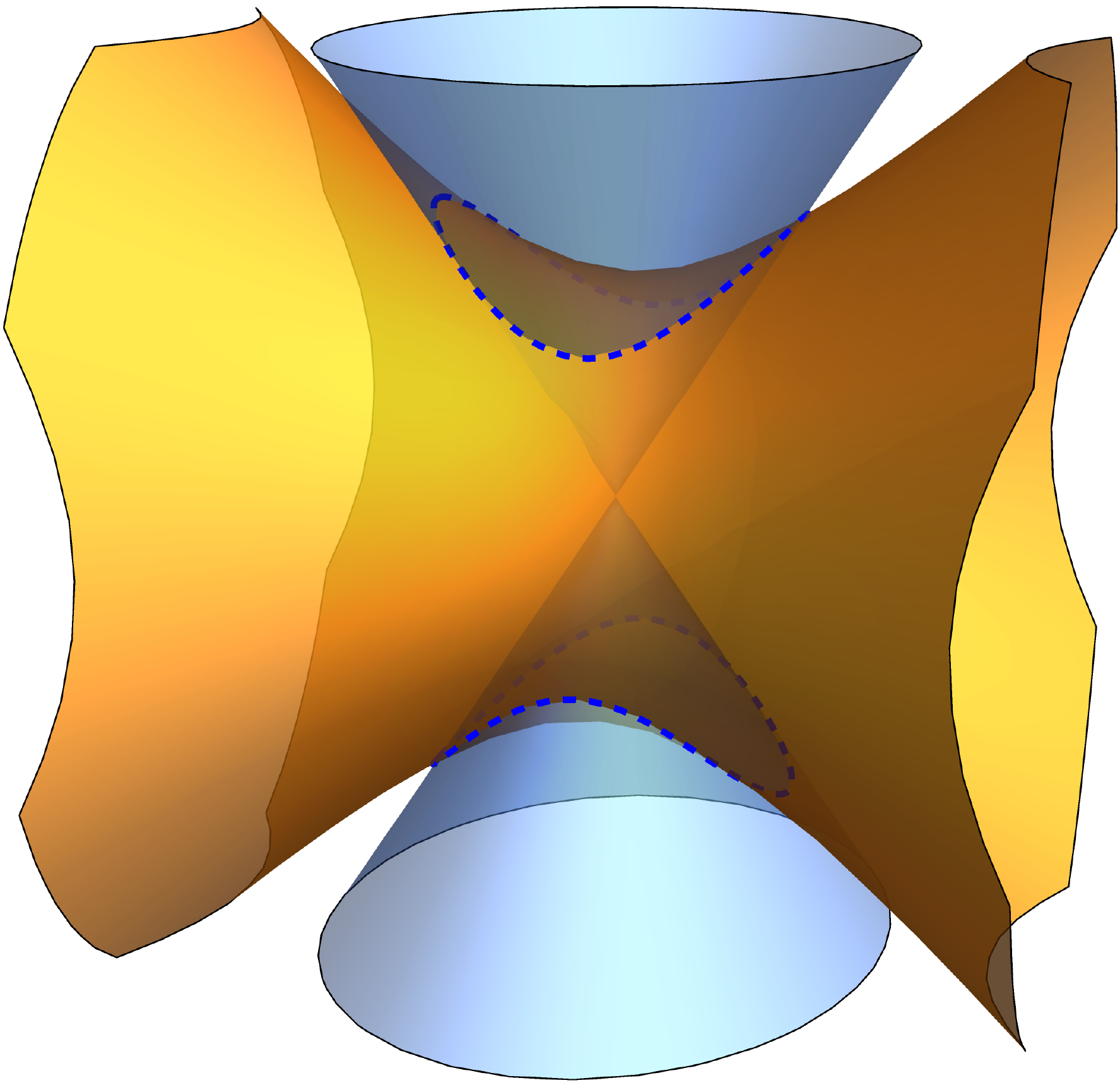}
	\end{tabular}
	\caption{Two geometric possibilities for the intersection of the cone and $\Hyp$ determining a compact domain: the collared (on the left) and transverse $\Hyp$-ellipse (on the right). }
	\label{TwoCases}
\end{figure}

\subsection{Confocal families}\label{sec:confocal}

The family of conics which are confocal to the conic given by \eqref{eq:coneA} on $\Hyp$ is given by: 
\begin{equation}
\mathcal{C}_{\lambda}
:
-\frac{x^2}{a-\lambda} + \frac{y^2}{b-\lambda} + \frac{z^2}{c-\lambda}=0.
\label{ConfFam1} 
\end{equation}
Note that \eqref{eq:coneA} is denoted by $\mathcal{C}_0$ here.
We describe the confocal family in more detail and provide illustrations in Figure \ref{fig:ConfocalCurves}. 

If $\mathcal{C}_0$ is a collared $\Hyp$-ellipse, i.e.~$0<a<b<c$, then the confocal family \eqref{ConfFam1} consists of two types of conics:
\begin{itemize}
\item Collared $\Hyp$-ellipses, corresponding to parameters $\lambda<a$. We note that those conics are space-like.
\item The conics of hyperbolic type, with $b<\lambda<c$. Such a conic always consists of $4$ components, symmetric to each other with respect to the coordinate $xy$- and $xz$-planes.
We note that those conics are time-like.
\end{itemize}
The family also contains the following degenerate conics:
\begin{itemize}
	\item The circle $\mathcal{C}_a$, which is the intersection of the hyperboloid with the coordinate $yz$-plane. The circle is space-like.
	\item The hyperbolae $\mathcal{C}_b$ and $\mathcal{C}_c$, which are respectively the intersections of the coordinate $xz$- and $xy$- planes  with $\Hyp$.
	Those curves are time-like.
\end{itemize}

If $\mathcal{C}_0$ is a transverse $\Hyp$-ellipse, i.e.~$b<0<a<c$, then the confocal family \eqref{ConfFam1} consists of four subfamilies of conics:
\begin{itemize}
	\item Transverse $\Hyp$-ellipses corresponding to parameters $b<\lambda<a$. $\mathcal{C}_0$ belongs here. All conics here consist of two closed connected components, which are symmetric to each other with respect to the coordinate $xy$-plane.
	Each component consists of two space-like and two time-like arcs.
	
	\item Transverse $\Hyp$-ellipses corresponding to parameters  $a<\lambda<c$. All conics here consist of two closed connected components, which are symmetric to each other with respect to the coordinate $xz$-plane. Each component here also consists of two space-like and two time-like arcs.

	\item The conics of hyperbolic type, with $\lambda>c$. Such a conic always consists of $4$ unbounded connected components, symmetric to each other with respect to the coordinate $xz$- and $yz$-planes. Each connected component consists of one bounded space-like arc and two unbounded time-like ones.

	\item The conics of hyperbolic type, with $\lambda<b$. As in the previous case such a conic always consists of $4$ unbounded connected components, but here they are symmetric to each other with respect to the coordinate $xy$- and $yz$-planes. Each connected component consists of one bounded space-like arc and two unbounded time-like ones.
\end{itemize}
We note that the conics from this confocal family have joint tangent lines, which are generatrices of $\Hyp$ touching $\mathcal{C}_0$.
There are $8$ such generatrices and their intersection points are the foci of the confocal family:
\begin{itemize}
	\item four foci in the coordinate $xy$-plane, denoted:
	$F_{\pm\pm}^z = \left(
	\pm\sqrt{\dfrac{c-a}{a-b}},
	\pm\sqrt{\dfrac{c-b}{a-b}},
	0
	\right)$;
	\item four foci in the coordinate $xz$-plane, denoted:
	$F_{\pm\pm}^y = \left(
	\pm\sqrt{\dfrac{a-b}{c-a}},
	0,
	\pm\sqrt{\dfrac{c-b}{c-a}}
	\right)$;
	\item four foci in the coordinate $yz$-plane, denoted:
	$F_{\pm\pm}^x = \left(
	0,
	\pm\sqrt{\dfrac{a-b}{c-b}},
	\pm\sqrt{\dfrac{c-a}{c-b}}
	\right)$.
\end{itemize}
Those $8$ generatrices divide $\Hyp$ into $20$ domains.
Twelve of those domains contain conics from the confocal family, while eight remaining domains contain none of the conics.

\begin{figure}
    \centering
    	\begin{tabular}{c c}	 \includegraphics[width=0.45\textwidth]{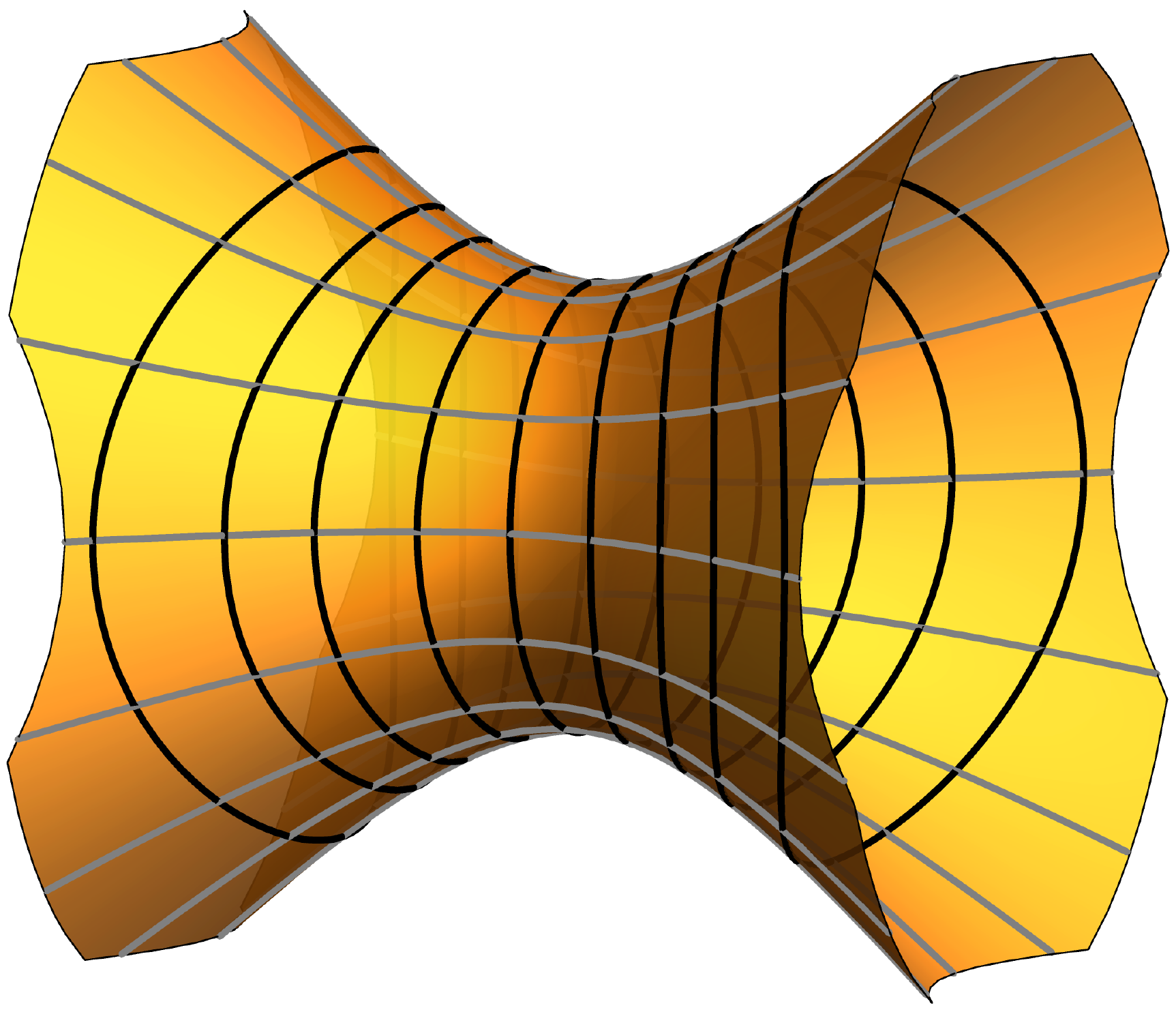} & \includegraphics[width=0.40\textwidth]{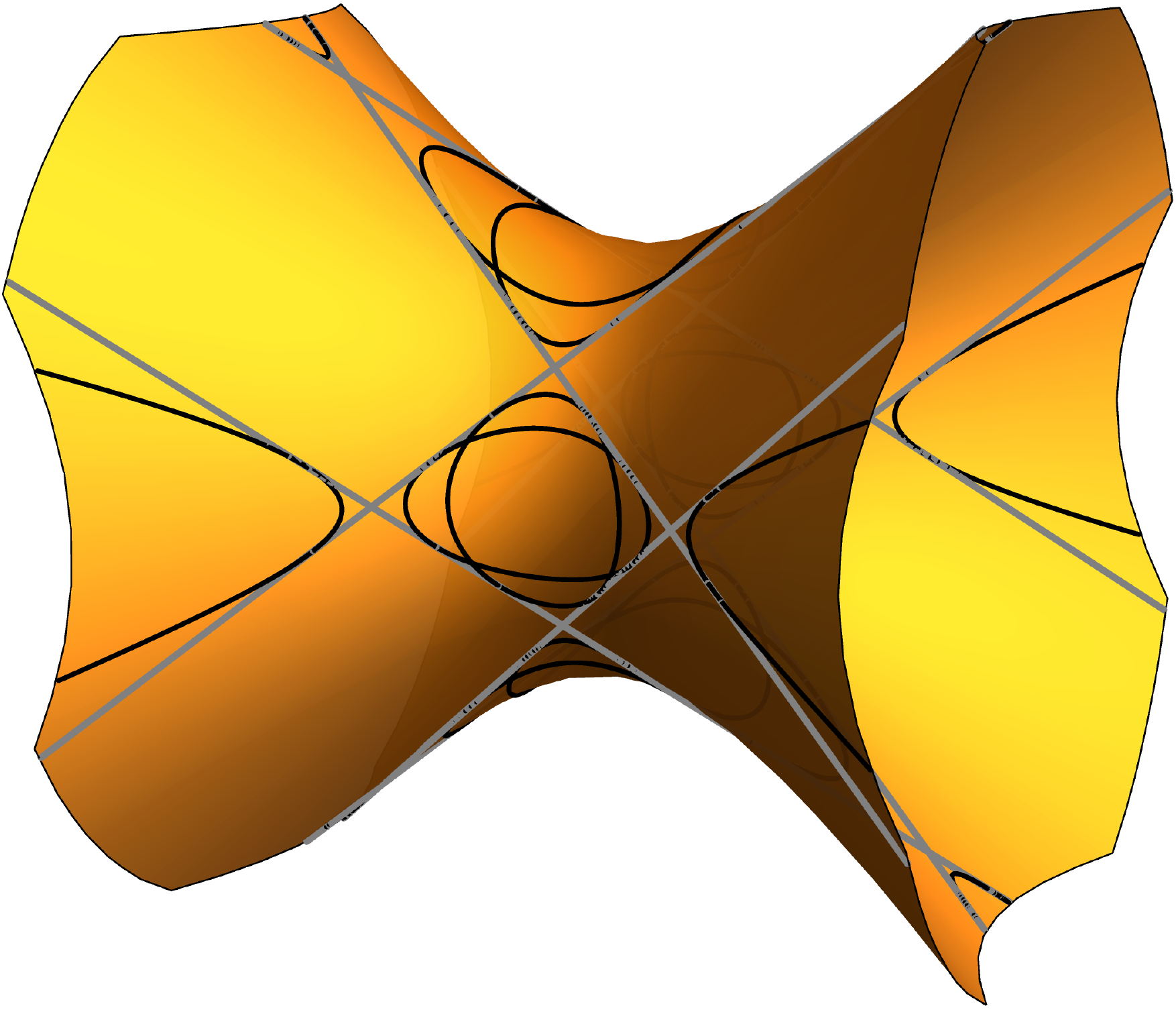}
	\end{tabular}
    \caption{Families of confocal curves in the collared (left) and transverse (right) $\Hyp$-ellipses. }
    \label{fig:ConfocalCurves}
\end{figure}

The family also contains degenerate conics $\mathcal{C}_a$, $\mathcal{C}_b$, $\mathcal{C}_c$, which are contained in the corresponding coordinate planes, and $\mathcal{C}_{\infty}$, which is placed at the infinite plane and can be regarded as the intersection of $\Hyp$ with the cone $-x^2+y^2+z^2=0$.

For each point $(x,y,z) \in \Hyp$, the equation \eqref{ConfFam1} has two solutions in $\lambda$ which we call the \emph{generalized Jacobi coordinates} or \emph{elliptic coordinates} of that point.
If $\mathcal{C}_0$ is a collared $\Hyp$-ellipse, those solutions are real and distinct, one belonging to $(-\infty,a]$, and the other to $[b,c]$, meaning that each point on $\Hyp$ is the intersection point of one collared $\Hyp$-ellipse and a confocal conic of hyperbolic type.
The generalized Jacobi coordinates of any point within $\mathcal{C}_0$ satisfy $0 < \lambda_1 \leq a$, $b \leq \lambda_2 \leq c$.

If $\mathcal{C}_0$ is a transverse $\Hyp$-ellipse, the equation will have two distinct real solutions within $12$ domains bounded by joint light-like tangent lines, one double real solution on those lines, and no real solutions within the remaining $8$ domains.
The generalized Jacobi coordinates of any point within $\mathcal{C}_0$ satisfy $b \leq \lambda_1 < 0 <\lambda_2 \leq a$.

\subsection{Billiards and periodic trajectories}
\label{BonH}

On $\Hyp$, we define the billiard motion as geodesic flow until the trajectory meets the boundary ellipse, then satisfying the billiard reflection law on it, using the bilinear form \eqref{eq:form} and the normal vector to the boundary at the point of reflection.
Note that the normal vector is not defined at those points of the boundary where the tangent line is light-like, so generally speaking, the reflection cannot be defined there.
On the other hand, as shown in \cite{KT}, when such a situation occurs on a conic, one can continuously extend the billiard flow to such points, defining the reflection there as returning along the same segment in the opposite direction.
For detailed discussion of the billiard reflection law in the pseudo-Euclidean setting, see \cites{KT,DR2012, DR2013}.

In \cites{V,MV} a method is proposed to determine the integrability of a discrete dynamical system by reducing the problem to the factorization of matrix polynomials. Specific applications include billiards in an ellipsoid in Euclidean and Minkowski spaces.
As shown in \cite{GaR}, the technique extends to $\Hyp$. 
The geometric manifestation of integrability can be seen
through the existence of caustics.
Namely, all segments of a given billiard trajectory within an $\Hyp$-ellipse are tangent to the same conic confocal with the boundary.
Since both the geodesic flow and the reflection preserve the type of the vector, we have that each billiard trajectory will be space-like, time-like, or light-like.
Moreover, that type will be the same for all trajectories sharing the same caustic, see Remarks \ref{rem:type} and \ref{rem:type2} of the present paper.

In general billiard problems, the study of periodic orbits and their geometric properties is of considerable interest. 
For integrable cases, the works \cites{GH, DR1998a, DR1998b, DR2011, DR2012, DR2019b, ADR2019} among many others characterize periodic trajectories in terms of an underlying elliptic curve and prove versions of a Poncelet-type theorem. 
Such a theorem will also hold in this paper's setting:
given a periodic billiard trajectory in an $\Hyp$-ellipse, any billiard trajectory which shares the same caustic is also periodic with the same period. 

The work of Cayley (see e.g. \cites{C1, C2}) in the $19^{th}$ century  on the Poncelet Theorem provided analytic conditions relating the period of a billiard trajectory to its caustics. A modern account of these results can be found the paper of Griffiths and Harris \cite{GH} from the 1970's.
Such conditions for a generalized Poncelet theorem within an ellipsoid in $d$-dimensional Euclidean spaces,
the Lobachevsky space, and pseudo-Euclidean spaces were derived in the last few decades, see \cites{DR1998a,DR1998b,DJR, DR2012}.

In this paper, we will use recently obtained Cayley-type conditions for elliptic billiards on $\Hyp$, which were derived using a divisor shift on the elliptic curve
\begin{equation}
Y^2 = \ep(X-a)(X-b)(X-c)(X-\nu) 
\label{eq:ellipticcurve}
\end{equation}
where $\ep = \sign(b\nu)$. 

\begin{theorem}[\cite{GaR}]\label{th:cayley}
A space-like or time-like billiard trajectory in the $\Hyp$-ellipse \eqref{eq:coneA} with the caustic $\mathcal{C}_{\nu}$ from the family \eqref{ConfFam1} is $n$-periodic if and only if
	\begin{equation*}
	\det\fourbyfour{B_{3}}{B_{4}}{B_{m+1}}{B_{4}}{B_{5}}{B_{m+2}}{B_{m+1}}{B_{m+2}}{B_{2m-1}}=0
\quad\text{and}\quad
n=2m \geq 4,
	\end{equation*}
or
	\begin{equation*}
	\det \fourbyfour{D_{2}}{D_{3}}{D_{m+1}}{D_{3}}{D_{4}}{D_{m+2}}{D_{m+1}}{D_{m+2}}{D_{2m}}=0
\quad\text{and}\quad
n=2m+1\ge3,
	\end{equation*}
where
	$$\sqrt{\ep(X-a)(X-b)(X-c)(X-\nu)} = B_0 + B_1X + B_2 X^2 + \cdots$$
	and 
	$$\sqrt{\frac{\ep(X-a)(X-b)(X-c)}{X-\nu}} = D_0 + D_1X + D_2 X^2 + \cdots$$
	are the Taylor expansions around $X=0$ and $\varepsilon=\sign(b\nu)$.
Furthermore, the only $2$-periodic trajectories are contained in the planes of symmetry. 
	
A light-like billiard trajectory in the  $\Hyp$-ellipse is $n$-periodic if and only if $n=2m \geq 4$ and
	\begin{equation*}
	\det\fourbyfour{E_{3}}{E_{4}}{E_{m+1}}{E_{4}}{E_{5}}{E_{m+2}}{E_{m+1}}{E_{m+2}}{E_{2m-1}}=0,
	\end{equation*}
	where
	$$\sqrt{\delta(X-a)(X-b)(X-c)} = E_0 + E_1X + E_2 X^2 + \cdots$$
	is the Taylor expansion around $X=0$ and $\delta = \sign(b)$. 
	
	\label{CayleyThm}
\end{theorem}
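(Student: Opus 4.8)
The plan is to follow the classical Cayley--Griffiths--Harris strategy, adapted to the elliptic billiard on $\Hyp$ exactly as in the confocal framework of \cites{DR2011,DR2019b,GaR}. First I would recall from the integrability result that all segments of a trajectory with caustic $\mathcal{C}_{\nu}$ are tangent to $\mathcal{C}_{\nu}$, so that the pair of elliptic coordinates of the reflection points, together with $\nu$, lies on the curve \eqref{eq:ellipticcurve}. Passing to the Abel map of this elliptic curve, the segment-to-segment billiard map linearizes: it becomes translation by a fixed class $Q\in\Jac(\mathcal{C})$ determined by the boundary $\mathcal{C}_0$ and the caustic $\mathcal{C}_{\nu}$. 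Concretely, $Q$ is the class of a difference of points lying over $X=0$ (the boundary value $\lambda=0$, which is why every expansion below is taken at $X=0$) and over $X=\nu$, taken modulo the hyperelliptic involution.

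The trajectory is then $n$-periodic precisely when $Q$ is an $n$-torsion point, $nQ=0$. The crucial bookkeeping is that consecutive reflection points alternate between the two branches of the elliptic coordinate system (one in $(-\infty,a]$, the other in $[b,c]$), so the closure condition after an \emph{even} and after an \emph{odd} number of steps correspond to two genuinely different divisor classes. This is the source of the two separate conditions: the even case $n=2m$ is governed by the full quartic through the expansion $\sqrt{\varepsilon(X-a)(X-b)(X-c)(X-\nu)}=\sum B_kX^k$, whereas the odd case $n=2m+1$ is governed by the companion expansion $\sqrt{\varepsilon(X-a)(X-b)(X-c)/(X-\nu)}=\sum D_kX^k$, in which the point over $X=\nu$ has effectively been displaced by half the period.

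Next I would convert each torsion condition into a function-existence statement via Abel's theorem: $nQ=0$ is equivalent to the existence of a meromorphic function on $\mathcal{C}$ whose divisor is the prescribed multiple of the defining difference. On the model $Y^2=\varepsilon(X-a)(X-b)(X-c)(X-\nu)$ such a function has the form $p(X)+q(X)Y$, and prescribing its divisor forces $p(X)-Y$ (respectively the cubic analogue producing the $D_k$) to vanish to a fixed order at $X=0$. Writing out this vanishing as a homogeneous linear system in the coefficients of $p$ yields exactly the Hankel matrices in the $B_k$ and $D_k$; the order of vanishing dictated by $n=2m$ or $n=2m+1$ pins down both the starting index and the size, giving the $(m-1)\times(m-1)$ determinant beginning at $B_3$ and the $m\times m$ determinant beginning at $D_2$. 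Nontrivial solvability of the system is precisely the vanishing of the displayed determinant.

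For light-like trajectories the caustic degenerates to $\mathcal{C}_{\infty}$, i.e.\ $\nu\to\infty$, so the quartic degenerates to $\delta(X-a)(X-b)(X-c)$ with $\delta=\sign(b)$ and $Q$ degenerates accordingly; repeating the argument with $\sqrt{\delta(X-a)(X-b)(X-c)}=\sum E_kX^k$ gives the $E_k$-determinant, while the collapse of the two points at infinity into one explains why only even periods $n=2m\ge4$ survive. The $2$-periodic statement I would verify directly: a back-and-forth segment must meet the boundary orthogonally at both endpoints, and computing the normal with respect to \eqref{eq:form} shows this forces the segment into a coordinate plane of symmetry. The main obstacle is the parity bookkeeping of the second step, namely choosing the base points of the Abel map and tracking how the hyperelliptic involution interacts with the alternation of the two elliptic-coordinate branches across a single reflection, so that the even and odd closure conditions land on the correctly indexed Hankel determinants. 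The Minkowski signature compounds this, since the reflection law and the space/time/light trichotomy must be handled with the indefinite form \eqref{eq:form}, and the light-like case must be justified as a genuine limit $\nu\to\infty$ of the analytic condition rather than an ad hoc computation.
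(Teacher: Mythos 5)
The paper does not actually prove this theorem---it is quoted from \cite{GaR}---but your outline reproduces precisely the divisor-shift/Cayley strategy that the paper attributes to \cite{GaR} and itself reuses for the elliptic-periodicity analogues: linearization on the Jacobian of \eqref{eq:ellipticcurve}, reduction of periodicity to a linear-equivalence (torsion) condition, Abel's theorem converting this to the existence of a function in a Riemann--Roch space with basis of the form $(y-B_0-B_1x-\cdots-B_kx^k)/x^{k-1}$, and the resulting Hankel determinant. Your sketch is sound and essentially identical in approach; the only caveat is the one you already flag, namely that the even/odd split and the degenerate light-like case require the careful bookkeeping of which branch points ($P_a$, $P_b$, $P_c$, $P_\nu$) enter the divisor class, exactly as carried out in the paper's proofs of its elliptic-periodicity theorems.
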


For more details, we refer to \cite{GaR}. Let us also observe a recent paper \cite{VW2020} devoted to geodesics on a hyperboloid in Euclidean space.

\subsection{Elliptic Periodic Trajectories}\label{sec:EllipticPeriodic}

The points in $\Mink$ that are symmetric about the coordinate planes have the same elliptic coordinates, hence there are eight points in $\Mink$ with elliptic coordinates $\lambda_1, \lambda_2$. 
Because of that symmetry, each billiard trajectory which is $n$-periodic in elliptic coordinates will also be periodic in the Cartesian coordinate system, but its period can be $n$ or $2n$. We define this scenario as follows. 

\begin{definition}
A billiard trajectory is $n$-\emph{elliptic periodic} if it is $n$-periodic in the elliptic coordinates corresponding to the confocal family (\ref{ConfFam1}). 
\end{definition}

Trajectories which directly connect two points with the same Jacobi elliptic coordinates will be 1-elliptic periodic. We thus consider $n$-elliptic periodicity for $n \geq 2$. Next, we derive algebro-geometric conditions for elliptic periodic billiard trajectories on $\Hyp$.  

\begin{theorem}
A billiard trajectory within the collared $\Hyp$-ellipse with caustic curve $\mathcal{C}_\nu$ is $n$-elliptic periodic and not $n$-periodic if and only if one of the following conditions is satisfied on the elliptic curve (\ref{eq:ellipticcurve}), where $Q_{\pm}$ are two points over $X=0$ and $P_\beta$ is a point over $X=\beta$: 
\begin{itemize}
\item $n=2m$ and 
	\begin{enumerate}[(i)]
	\item $\nu \in (-\infty,0) \cup (b,c) \cup (c,\infty)$ and $m(Q_- - Q_+) \sim 0$; 
	\item $\nu \in (-\infty, 0) \cup (c,\infty)$ and $(m+1)Q_- - (m-1)Q_+ - P_a - P_\nu \sim 0$; 
	\item $\nu \in (b, c)$ and $(m+1)Q_- - (m-1)Q_+ - P_a - P_c \sim 0$; 
	\end{enumerate} 
\item $n=2m+1$ and 
	\begin{enumerate}[(i)] \setcounter{enumi}{3}
	\item $\nu \in (-\infty,0) \cup (b,c) \cup (c,\infty)$ and $(m+1)Q_- - mQ_+ -P_a \sim 0$; 
	\item $\nu \in (-\infty, 0) \cup (c,\infty)$ and $(m+1)Q_- - mQ_+ - P_\nu \sim 0$; 
	\item $\nu \in (b, c)$ and $(m+1)Q_- - mQ_+ - P_c \sim 0$. 
	\end{enumerate}
\end{itemize}

A billiard trajectory within the transverse $\Hyp$-ellipse with caustic curve $\mathcal{C}_\nu$ is $n$-elliptic periodic and not $n$-periodic if and only if one of the following conditions is satisfied on the elliptic curve (\ref{eq:ellipticcurve}): 
\begin{itemize}
\item $n=2m$ and 
	\begin{enumerate}[(i)]\setcounter{enumi}{6}
	\item $\nu \in \R{}\setminus\{b,0,a,c\}$ 
	and $m(Q_- - Q_+) \sim 0$; 
	\item $\nu \in (-\infty, b) \cup (a, c) \cup (c,\infty)$ and $(m+1)Q_+ - (m-1)Q_- - P_a - P_b \sim 0$; 
	\item $\nu \in (b, 0)$ and $(m+1)Q_+ - (m-1)Q_- - P_a - P_\nu \sim 0$; 
	\item $\nu \in (0, a)$ and $(m+1)Q_+ - (m-1)Q_- - P_b - P_\nu \sim 0$; 
	\end{enumerate} 
\item $n=2m+1$ and 
	\begin{enumerate}[(i)] \setcounter{enumi}{10}
	\item $\nu \in (-\infty,b) \cup (b,0) \cup (a, c) \cup (c,\infty)$ and $(m+1)Q_+ - mQ_- -P_a \sim 0$; 
	\item $\nu \in (-\infty,b) \cup (0,a) \cup (a, c) \cup (c,\infty)$ and $(m+1)Q_+ - mQ_- -P_b \sim 0$. 
	\end{enumerate}
\end{itemize}
\end{theorem}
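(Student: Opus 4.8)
The plan is to linearize the billiard on the Jacobian of the elliptic curve (\ref{eq:ellipticcurve}), henceforth $\Gamma$, and read off $n$-elliptic periodicity as a translation condition, in the same spirit as the Cayley-type Theorem \ref{th:cayley} characterizes honest periodicity. Concretely, I would lift each billiard segment to a point of $\Gamma$: its contact with the caustic $\mathcal{C}_\nu$ lies over $X=\nu$ (the point $P_\nu$) and its two intersections with the boundary $\mathcal{C}_0$ lie over $X=0$ (the points $Q_\pm$). The reflection law then becomes a fixed shift on $\Jac(\Gamma)$, and I would show that one reflection advances the Abel image by a class $\mathcal{S}=[Q_--P_\beta]$, where $\beta$ is the Weierstrass point bounding the oscillating coordinate; since $2P_\beta\sim Q_++Q_-$, this $\mathcal{S}$ is a square root of $[Q_--Q_+]$. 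This linearization is already the engine behind Theorem \ref{th:cayley}, so I would quote it and only pin down the precise class $\mathcal{S}$ in the present elliptic-coordinate normalization.

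The genuinely new ingredient is separating \emph{elliptic} periodicity from honest periodicity. Because the $\Hyp$-ellipse $\mathcal{C}_0$ has two components, and the three coordinate-plane reflections preserve elliptic coordinates while permuting the eight symmetric copies of a point, a trajectory that returns in elliptic coordinates after $n$ reflections satisfies $n\mathcal{S}\sim\tau$, where $\tau$ is the $2$-torsion class of $\Jac(\Gamma)$ induced by the symmetry carrying the endpoint back to the start; honest $n$-periodicity is the strict sub-case in which the realizing symmetry is the identity. I would make the dictionary from coordinate symmetries to $2$-torsion explicit, recording that the nonzero classes are exactly the Weierstrass differences $[P_a-P_b]=[P_c-P_\nu]$, $[P_a-P_c]=[P_b-P_\nu]$, $[P_a-P_\nu]=[P_b-P_c]$. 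The essential point is that the component-swap $x\mapsto-x$ can induce $\tau=0$ on the Jacobian yet still move the actual point, so that orbits with $n\mathcal{S}\sim0$ remain merely $n$-elliptic and not $n$-periodic; this is exactly what conditions (i) and (iv) (and (vii), (xi) in the transverse case) record, while the truly trivial-symmetry orbits are the periodic ones of Theorem \ref{th:cayley} and are excluded. The factor two between $n=2m$ and the multiplicities $m$ in the statement comes from $\mathcal{S}$ being this square root.

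With $\mathcal{S}$ fixed I would compute $n\mathcal{S}$ in both parities: for $n=2m$ one gets $n\mathcal{S}\sim m[Q_--Q_+]$, and for $n=2m+1$ one gets $n\mathcal{S}\sim m[Q_--Q_+]+\mathcal{S}$, the extra half-shift producing the lone Weierstrass point appearing in conditions (iv)--(vi) and (xi)--(xii). Imposing $n\mathcal{S}\sim\tau$ for each admissible $\tau$ and simplifying with $Q_++Q_-\sim 2P_a$ then reproduces the displayed divisor relations; for instance $m[Q_--Q_+]\sim[P_\nu-P_a]$ rewrites as $(m+1)Q_--(m-1)Q_+-P_a-P_\nu\sim0$. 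The enumeration over the intervals of $\nu$ is forced by the real structure of $\Gamma$: the ordering of the branch points $a,b,c,\nu$ on the real axis, equivalently the sign $\varepsilon=\sign(b\nu)$ and which conic of the family (\ref{ConfFam1}) the caustic is, fixes which coordinate is bounded by which Weierstrass point, hence which $\beta$ enters $\mathcal{S}$ and which $2$-torsion classes $\tau$ are geometrically realized. This is precisely what selects two conditions per interval and what distinguishes the collared from the transverse case.

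The main obstacle is this symmetry bookkeeping. Determining the correct branch $Q_+$ versus $Q_-$, the correct bounding Weierstrass point $\beta$ for each oscillation, and the exact $2$-torsion class attached to each component-swap as $\nu$ crosses $0,a,b,c$ is delicate, and it is exactly this data that must be tracked to obtain the stated ranges of $\nu$ and to certify the ``not $n$-periodic'' clause rather than accidentally re-including the genuinely $n$-periodic orbits of Theorem \ref{th:cayley}. Once the shift $\mathcal{S}$ and the symmetry-to-torsion dictionary are settled, the remaining reduction of $n\mathcal{S}\sim\tau$ to the displayed divisor equalities is routine linear algebra on $\Jac(\Gamma)$.
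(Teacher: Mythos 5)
Your proposal is correct and follows essentially the same route as the paper: the paper integrates the invariant differential $\frac{d\lambda_1}{\sqrt{\mathcal{P}(\lambda_1)}}+\frac{d\lambda_2}{\sqrt{\mathcal{P}(\lambda_2)}}=0$ between consecutive points with equal elliptic coordinates, obtaining relations such as $m_0(P_a-P_0)+m_1(P_c-P_b)\sim 0$ (and transverse analogues with $n=m_{2i}+m_{2i+1}$) whose parity analysis in the $m_i$ is exactly your symmetry-to-torsion dictionary, and then applies the same identities $2P_\alpha\sim Q_++Q_-$ and $P_a+P_b+P_c+P_\nu\sim 2(Q_++Q_-)$ to reach the displayed divisor conditions. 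The bookkeeping you defer --- which Weierstrass point bounds each oscillation and which torsion class each interval of $\nu$ realizes --- is precisely the part the paper also treats schematically, working out case (ii) in full and asserting that the remaining cases follow by the same argument.
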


\begin{proof}
Let $\mathcal{P}(x) = \ep (x-a)(x-b)(x-c)(x-\nu)$ with $\ep = \sign(b\nu)$ and consider the differential equation 
\begin{equation}
\frac{d\lambda_1}{\sqrt{\mathcal{P}(\lambda_1)}} + \frac{d\lambda_2}{\sqrt{\mathcal{P}(\lambda_2)}} = 0
\label{eq:diffeq}
\end{equation}
along a given billiard trajectory. If $p_0$ is the initial point of an $n$-elliptic periodic trajectory and $p_1$ is the next point along the trajectory with the same elliptic coordinates as $p_0$, integrating the differential equation (\ref{eq:diffeq}) from $p_0$ to $p_1$ results in 
\begin{align*}
m_0(P_a-P_0) + m_1(P_c-P_b) &\sim 0, \text{ or } \\
m_2(P_\nu-P_0) + m_3(P_c-P_b) &\sim 0, \text{ or } \\
m_4(P_a-P_0) + m_5(P_\nu-P_b) &\sim 0 
\end{align*}
in the case of the collared $\Hyp$-ellipse. The period $n = m_0 = m_2=m_4$ and the $m_i$'s can be even or odd. Cases $(i)$ and $(v)$ follow from the proof of Theorem 5.5 in \cite{GaR}, where the $m_i$ are all even (case $(i)$), and  $m_0, m_1, m_2$ are odd and $m_3$ is even (case $(v)$). 

We prove $(ii)$, and note that the rest of the cases follow from a similar argument depending upon the parity of the $m_i$'s. Suppose $n=m_0$ is even and $m_1$ is odd. Then 
\begin{align*}
0 &\sim m_0 (P_a-P_0) + m_1(P_c-P_b) \\
   &\sim 2k_0(P_a - P_0) + 2k_1(P_c-P_b) + P_c-P_b \\
   &\sim k_0 (Q_- +Q_+) - k_0(2Q_+) + P_c - P_b \\
   &\sim k_0(Q_- -Q_+) + P_c - P_b \\
   &\sim k_0(Q_- -Q_+) + (Q_- + Q_+) - P_a - P_\nu \\
   &\sim (k_0+1)Q_- - (k_0-1)Q_+ - P_a - P_\nu,
\end{align*}
which is equivalent to $(ii)$. In particular, we used the fact that $P_a$, $P_b$, $P_c$, and $P_\nu$ are branching points on (\ref{eq:ellipticcurve}), i.e. $2P_a \sim 2P_b \sim 2P_c \sim 2P_\nu \sim Q_- + Q_+$. These cases are also further discussed in Section \ref{RotNum} in the context of rotation numbers.

In the case of the transverse $\Hyp$-ellipse, Integrating the differential equation (\ref{eq:diffeq}) from $p_0$ to $p_1$ results in 
\begin{align*}
m_6(P_0-P_b) + m_7(P_a-P_0) &\sim 0, \text{ or } \\
m_8(P_0-P_\nu) + m_9(P_0-P_a) &\sim 0, \text{ or } \\
m_{10}(P_0-P_b) + m_{11}(P_0-P_\nu) &\sim 0.
\end{align*}
The period is now $n = m_{2i} + m_{2i+1}$ for $i=2,3,4$. Just as before, case $(vii)$ follows from the proof of Theorem 5.5 of \cite{GaR}, and the remainder of the cases are proved similarly to $(ii)$ above. 
\end{proof}

\begin{remark}
As noted in the above proof, the divisor conditions $(i)$, $(v)$, and $(vii)$ are identical to the divisor conditions for periodic billiard trajectories given in Theorem 5.5 of \cite{GaR}. However, in the case of the collared and transverse $\Hyp$-ellipses, conditions $(i)$ and $(vii)$ produce $2m$-periodic and $m$-elliptic periodic trajectories. And in the case of the collared $\Hyp$-ellipse, condition $(v)$ produces trajectories which are $(4m+2)$-periodic and $(2m+1)$-elliptic periodic. See Remark 5.7 of \cite{GaR} for further details. 
\end{remark}

The above divisor conditions lead to explicit Cayley-type conditions for elliptic periodicity. 

\begin{theorem}
A billiard trajectory within the collared $\Hyp$-ellipse with caustic curve $\mathcal{C}_\nu$ is $n$-elliptic periodic and not $n$-periodic if and only if one of the following conditions is satisfied:
\begin{enumerate}[(a)]
\item $\nu \in (-\infty, 0) \cup (b, c) \cup (c,\infty)$ and 
	\begin{equation*}
	\det\fourbyfour{B_{3}}{B_{4}}{B_{n+1}}{B_{4}}{B_{5}}{B_{n+2}}{B_{n+1}}{B_{n+2}}{B_{2n-1}}=0 \qquad \text{ for } n \geq 2,
	\end{equation*}
where the entries $B_i$ are given above in Theorem \ref{th:cayley}.

\item $\nu \in (-\infty, 0) \cup (c,\infty)$ and 
	\begin{equation*}
	\det\fourbyfour{F_{1}}{F_{2}}{F_{m}}{F_{2}}{F_{3}}{F_{m+1}}{F_m}{F_{m+1}}{F_{2m-1}}=0 \qquad \text{ for } n=2m \geq 2,
	\end{equation*}
or
	\begin{equation*}
	\det \fourbyfour{D_{2}}{D_{3}}{D_{m+1}}{D_{3}}{D_{4}}{D_{m+2}}{D_{m+1}}{D_{m+2}}{D_{2m}}=0 \qquad \text{ for } n=2m+1 \geq 3
	\end{equation*}
where $\displaystyle\sqrt{\frac{\ep(X-b)(X-c)}{(X-a)(X-\nu)}} = F_0 + F_1X + F_2X^2 + \cdots$ is the Taylor expansion around $X=0$ and the entries $D_i$ are given in Theorem \ref{th:cayley}. 

\item $\nu \in (b, c)$ and 
	\begin{equation*}
	\det\fourbyfour{G_{1}}{G_{2}}{G_{m}}{G_{2}}{G_{3}}{G_{m+1}}{G_{m}}{G_{m+1}}{G_{2m-1}}=0 \qquad \text{ for } n=2m \geq 4,
	\end{equation*}
or
	\begin{equation*}
	\det \fourbyfour{H_{2}}{H_{3}}{H_{m+1}}{H_{3}}{H_{4}}{H_{m+2}}{H_{m+1}}{H_{m+2}}{H_{2m}}=0 \qquad \text{ for } n=2m+1 \geq 5
	\end{equation*}
where $\displaystyle\sqrt{\frac{\ep(X-b)(X-\nu)}{(X-a)(X-c)}} = G_0 + G_1X + G_2X^2 + \cdots$ and $\displaystyle\sqrt{\frac{\ep(X-a)(X-b)(X-\nu)}{X-c}} = H_0 + H_1X + H_2X^2 + \cdots$ are the Taylor expansions around $X=0$.

\item $\nu \in (-\infty, 0) \cup (b, c) \cup (c,\infty)$ and 
	\begin{equation*}
	\det \fourbyfour{I_{2}}{I_{3}}{I_{m+1}}{I_{3}}{I_{4}}{I_{m+2}}{I_{m+1}}{I_{m+2}}{I_{2m}}=0 \qquad \text{ for } n=2m+1 \geq 3
	\end{equation*}
where $\displaystyle\sqrt{\frac{\ep(X-b)(X-c)(X-\nu)}{X-a}} = I_0 + I_1X + I_2X^2 + \cdots$ is the Taylor expansion around $X=0$. 
\end{enumerate}

A billiard trajectory within the transverse $\Hyp$-ellipse with caustic curve $\mathcal{C}_\nu$ is $n$-elliptic periodic and not $n$-periodic if and only if one of the following conditions is satisfied:
\begin{enumerate}[(a)]\setcounter{enumi}{4}
\item $\nu \in (-\infty, b) \cup (b, 0) \cup (0,a) \cup (a, c) \cup (c,\infty)$ and 
	\begin{equation*}
	\det\fourbyfour{B_{3}}{B_{4}}{B_{n+1}}{B_{4}}{B_{5}}{B_{n+2}}{B_{n+1}}{B_{n+2}}{B_{2n-1}}=0 \qquad \text{ for } n \geq 2,
	\end{equation*}
where the entries $B_i$ are given above in Theorem \ref{th:cayley}.

\item $\nu \in (-\infty,b) \cup (a, c) \cup (c,\infty)$ and 
	\begin{equation*}
	\det\fourbyfour{J_{1}}{J_{2}}{J_{m}}{J_{2}}{J_{3}}{J_{m+1}}{J_{m}}{J_{m+1}}{J_{2m-1}}=0 \qquad \text{ for } n=2m \geq 2,
	\end{equation*}
where $\displaystyle\sqrt{\frac{\ep(X-c)(X-\nu)}{(X-a)(X-b)}} = J_0 + J_1X + J_2X^2 + \cdots$ is the Taylor expansion around $X=0$.

\item $\nu \in (b,0)$ and 
	\begin{equation*}
	\det\fourbyfour{F_{1}}{F_{2}}{F_{m}}{F_{2}}{F_{3}}{F_{m+1}}{F_{m}}{F_{m+1}}{F_{2m-1}}=0 \qquad \text{ for } n=2m \geq 2,
	\end{equation*}
where the entries $F_i$ are given above in part (b).

\item $\nu \in (0,a)$ and 
	\begin{equation*}
	\det \fourbyfour{K_{1}}{K_{2}}{K_{m}}{K_{2}}{K_{3}}{K_{m+1}}{K_{m}}{K_{m+1}}{K_{2m-1}}=0 \qquad \text{ for } n=2m \geq 4
	\end{equation*}
where $\displaystyle\sqrt{\frac{\ep(X-a)(X-c)}{(X-b)(X-\nu)}} = K_0 + K_1X + K_2X^2 + \cdots$ is the Taylor expansion around $X=0$.

\item $\nu \in (-\infty, b) \cup (b, 0) \cup (a, c) \cup (c,\infty)$ and 
	\begin{equation*}
	\det \fourbyfour{I_{2}}{I_{3}}{I_{m+1}}{I_{3}}{I_{4}}{I_{m+2}}{I_{m+1}}{I_{m+2}}{I_{2m}}=0 \qquad \text{ for } n=2m +1 \geq 3
	\end{equation*}
where the entries $I_i$ are given above in part (d).

\item $\nu \in (-\infty, b) \cup (0,a) \cup (a, c) \cup (c,\infty)$ and 
	\begin{equation*}
	\det \fourbyfour{L_{2}}{L_{3}}{L_{m+1}}{L_{3}}{L_{4}}{L_{m+2}}{L_{m+1}}{L_{m+2}}{L_{2m}}=0 \qquad \text{ for } n=2m +1 \geq 3
	\end{equation*}
where $\displaystyle\sqrt{\frac{\ep(X-a)(X-c)(X-\nu)}{X-b}} = L_0 + L_1X + L_2X^2 + \cdots$ is the Taylor expansion around $X=0$.
\end{enumerate}
\end{theorem}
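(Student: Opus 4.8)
The plan is to translate each divisor relation from the preceding theorem into a Hankel determinant by the same Cayley-type mechanism that underlies Theorem \ref{th:cayley}, applied now with the branch points $P_a,P_b,P_c$ (and not only $P_\nu$) divided out. I would handle all twelve cases at once through the following uniform construction. Fix a relation of the shape $r Q_- - s Q_+ - \sum_{k} P_{\alpha_k}\sim0$, where $r-s$ equals the number $t$ of branch points $\alpha_k\in\{a,b,c,\nu\}$ occurring; this relation holds exactly when there is a nonzero meromorphic function $g$ whose only poles are of order at most $s$ at $Q_+$ and simple at each $P_{\alpha_k}$, with a zero of order $r$ at $Q_-$. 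Writing $g=\dfrac{u(X)+v(X)Y}{X^{s}\prod_k(X-\alpha_k)}$ and using that each $X-\alpha_k$ has a double zero at the branch point $P_{\alpha_k}$, the requirement of at most a simple pole there forces $u(\alpha_k)=0$, hence $u(X)=\big(\prod_k(X-\alpha_k)\big)u_1(X)$ and
$$g=\frac{1}{X^{s}}\Big(u_1(X)+v(X)\,\Phi(X)\Big),\qquad \Phi(X)=\frac{Y}{\prod_k(X-\alpha_k)}.$$
Here $\Phi$ is precisely the function whose Taylor expansion at $X=0$ produces the coefficients $B_i,F_i,G_i,H_i,I_i,J_i,K_i,L_i$ of the statement, the set of branch points removed matching the $P_{\alpha_k}$ of each relation.

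Next I would expand at the two points over $X=0$. On the sheet $Y=+\sqrt{\mathcal P}$ (the point $Q_+$) one has $\Phi=\sum_{i\ge0}\Phi_iX^i$, while on the sheet $Q_-$ the sign flips and $\Phi=-\sum_i\Phi_iX^i$. Regularity of $g$ away from the prescribed poles, together with absence of poles at infinity, bounds $\deg u_1$ and $\deg v$; vanishing of $g$ to order $r$ at $Q_-$ then becomes the single approximation requirement $u_1(X)-v(X)\sum_i\Phi_iX^i=O(X^{\,r+s})$. The lowest-order equations merely solve for the coefficients of $u_1$ and impose nothing on $v$, while the remaining equations form a homogeneous linear system in the coefficients of $v$ whose matrix, after reversing the ordering of the unknowns, is exactly a Hankel matrix in the $\Phi_i$. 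A nonzero $g$ therefore exists if and only if this Hankel determinant vanishes. Carrying out the degree count, the two-branch-point even relations ($t=2$, $r=m+1$, $s=m-1$) give the size-$m$ determinants in $F_{i+j-1},G_{i+j-1},J_{i+j-1},K_{i+j-1}$ of parts (b),(c),(f),(g),(h); the one-branch-point odd relations ($t=1$, $r=m+1$, $s=m$) give the size-$m$ determinants in $D_{i+j},H_{i+j},I_{i+j},L_{i+j}$ of parts (b),(c),(d),(i),(j); and the branch-point-free even relation ($t=0$, $r=s=m$) reproduces the size-$(m-1)$ determinant in $B_{i+j+1}$, which is exactly the condition of Theorem \ref{th:cayley} and yields parts (a),(e).

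Two further points need care. First, interchanging $Q_+$ and $Q_-$, which distinguishes the collared relations $(m+1)Q_--(m-1)Q_+$ from the transverse relations $(m+1)Q_+-(m-1)Q_-$, only changes the sign of every $\Phi_i$; this rescales the Hankel matrix along its rows and leaves the vanishing of its determinant unchanged, which is why the transverse parts reuse the same functions and determinant shapes. Second, on each admissible interval for $\nu$ one must verify that the radicand defining $\Phi$ keeps a constant sign near $X=0$, so that the $\Phi_i$ are real and the convention $\ep=\sign(b\nu)$ is the correct branch; this is exactly what pins down the listed $\nu$-ranges, which are otherwise inherited verbatim from the divisor relations. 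Finally, the passage from the divisor parameter to the stated elliptic period $n$ follows the identifications recorded in the Remark (for instance $m(Q_--Q_+)\sim0$ yields an $m$-elliptic-periodic trajectory, so one sets $m=n$ in part (a)).

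The main obstacle is the last piece of bookkeeping: obtaining the exact matrix sizes and the exact starting indices — $B$ beginning at $B_3$, the two-branch-point families at index $1$, the one-branch-point families at index $2$ — requires tracking the degree bounds on $u_1$ and $v$ and the precise order of vanishing demanded at $Q_-$ in each case, and then re-indexing the resulting Toeplitz-type system into Hankel form. The algebro-geometric content is uniform and light; the difficulty lies entirely in the careful, case-by-case index accounting.
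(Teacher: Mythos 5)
Your proposal is correct and follows essentially the same route as the paper: each divisor relation is converted into the nontriviality of a Riemann--Roch space whose elements are built from polynomials and the function $Y/\prod_k(X-\alpha_k)$, and the required order of vanishing at $Q_-$ becomes a homogeneous linear system whose Hankel determinant in the Taylor coefficients of that function is the stated condition. The paper carries this out only for the representative case (c) and asserts the rest are similar, whereas you organize all twelve cases uniformly via the parameter $t=r-s$; the degree and index bookkeeping you flag as the remaining work does check out against the stated matrix sizes and starting indices.
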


\begin{proof}
We prove part $(c)$ and note the proofs for the remaining parts are similar. Part $(c)$ uses divisor conditions $(iii)$ and $(vi)$ from the previous theorem. 

Consider first the case $n=2m$ and divisor condition $(iii)$: $(m+1)Q_- - (m-1)Q_+ - P_a - P_c \sim 0.$ This divisor condition is equivalent to the existence of a meromorphic function with a zero of order $m+1$ at $Q_-$, a pole of order $m-1$ at $Q_+$, and simple poles at $P_a$ and $P_b$. A basis of $\mathcal{L}((m-1)Q_{+} + P_a + P_c)$ is $\{1,f_1, \ldots, f_m\}$ where $$f_k = \frac{y-G_0 - G_1 x - \cdots - G_k x^k}{x^{k-1}}.$$ The existence of such a function is equivalent to the given determinant condition. 

Now consider $n=2m+1$ and divisor condition $(vi)$: $(m+1)Q_- - mQ_+ - P_c \sim 0$. This divisor condition is equivalent to the existence of a meromorphic function with a zero of order $m+1$ at $Q_-$, a pole of order $m$ at $Q_+$, and a simple pole at $P_c$. A basis of the space of such functions $\mathcal{L}(mQ_{+} + P_c)$ is $\{1,g_1, \ldots, g_m\}$ where $$g_k = \frac{y-H_0 - H_1 x - \cdots - H_k x^k}{x^{k}}.$$ The existence of such a function is equivalent to the second determinant condition in part $(c)$. 
\end{proof}

\begin{example}[2-Elliptic Periodic]
In the collared $\Hyp$-ellipse, a 2-elliptic periodic trajectory can be found by satisfying either $B_3=0$ or $F_1=0$, which are equivalent to 
$$ (a b c+(a b  -a c  -b c) \nu ) (a b c + (-a b + ac -b c) \nu ) (a b c + (-a b -a c +b c) \nu )=0$$ 
and 
$$a b c + (-a b -a c + bc) \nu =0,$$ 
respectively. Clearly any solution to $F_1=0$ will also satisfy $B_3=0$. 

In the transverse $\Hyp$-ellipse, a 2-elliptic periodic trajectory can be found by satisfying either $B_3=0$, $F_1=0$, $J_1=0$. The first two are given above, and the third is equivalent to 
$$ a b c+(a b -a c -b c) \nu=0.$$
Any solution to $F_1=0$ or $J_1=0$ will also be a solution to $B_3=0$. 

Pictures of such trajectories are shown in Figure \ref{fig:2EP}.
\end{example}

\begin{figure}[thp]
\centering
\begin{tabular}{c c}
a) \includegraphics[width=0.40\textwidth]{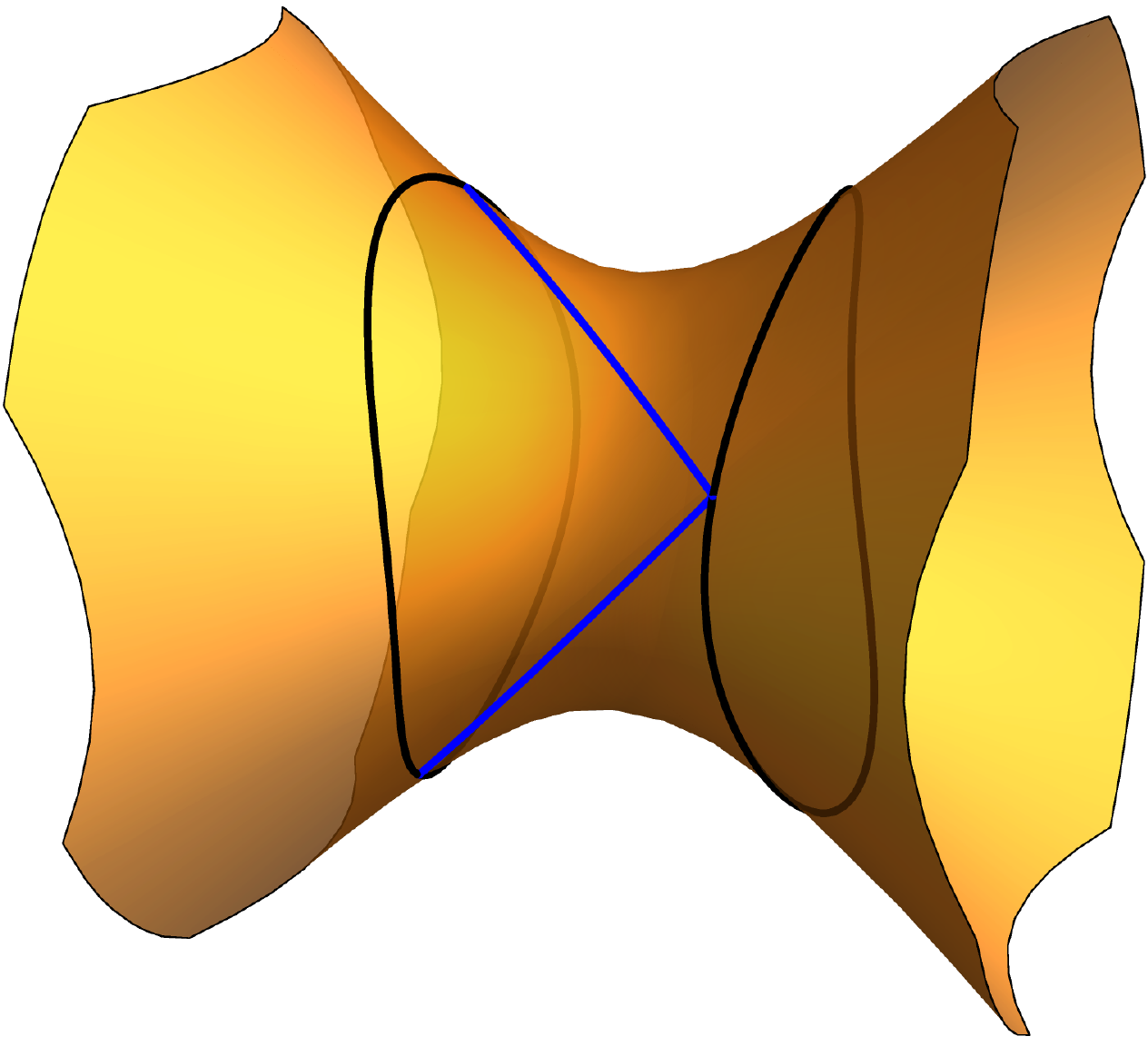} & b) \includegraphics[width=0.40\textwidth]{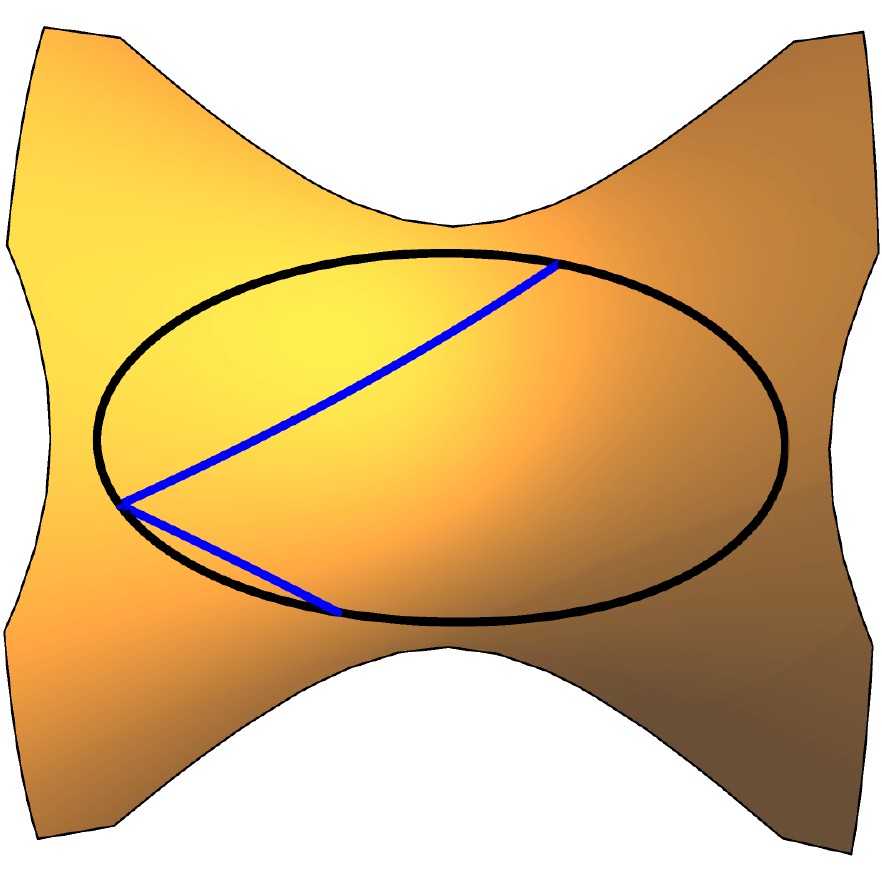}
\end{tabular}
\caption{Two possible trajectories that are 2-elliptic periodic and not 2-periodic in the collared (a) and transverse (b) $\Hyp$-ellipses. }
\label{fig:2EP}
\end{figure}

\begin{example}[3-Elliptic Periodic]
In the collared $\Hyp$-ellipse, a 3-elliptic periodic trajectory can be found by satisfying either
$$\det\twobytwo{B_3}{B_4}{B_4}{B_5}=0, \text{ or } D_2=0, \text{ or } I_2=0.$$
This is equivalent to 
\begin{align*}
0&=\left[\left(-3 a^2 b^2+c^2 (a-b)^2+2 a b c (a+b)\right)\nu ^2 +2 a b c (a b-ac-bc) \nu+ (abc)^2\right] \\
&\times \left[(-a^2 (b-c)^2+2 a b c (b+c)-b^2 c^2)\nu^2 - 2abc(ab + ac + bc)\nu + 3(abc)^2\right] \\
&\times \left[(a^2 (b-c)^2+2 a b c (b+c)-3 b^2 c^2)\nu^2 + 2abc (-ab -ac +bc )\nu + (abc)^2\right] \\
&\times \left[(a^2 (b-c) (b+3 c)+2 a b c (c-b)+b^2 c^2)\nu^2 + 2abc(-ab + ac - bc)\nu + (abc)^2 \right] 
\end{align*}
or 
$$0=3(abc)^2\ -2 abc\left(ab + bc + ac \right)\nu + \left( 4 a b c (a+b+c)-(a b+a c+b c)^2\right) \nu^2, $$
or 
$$ 0= (abc)^2 -2abc(-bc+ab+ac)\nu + (a^2 (b-c)^2+2 a b c (b+c)-3 b^2 c^2) \nu^2,$$
respectively.

In the transverse $\Hyp$-ellipse, a 3-periodic trajectory can be found by satisfying either 
$$\det\twobytwo{B_3}{B_4}{B_4}{B_5}=0, \text{ or } I_2=0, \text{ or } L_2=0.$$
The first two conditions are given above while the third is equivalent to 
$$0 = (abc)^2 -2 a b c (a b-a c+b c)\nu + (b^2 c^2+2 a b c (c-b)+a^2(b^2+2 b c-3 c^2))\nu^2 $$
Pictures of such 3-elliptic periodic trajectories are shown in Figure \ref{fig:3EP}.
\end{example}

\begin{figure}[thp]
\centering
\begin{tabular}{c c}
a) \includegraphics[width=0.40\textwidth]{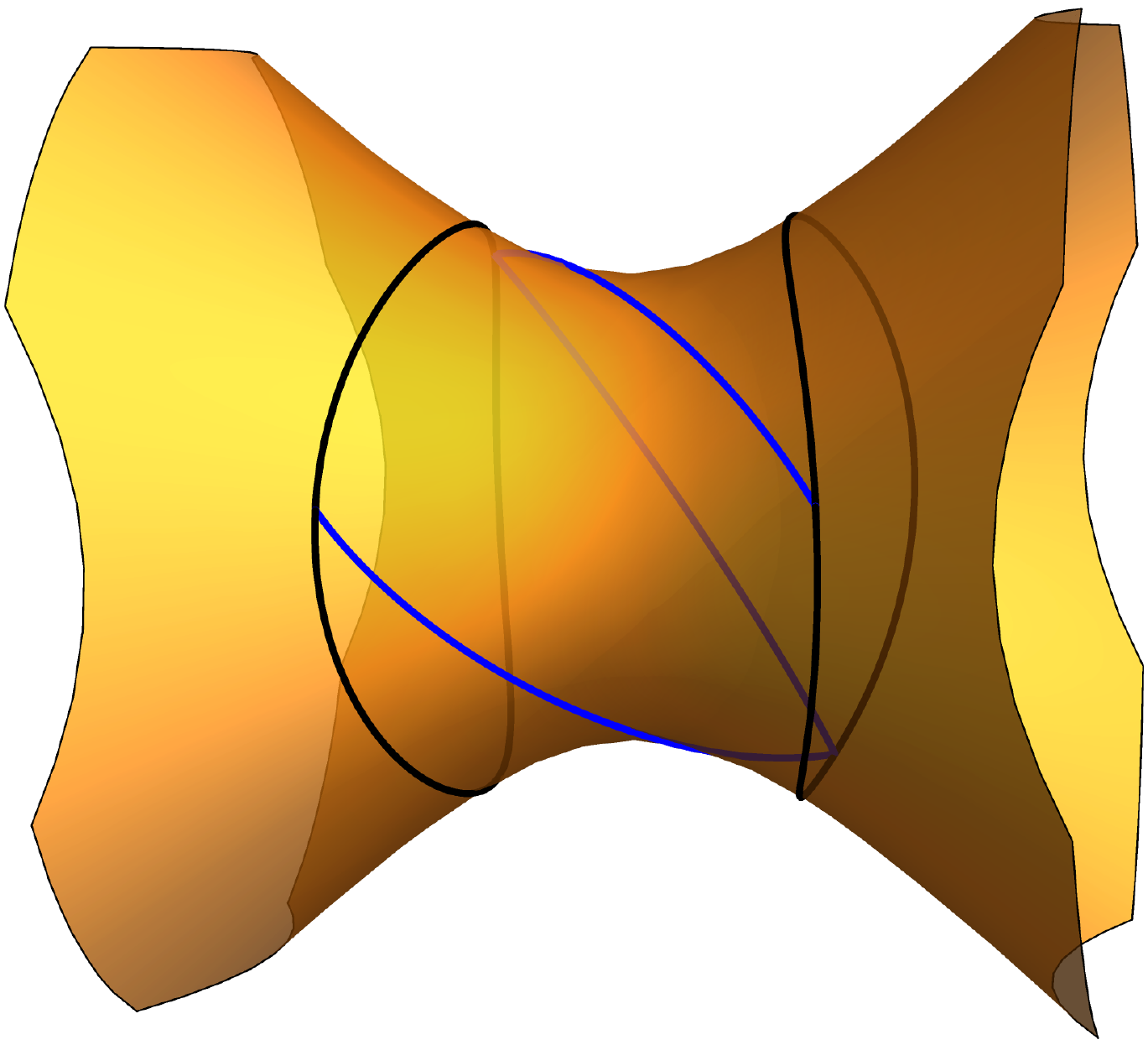} & b) \includegraphics[width=0.40\textwidth]{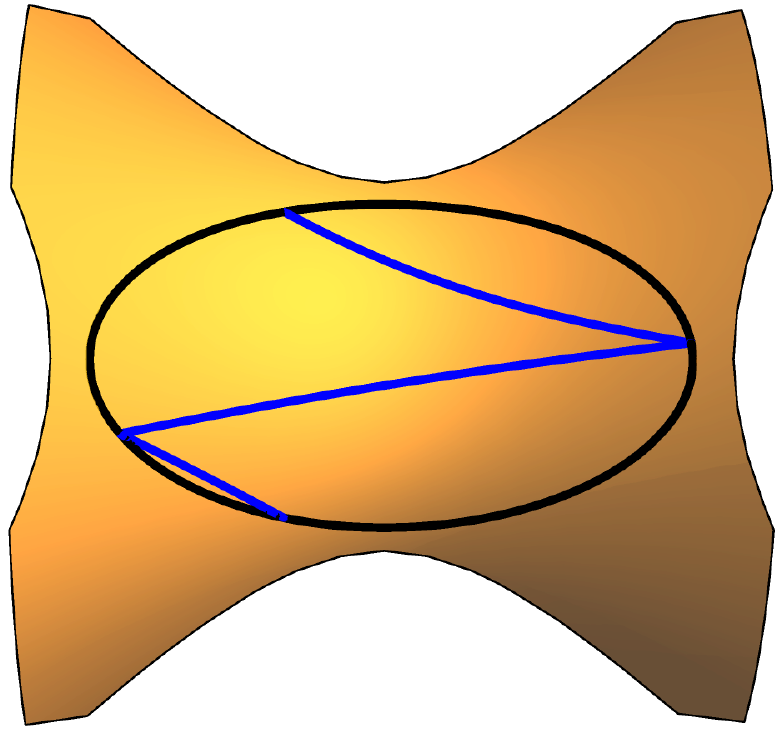}
\end{tabular}
\caption{Two possible trajectories that are 3-elliptic periodic and not 3-periodic in the collared (a) and transverse (b) $\Hyp$-ellipses. }
\label{fig:3EP}
\end{figure}

\newpage

\section{Topological properties of billiards in confocal families}
\label{TopProperties}

As described in the previous section, billiards  within confocal conics on $\Hyp$ come in two distinct geometric types. In this section we present a topological description of billiards in each setting using Fomenko invariants (see \cites{BMF1990, BF2004, BBM2010}).
Such descriptions have been made for elliptic billiards in the Euclidean plane \cites{DR2009, DR2010}, domains bounded by confocal parabolas \cite{Fokicheva2014}, with Hooke's potential \cite{R2015}, the Minkowski plane and geodesics on ellipsoids in $\Mink$ \cite{DR2017}, non-convex billiards \cite{Ved2019}, billiards with slipping \cite{FVZ2021}, and broader classes of billiards and Hamiltonian impact systems \cites{VK2018, FV2019, PRK2020}.
For the larger body of work on the topic, see also the references therein.

\subsection{Transverse $\Hyp$-ellipse} In the case of the transverse $\Hyp$-ellipse, the constants satisfy $b < 0 < a < c$, while the confocal curves are of elliptic-type if $\lambda \in (b,a)\cup(a,c)$ and hyperbolic-type if $\lambda \in (-\infty, b] \cup [c,\infty)$, see Section \ref{sec:confocal} for details.

The billiard table $\mathcal{T}$ will be the set of all points on and in the interior of the transverse $\Hyp$-ellipse $\mathcal{C}_0$ with $z>0$.
Topologically, $\mathcal{T}$ is homeomorphic to the closed planar disk. Consider a point $P \in \mathcal{C}_0$ and suppose $u,v \in T_P\Hyp$ are the unit vectors, which is itself homeomorphic to $\Sn{1}$. 
Let $\sim$ be the equivalence relation on the solid torus $\mathcal{T} \times \Sn{1}$ defined by 
$$ (P,u) \sim (P,v)  \iff  P \in \mathcal{C}_0 \text{ and } u,v \in \Sn{1} \text{ reflect to one another off } \mathcal{C}_0.$$ 

Every billiard trajectory inside $\mathcal{T}$ induces a trajectory in $\mathcal{T} \times \Sn{1} / \sim$. Further, this correspondence in trajectories induces a projection of the billiard phase space into $\mathcal{T} \times \Sn{1} / \sim$ that preserves the trajectories and leaves of the Liouville foliation. 

\begin{theorem}
	The manifold $\mathcal{T} \times \Sn{1}/\sim$ is represented by the Fomenko graph in Figure \ref{TransverseFomenko}.
\end{theorem}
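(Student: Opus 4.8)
The plan is to make the Liouville foliation on the closed three-manifold $Q:=\mathcal{T}\times\Sn{1}/\sim$ completely explicit by separation of variables, and then to read the Fomenko graph off the resulting pair of one-dimensional reduced motions. By the integrability on $\Hyp$ established in \cite{GaR}, every billiard trajectory has all of its segments tangent to a single confocal conic $\mathcal{C}_\nu$, so the caustic parameter $\nu$ is a first integral of the billiard flow on $Q$, and its level sets are the leaves of the foliation whose existence and invariance are asserted just before the theorem. The separation is encoded in \eqref{eq:diffeq}: the two elliptic coordinates $\lambda_1\in[b,0]$ and $\lambda_2\in[0,a]$ of a point inside $\mathcal{C}_0$ evolve independently, each oscillating over the subinterval on which $\mathcal{P}(\lambda)=\ep(\lambda-a)(\lambda-b)(\lambda-c)(\lambda-\nu)$ is nonnegative, turning back at the roots of $\mathcal{P}$ and reflecting at the table boundary where the relevant coordinate reaches $0$. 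A regular leaf is thus a product of the two circles traced in the $(\lambda_i,p_i)$ phase planes, so the whole problem reduces to tracking how these oscillation intervals are born, merge, and split as $\nu$ varies.

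First I would locate the critical values of $\nu$, which occur exactly when a root of $\mathcal{P}$ collides with an endpoint of an oscillation interval or with the boundary value $0$; these are $\nu\in\{b,0,a,c\}$ together with the limiting behaviour as $\nu\to\pm\infty$, precisely the thresholds separating the geometric subfamilies of confocal conics described in Section \ref{sec:confocal} and governing the $\nu$-intervals in the periodicity conditions proved above. Each open interval between consecutive critical values yields an edge of the graph, and on it I would count the connected components of a regular level set (a fixed number, typically one or two Liouville tori) by counting the disjoint intervals on which $\mathcal{P}\ge 0$ within the admissible ranges; this fixes the multiplicities of the edges. Since the trajectory type (space-like versus time-like) is locally constant and determined by the caustic, these edges also split naturally according to type.

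Next I would identify the atom over each critical value by analysing the degenerate leaf. When an oscillation interval contracts to a point, a simple root of $\mathcal{P}$ meeting a boundary, the corresponding torus is created or destroyed and the atom is $A$; when two intervals fuse through a shared endpoint the leaf develops a figure-eight cross-section and the atom is of saddle type ($B$, or a more degenerate atom when several tori reconnect simultaneously). The hard part will be the genuinely Lorentzian feature with no Euclidean analogue: at the light-like directions the reflection is defined only through the continuous extension of \cite{KT}, and the corresponding critical circles, which are generatrices of $\Hyp$, sit over particular critical $\nu$-values where space-like and time-like families of trajectories meet. Deciding the orientability of the singular leaf there, and hence whether the atom is a plain $B$ or a starred (or otherwise more complicated) atom, requires a careful local model of the flow near these light-like tangencies rather than the naive separated picture.

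Finally I would assemble the vertices and edges into the molecule and compute its numerical marks. Using the separated coordinates I would fix an admissible basis of cycles on each boundary torus, one following the $\lambda_1$-oscillation and one the $\lambda_2$-oscillation, write the gluing across each atom as an integer matrix, and extract the marks $r_i$, $\varepsilon_i$, and the integer marks $n_k$ of the Fomenko--Zieschang invariant, matching the outcome against Figure \ref{TransverseFomenko}. The orientation bookkeeping through the light-like critical circles is where care is again needed, since it is exactly what determines the signs $\varepsilon_i$ and the $n_k$.
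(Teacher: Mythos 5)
Your overall strategy --- stratify the phase space by the caustic parameter, count the Liouville tori on each regular stratum, and identify the atoms at the bifurcation values --- is the same as the paper's, and your list of candidate critical values $\{b,0,a,c\}$ together with $\infty$ is the right one (the paper then checks that $c$ and $\infty$ are in fact not bifurcations, which your analysis of the degenerate leaves would also reveal). The genuine gap is in the step where you count the connected components of a regular leaf ``by counting the disjoint intervals on which $\mathcal{P}\ge 0$ within the admissible ranges,'' treating the leaf as the product of the two circles traced in the separated $(\lambda_i,p_i)$ phase planes. Elliptic coordinates on $\mathcal{T}$ are not injective: points symmetric with respect to the coordinate planes share the same $(\lambda_1,\lambda_2)$, so a single admissible rectangle of elliptic coordinates can unfold onto several disjoint regions of the actual table. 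Concretely, for $\nu\in(b,0)$ one finds a single oscillation interval $[\nu,0]$ for $\lambda_1$ and a single interval $[0,a]$ for $\lambda_2$, so your count yields one torus; but the caustic meets $\mathcal{C}_0$ in four points and the accessible part of $\mathcal{T}$ splits into two components, one on each side of the plane $y=0$, so the level set is in fact two tori (and similarly for $\nu\in(0,a)$, with the plane $x=0$). This two-torus count is exactly what forces the $B$ atoms at $\lambda=a$ and $\lambda=b$ and the two pairs of $A$ atoms as $\lambda\to0^{\pm}$; with the one-torus count the molecule could not be the one in Figure \ref{TransverseFomenko}. The repair is the geometric argument the paper uses: for each range of $\nu$, decide whether the caustic intersects the boundary $\mathcal{C}_0$ in four points, thereby disconnecting the region swept by the trajectories, or not.

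Two smaller remarks. Your concern that the light-like directions might force a starred or otherwise nonstandard atom does not materialize: the light-like level set sits at $\lambda=\infty$, in the interior of a regular edge, while the critical circles at $\lambda=a,b$ are ordinary $2$-periodic orbits in the symmetry planes with two homoclinic separatrices, i.e.\ plain $B$ atoms. Finally, you rightly note that the marks $r$, $\varepsilon$, $n$ of Figure \ref{TransverseFomenko} must be extracted from gluing matrices in admissible bases; be aware that the paper's own proof of this particular theorem also stops short of writing that computation out (it does so only for the collared case), so completing it would go beyond, not merely match, the published argument.
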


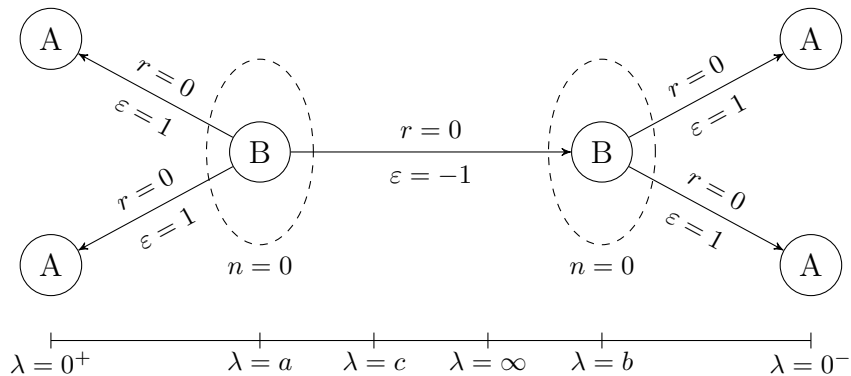
\begin{figure}[th]
	\centering
	\begin{tikzpicture}[>=stealth',el/.style = {inner sep=5pt, align=left, sloped}]
	\tikzset{vertex/.style = {shape=circle,draw,minimum size=1.5em}}
	\node[vertex] (aa) at  (0,1.5) {A};
	\node[vertex] (bb) at  (0,-1.5) {A};
	\node[vertex] (cc) at  (2.75,0) {B};
	\node[vertex] (dd) at  (7.25,0) {B};
	\node[vertex] (ee) at  (10,1.5) {A};
	\node[vertex] (ff) at  (10,-1.5) {A};
	\path[->]
	(cc) edge node[el,below]  {\footnotesize $\varepsilon=1$} node[el, above] {\footnotesize $r=0$} (aa)
	(cc) edge node[el,below]  {\footnotesize $\varepsilon=1$} node[el, above] {\footnotesize $r=0$} (bb)
	(cc) edge node[el,below]  {\footnotesize $\varepsilon=-1$} node[el, above] {\footnotesize $r=0$} (dd)
	(dd) edge node[el,below]  {\footnotesize $\varepsilon=1$} node[el, above] {\footnotesize $r=0$} (ee)
	(dd) edge node[el,below]  {\footnotesize $\varepsilon=1$} node[el, above] {\footnotesize $r=0$} (ff);
	\draw (0,-2.5) -- (10,-2.5);
	\foreach \x in {0,2.75,4.25,5.75,7.25,10}
	\draw[shift={(\x,-2.5)},color=black] (0pt,3pt) -- (0pt,-3pt);
	\node[below] at (0,-2.5) {\footnotesize $\lambda=0^+$};
	\node[below] at (2.75,-2.5) {\footnotesize $\lambda=a$};
	\node[below] at (4.25,-2.5) {\footnotesize $\lambda=c$};
	\node[below] at (5.75,-2.5) {\footnotesize $\lambda=\infty$};
	\node[below] at (7.25,-2.5) {\footnotesize $\lambda=b$};
	\node[below] at (10,-2.5) {\footnotesize $\lambda=0^-$};
	\draw[dashed] (2.75,0) ellipse (20pt and 35pt);
	\node at (2.75,-1.5) {\footnotesize $n=0$};
	\draw[dashed] (7.25,0) ellipse (20pt and 35pt);
	\node at (7.25,-1.5) {\footnotesize $n=0$};
	
	\end{tikzpicture}
	\caption{Fomenko graph for the billiard in the transverse $\Hyp$-ellipse with $z>0$. }
	\label{TransverseFomenko}
\end{figure}

\begin{proof}
Each level set of the manifold corresponds to billiard motion with a fixed confocal curve as a caustic, $\mathcal{C}_{\lambda}$ for $\lambda \in \R{} \cup \{\infty\}$.
	
When $\lambda \notin \{a,b,c\}$, the level sets are nondegenerate. The level set is a single torus when the caustic is hyperbolic-type -- that is, when $\lambda \in (-\infty, b) \cup (c,\infty) \cup \{\infty\}$. The level set is a union of two tori when the caustic is elliptic-type and does intersects $\mathcal{C}_0$ at four distinct points (i.e. when $\lambda \in (b,a)$). And the level set is again a single torus when $\mathcal{C}_{\lambda}$ is elliptic-type and does not intersect $\mathcal{C}_0$ (i.e. when $\lambda \in (a,c)$). 
	
If $\lambda = a$, then the level set contains a single closed trajectory, which is two-periodic and contained in the plane $x=0$, and two homoclinic separatrices.
A trajectory on any of the separatrices is placed on one side of the coordinate $yz$-plane and its segments alternately contain foci $F_{-+}^x$ and $F_{++}^x$.
	
As $\lambda \to a^-$, the caustic is elliptic-type and intersects $\mathcal{C}_0$. Trajectories can be in one of two regions bounded by $\mathcal{C}_0$ and $\mathcal{C}_{\lambda}$, so the level set is the union of two tori. As $\lambda \to a^+$, the caustic is elliptic-type but does not intersect $\mathcal{C}_0$, and hence the level set is a single torus. This collection of level sets is represented by the Fomenko atom B. A similar analysis at $\lambda =b$ provides an analogous result. 
	
If $\lambda = c$, the segments of a trajectory have a simple geometric description. 
The plane which determines each billiard segment alternately contains antipodal pairs of foci (i.e. the plane determining one segment will contain $F_{++}^z$ and $F_{--}^z$ while the plane determining the next segment of the trajectory will contain $F_{+-}^z$ and $F_{-+}^z$). 
Such trajectories do not limit to periodic trajectories as before. While $\lambda=c$ is a transition point for the caustic to change from hyperbolic- to elliptic-type, neither caustic curves intersect $\mathcal{C}_0$, and the billiard itself does not fundamentally change at $\lambda=c$.
	
If $\lambda = \infty$, the trajectories are all light-like, i.e.~their segments are placed along generatrices of $\Hyp$. Their behavior is qualitatively identical to when $\lambda \in (-\infty, b) \cup (a, \infty)$.
	
In the neighborhood of $\lambda=b$, the analysis is similar to the case $\lambda=a$.

Consider the limiting case $\lambda =0$. As $\lambda \to 0^-$, the billiard motion is in one of two regions bounded by $\mathcal{C}_0$ and $\mathcal{C}_{\lambda}$, each lying on one side of the plane $y=0$. The limiting motion along the boundary is periodic: the trajectory moves along the time-like arc of the boundary. This periodic motion is represented by the two A atoms in Figure \ref{TransverseFomenko}. As $\lambda \to 0^+$, the same analysis is true except that the two regions between $\mathcal{C}_0$ and $\mathcal{C}_{\lambda}$ are on opposite sides of the plane $x=0$, and the limiting periodic trajectories are space-like arcs of $\mathcal{C}_0$. 
\end{proof}

\begin{remark}\label{rem:type}
We note that the trajectories with the caustics $\mathcal{C}_{\lambda}$ such that $\lambda>0$ are space-like, while they are time-like for $\lambda<0$.
\end{remark}

\subsection{Collared $\Hyp$-ellipse} In the case of the collared $\Hyp$-ellipse, the constants satisfy $0 < a < b < c$ and the confocal curves are of elliptic-type if $\lambda \in (-\infty,a)$ and hyperbolic-type if $\lambda \in (b,c)$, see Section \ref{sec:confocal} for details.

Let $\mathcal{E}$ be the billiard table; that is, all points on and in the interior of the collared $\Hyp$-ellipse $\mathcal{C}_0$. Topologically, $\mathcal{E}$ is homeomorphic to the closed annulus. Consider a point $P \in \mathcal{C}_0$ and suppose $u,v \in T_P\Hyp$, where $u$, $v$ are unit vectores, which is itself homeomorphic to $\Sn{1}$. 
Let $\sim$ be the equivalence relation on hollow torus with thickened walls $\mathcal{E} \times \Sn{1}$ defined by 
$$ (P,u) \sim (P,v)  \iff  P \in \mathcal{C}_0 \text{ and } u,v \in \Sn{1} \text{ reflect to one another off } \mathcal{C}_0.$$ 

Every billiard trajectory inside $\mathcal{E}$ induces a trajectory in $\mathcal{E} \times \Sn{1} / \sim$. Further, this correspondence in trajectories induces a projection of the billiard phase space into $\mathcal{E} \times \Sn{1} / \sim$ that preserves the trajectories and leaves of the Liouville foliation. 

\begin{theorem}
	The manifold $\mathcal{E} \times \Sn{1} /\sim$ is represented by the Fomenko graph in Figure \ref{CollaredFomenko}.
\end{theorem}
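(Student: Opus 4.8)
The plan is to carry out a case-by-case analysis of the Liouville foliation over the parameter interval $\lambda \in \R{} \cup \{\infty\}$, exactly mirroring the transverse case but adapted to the collared geometry where $0 < a < b < c$ and the elliptic-type caustics occur for $\lambda \in (-\infty, a)$ while the hyperbolic-type caustics occur for $\lambda \in (b, c)$. First I would fix the billiard table $\mathcal{E}$ as the closed annulus and recall that its unit tangent bundle, after the reflection quotient, is the relevant phase-space model whose Liouville foliation is sliced by the value of the caustic parameter $\lambda$. For each regular value $\lambda \notin \{a, b, c\}$ I would determine the number of connected components of the level set (one or two tori), using the geometric dichotomy already established in Section \ref{sec:confocal}: whether the caustic $\mathcal{C}_{\lambda}$ intersects the boundary $\mathcal{C}_0$ in four points (splitting the admissible region into two, hence two tori) or does not intersect it (a single torus).

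Next I would identify the singular fibers at the three critical parameter values $\lambda = a$, $\lambda = b$, $\lambda = c$, together with the boundary behaviours at $\lambda = 0$ and $\lambda = \infty$, and assign the correct Fomenko atom to each. The expectation, by analogy with the transverse theorem and with the Euclidean and Minkowski planar results of \cites{DR2009,DR2010,DR2017}, is that the transitions where the caustic passes through a focal conic and changes whether it meets $\mathcal{C}_0$ will produce $B$-atoms (a single torus on one side, a pair of tori on the other, joined through a figure-eight singular fiber with homoclinic separatrices), while the limiting periodic motions along arcs of the boundary at $\lambda = 0$ will produce the terminal $A$-atoms. Here the collared geometry differs essentially from the transverse one: because $\mathcal{E}$ is an annulus rather than a disk, the boundary $\mathcal{C}_0$ itself has two components, and I would track carefully how many $A$-atoms arise at the $\lambda = 0$ ends and whether an additional critical circle appears, since this is precisely where the two graphs will diverge.

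To pin down each atom rigorously I would, at each critical $\lambda$, describe the singular trajectory explicitly in terms of the foci: identify the closed (two-periodic or boundary-periodic) orbits and the separatrices connecting them, and count the number of critical circles on the singular fiber. Computing the marks $r$ and $\varepsilon$ on each edge of the graph would follow from the standard gluing analysis of the Liouville tori across each atom, reading off the rotation of the cycle basis through the singular fiber; I anticipate these come out to $r = 0$ and $\varepsilon = \pm 1$ as in the transverse case, but the collared topology may force a nontrivial value on the edge corresponding to the annular core.

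The main obstacle I expect is correctly handling the annular (rather than disk) topology of $\mathcal{E}$: the two boundary components and the noncontractible core circle mean that the phase space $\mathcal{E} \times \Sn{1}/\sim$ has different first homology than in the transverse case, so the identification of which cycles bound and the resulting edge marks require genuine care rather than a direct transcription of the previous proof. In particular, verifying that the singular fiber structure at the elliptic-to-hyperbolic transition does not introduce an extra family atom, and confirming the global matching of the two $B$-atoms with the correct number of $A$-atoms so that the graph closes up consistently with an annular table, is where the real content of the theorem lies.
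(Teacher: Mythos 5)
Your overall strategy --- slicing the Liouville foliation by the caustic parameter, counting torus components on regular leaves, identifying the singular fibers at critical values, and then computing the gluing marks --- is the same as the paper's. But your concrete predictions go wrong at precisely the points where the collared case differs from the transverse one, and those points are the substance of the theorem. First, the singular fiber at the elliptic-to-hyperbolic transition is a single $\text{C}_2$ atom located at $\lambda=c$, not a pair of B atoms: as $\lambda\to c^-$ the two tori sit in the two regions cut off by the hyperbolic-type caustic (symmetric about $z=0$), while as $\lambda\to c^+$ the two tori are distinguished by the winding direction around the annulus, and the singular fiber carries four critical circles and four separatrices. A plan built around ``matching the two B-atoms'' cannot close up into the correct graph. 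Second, the terminal A atoms are not at $\lambda=0$: they occur at $\lambda=b$ (the two $2$-periodic trajectories in the plane $y=0$) and at $\lambda=a$ (the two orientations of the space-like circle around the collar in the plane $x=0$). At $\lambda=0$ nothing singular happens in the collared case; in fact for $\lambda\in(0,a)$ the caustic lies inside $\mathcal{E}$, the trajectories are closed geodesics that never reach the boundary, and every torus on those leaves is resonant --- a phenomenon your outline does not anticipate.

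You would also need to handle two features of the parameter range that your proposal passes over: for $\lambda\in(c,+\infty)$ there is no confocal curve on $\Hyp$ at all, yet the leaves are still well-defined (two tori, one per winding direction, determined by the planes tangent to the cone), and the interval $\lambda\in(a,b)$ is simply not attained. Finally, the marks are not uniformly $r=0$: the paper computes the gluing matrices $\iltwobytwo{0}{1}{1}{0}$ on the two edges toward $\lambda=b$ (giving $r=0$) and $\iltwobytwo{1}{0}{0}{-1}$ on the two edges toward $\lambda=a$ (giving $r=\infty$), the latter reflecting that the non-contractible collar cycle is the fiber cycle on that side. Without these corrections the proposed argument would produce a different, and incorrect, marked molecule.
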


\begin{figure}[th]
	\centering
	\begin{tikzpicture}[>=stealth',el/.style = {inner sep=3pt, align=left, sloped},em/.style = {inner sep=3pt, pos=0.75, sloped}]
	\tikzset{vertex/.style = {shape=circle,draw,minimum size=1.5em}}
	\node[vertex] (aa) at  (0,1.5) {A};
	\node[vertex] (bb) at  (5,0) {$\text{C}_2$};
	\node[vertex] (ee) at  (10,1.5) {A};
	\node[vertex] (ff) at  (0,-1.5) {A};
	\node[vertex] (jj) at  (10,-1.5) {A};
	\path[->] 
	(bb) edge node[el,below]  {\footnotesize $\varepsilon=1$} node[el, above] {\footnotesize $r=0$} (aa) 
	(bb) edge node[el,below]  {\footnotesize $\varepsilon=1$} node[el, above] {\footnotesize $r=\infty$} (ee) 
	(bb) edge node[el,below]  {\footnotesize $\varepsilon=1$} node[el, above] {\footnotesize $r=0$} (ff) 
	(bb) edge node[el,below]  {\footnotesize $\varepsilon=1$} node[el, above] {\footnotesize $r=\infty$} (jj); 
	\draw (0,-3) -- (10,-3);
	\foreach \x in {0,5,6.67,8.33,10}
	\draw[shift={(\x,-3)},color=black] (0pt,3pt) -- (0pt,-3pt);
	\node[below] at (0,-3) {\footnotesize $\lambda=b$};
	\node[below] at (5,-3) {\footnotesize $\lambda=c$};
	\node[below] at (6.67,-3) {\footnotesize $\lambda=\infty$};
	\node[below] at (8.33,-3) {\footnotesize $\lambda=0$};
	\node[below] at (10,-3) {\footnotesize $\lambda=a$};
	\end{tikzpicture}
	\caption{Fomenko graph for the billiard in the collared $\Hyp$-ellipse. }
	\label{CollaredFomenko}
\end{figure}
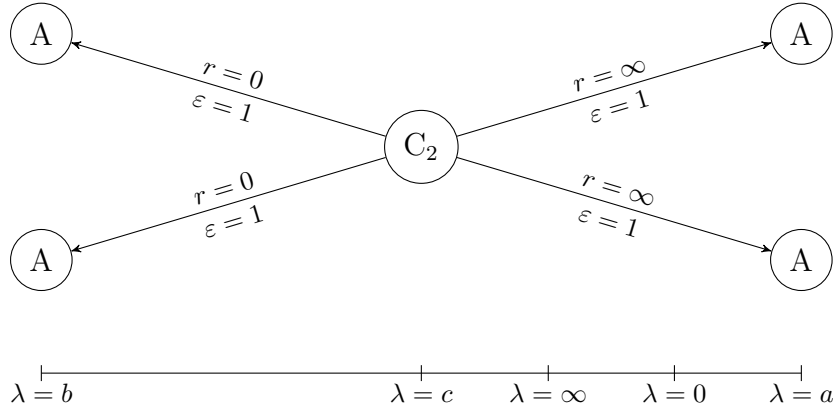

\begin{proof}
	Just as before, each level set of the manifold corresponds to billiard motion with a fixed confocal curve as a caustic, $\mathcal{C}_{\lambda}$ for $\lambda \in (-\infty, a] \cup [b,\infty) \cup \{\infty\}$. 
	
	When $\lambda \notin \{a,b,c\}$, the level sets are nondegenerate. The level set is a union of two tori when the caustic is hyperbolic-type -- that is, when $\lambda \in (b, c)$. There, each torus corresponds to a billiard trajectory in one of two regions, symmetric about the plane $z=0$, bounded by the confocal curves $\mathcal{C}_{\lambda}$ and the boundary $\mathcal{C}_0$. 

In the case $\lambda \in (c,+\infty)$, there are no confocal curves on $\Hyp$, as the intersection of the cone (\ref{ConfFam1}) with the hyperboloid is empty. However, since the corresponding geodesics are intersections of the planes tangent to the cone with $\Hyp$, we can still determine the corresponding billiard segments. In this case, the level set is the union of two tori, one for each direction a billiard trajectory can wind around $\mathcal{E}$.

At $\lambda =\infty$, the trajectories are light-like and each torus corresponds to a trajectory that winds around $\mathcal{E}$ in either the overhand or underhand direction. 
	
In the cases $\lambda \in (-\infty,0) \cup (0,a)$, the level set remains the union of two tori, one for each direction a billiard can wind around $\mathcal{E}$. Each has a different geometric interpretation. If $\lambda \in (-\infty,0)$, the caustic is elliptic-type and outside $\mathcal{E}$, while $\lambda \in (0,a)$ corresponds to motion with an elliptic-type caustic inside $\mathcal{E}$. This motion is remarkable in the following sense: every torus corresponding to $\lambda \in (0,a)$ is simply periodic geodesic flow of a space-like trajectory. The billiard motion does not reach the boundary, thus each trajectory there is closed, see Figure \ref{PeriodicResonant}. Thus, every torus in such level set is resonant.  In the limiting case $\lambda \to 0^\pm$, the level set is again the union of two resonant tori, again each corresponding to the direction a trajectory can wind around $\mathcal{E}$. 

\begin{figure}[ht]
	\centering
	\includegraphics[width=6.4cm,height=6.3cm]{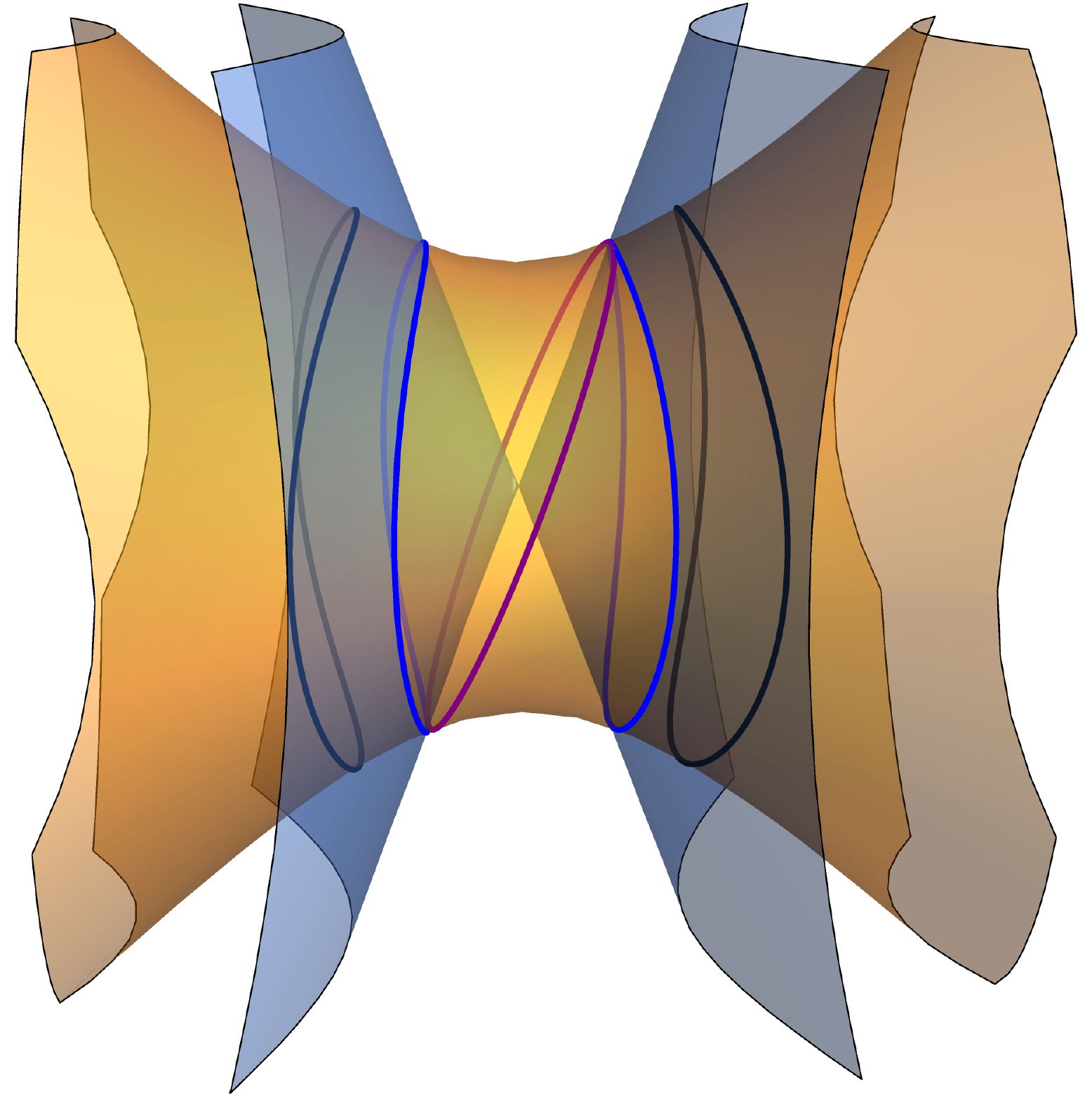}
	\caption{A periodic billiard trajectory tangent to a confocal curve without reflection from the boundary, $\mathcal{C}_0$. In the language of section \ref{sec:EllipticPeriodic}, this is a 1-elliptic periodic trajectory}
	\label{PeriodicResonant}
\end{figure}

Now consider the degenerate cases. If $\lambda =b$, the billiard is periodic and contained in the plane $y=0$, alternately reflecting off of each boundary component of $\mathcal{E}$. There will be two A atoms for this limiting motion, one for each of the two periodic trajectories. As $\lambda \to b^+$, the level set is the union of two tori, as described above for $\lambda \in (b,c)$.
	
If $\lambda = c$, the billiard is periodic and contained in the plane $z=0$ and the level set is degenerate. As $\lambda \to c^-$ the level set is the union of two tori, one for each region of the table where trajectories are tangent to a caustic curve of hyperbolic-type. As $\lambda \to c^+$, the level set is also a union of two tori, each corresponding to which direction the trajectory winds around $\mathcal{E}$. As there are four circles and four separatrices at this point, there will be a $\text{C}_2$ atom at $\lambda = c$.
	
If $\lambda =a$, the billiard is periodic, space-like, and wraps around the collar of $\Hyp$ in the plane $x=0$ and the level set is degenerate. Each winding direction of this periodic motion corresponds to each A atom. As $\lambda \to a^-$ the motion is the same as discussed above for $\lambda \in (0,a)$ and is the union of two tori. The level set is empty as $\lambda \to a^+$. 

Recall the notation of the gluing matrix and basis cycles on tori, $\ilvector{\lambda^+}{\mu^+} = \iltwobytwo{\alpha}{\beta}{\gamma}{\delta}\ilvector{\lambda^-}{\mu^-}$ \cites{BMF1990,BF2004}.  The gluing matrix for the two left edges is $A_L=\iltwobytwo{0}{1}{1}{0}$. The new basis cycles $(\lambda^+,\mu^+)$ correspond geometrically to the motion around the collar of $\mathcal{E}$ and the 2-periodic trajectory in the plane $y=0$, respectively. For the two right edges, the gluing matrix is $A_R=\iltwobytwo{1}{0}{0}{-1}$. Geometrically, the non-contractible basis cycle $\lambda^+$ corresponds to the circular periodic trajectory around the collar in the plane $x=0$ and the other basis cycle $\mu^+$ is complementary to $\lambda^+$.
\end{proof}

\begin{remark}\label{rem:type2}
In this case, the space-like trajectories correspond to the caustics $\mathcal{C}_{\lambda}$ with $\lambda < a$, while time-like to $\lambda>b$.
Compare with Remark \ref{rem:type}.
\end{remark}

\subsection{Remarks on equivalent systems}

We note that the Fomenko graph in Figure \ref{TransverseFomenko} for the case of billiards in the transverse $\Hyp$-elllipse is identical to the Fomenko graph corresponding to billiards inside an ellipse in the Minkowski plane \cite{DR2017}. Therefore, because their marked Fomenko graphs are identical, these two billiard systems are Liouville equivalent. 

In the case of the collared $\Hyp$-ellipse, the marked molecule is the same as a particular instance of the Chaplygin case of integrable rigid-body dynamics (see \cite{FN2015}). Therefore these integrable systems are Liouville equivalent.

\subsection{Billiard books}

A set of conjectures by Fomenko outlines a connection between integrable systems and elliptical billiards in the Euclidean plane. By considering domains that are subsets of the ellipse which are bounded by arcs of confocal hyperbolas and ellipses, it is conjectured that any integrable system can be represented by gluing copies of such domains together in a suitable fashion, and obtaining a generalized billiard domain called a \emph{billiard book}. See \cite{VK2018} for details. 

In the case of the transverse $\Hyp$-ellipse, the unmarked version of the Fomenko graph in Figure \ref{TransverseFomenko} is represented by a billiard book which is constructed by gluing together two copies of the billiard book representing the 3-atom B. See \S 2 of \cite{FKK2020} for a direct construction of this billiard book. 

In the case of the collared $\Hyp$-ellipse, the unmarked version of the Fomenko graph in Figure \ref{CollaredFomenko} is represented by a billiard book representing the 3-atom $\text{C}_2$.  One can follow Algorithm 1 of \cite{VK2018} to create such a billiard book, and the necessary details, using the notation from that work, are as follows. This book is made from four sheets, $A_0^\prime$. The permutations $\sigma_3 = \sigma_4 = \text{id}$, while $\sigma_1 =  (12)(34)$ and $\sigma_2 = (14)(23)$ define the gluing of the edges.

\begin{figure}[th]
	\centering
	\begin{tikzpicture}[>=stealth'] 
	
	\node [above] at (-1,1.5) {$(1)$};
	\node [below] at (0.5,0) {$(12)$};
	\node [above] at (2,1.5) {$(234)$};
	\node [below] at (2,-1.5) {$(34)$};
	\node [left] at (-0.25,0.75) {\footnotesize{1}};
	\node [left] at (1.20,0.75) {\footnotesize{2}};
	\node [left] at (1.85,-1.15) {\footnotesize{3}};
	\node [right] at (2.15,-1.15) {\footnotesize{4}};
	
	\draw[thick] (-1,1.5) -- (0.5,0);
	\draw[thick] (0.5,0) -- (2,1.5);
	\draw[thick] (2,1.5) to [bend left = 30] (2,-1.5);
	\draw[thick] (2,1.5) to [bend right = 30] (2,-1.5);
	
	\node [above] at (5,1.5) {$(14)$};
	\node [above] at (8,1.5) {$(23)$};
	\node [below] at (5,-1.5) {$(12)$};
	\node [below] at (8,-1.5) {$(34)$};
	\node [left] at (5,-0.75) {\footnotesize{1}};
	\node [left] at (6.25,-0.75) {\footnotesize{2}};
	\node [right] at (6.7,-0.75) {\footnotesize{3}};
	\node [right] at (8,-0.75) {\footnotesize{4}};
	
	\draw[thick] (5,1.5) -- (5,-1.5) -- (8,1.5) -- (8,-1.5) -- (5,1.5);
	
	\end{tikzpicture}
	\caption{A side view of the two billiard books corresponding to the Fomenko graphs of the transverse (left) and collared (right) $\Hyp$-ellipse. Edges are labeled with the numbering of the sheet while vertices are labeled with their permutations.}
	\label{BilliardBook}
\end{figure}
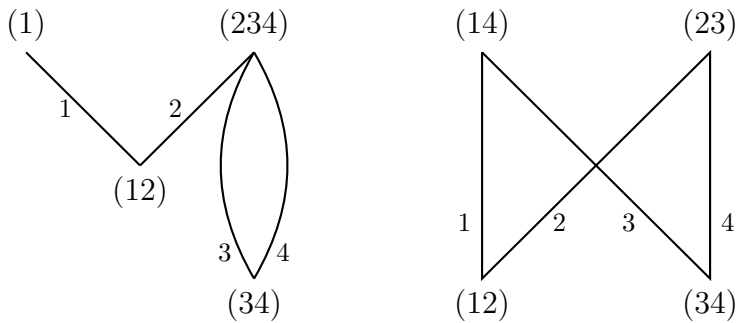

\section{Discriminantly factorizable and separable polynomials}
\label{sec:discriminantly-poly}

In the Euclidean plane \cite{DR2019a} and Minkowski plane \cite{ADR2019}, it is shown that the Cayley-type conditions contain a rich algebro-geometric structure related to discriminantly separable polynomials, which were introduced in \cite{Dr}.

\begin{definition}[\cite{Dr}]
	A polynomial $F(x_1, \ldots, x_n)$ is \emph{discriminantly separable} if there exist polynomials $f_1(x_1), \ldots, f_n(x_n)$ such that the discriminant $\mathcal{D}_{x_i}F$ of $F$ with respect to $x_i$ satisfies $$\mathcal{D}_{x_i}F(x_1, \ldots, \widehat{x_i}, \ldots, x_n) = \displaystyle\prod_{j \neq i} f_j(x_j)$$ for each $1 \leq i \leq n$. 
\end{definition}
 Now we examine the periodicity conditions of Theorem \ref{th:cayley} from that perspective, and note that the elliptic-periodicity conditions can be treated similarly. The Cayley-type conditions have numerators which are polynomials in the caustic parameter $\nu$ whose coefficients are given in terms of the variables $a,b,c$. 

For notational simplicity, in the formulas below we sometimes use  the elementary symmetric polynomials in three variables, $p := a + b + c,$ $q := ab + ac + bc,$ $r := abc$. 

\begin{example}[Period 3]\label{ex:period3}
	The condition $D_2=0$ is equivalent to finding roots of \begin{align*}
	G_3(a,b,c,\nu) &= 3(abc)^2 -2 abc\left(ab + bc + ac \right)\nu + \left( 4 a b c (a+b+c)-(a b+a c+b c)^2\right) \nu^2 \\
		& = 3r^2 - 2qr \nu + (4pr-q^2)\nu^2
	\end{align*} 
in $\nu$, and its discriminant with respect to $\nu$ is 
$$
	\begin{aligned}
	\mathcal{D}_\nu G_3 &= 2^4 (abc)^2 \left(a^2 b^2 + a^2 c^2 + b^2 c^2 - a b c (a + b + c)\right) \\
	&= 2^4 r^2 \left(q^2-3 p r\right).
	\end{aligned}
$$
\end{example}

\begin{example}[Period 4]
	Solving the equation $B_3 =0$ is equivalent to finding roots of $$G_4(a,b,c,\nu) = (\nu  (-a b+a c+b c)-a b c) (\nu  (a b+a c-b c)-a b c) (\nu  (a b-a c+b c)-a b c)$$ in $\nu$, and its discriminant with respect to $\nu$ is 
	\begin{align*}
	\mathcal{D}_\nu G_4 &= 2^6 (abc)^8 (a-b)^2 (a-c)^2 (b-c)^2 \\
	&= 64 r^8 \left(p^2 q^2-4 p^3 r+18 p q r-4 q^3-27 r^2\right).
	\end{align*}
	\label{Period4Example}
\end{example}

\begin{example}[Period 5]
	The formula $D_2D_4 - D_3^2 =0$ is equivalent to finding roots of 
	\begin{align*}
	G_5(a,b,c,\nu) &= 5 r^6  -10 q r^5\nu + r^4 \left(52 p r-9 q^2\right)\nu^2 + 4 r^3 \left(-36 p q r+9 q^3+56 r^2\right) \nu^3 \\
	& \qquad + r^2 \left(-16 r^2 \left(p^2+14 q\right)+120 p q^2 r-29 q^4\right)\nu^4  \\
	& \qquad + 2 r \left(16 q r^2 \left(q-p^2\right)-8 p q^3 r+64 p r^3+3 q^5\right)\nu^5 \\
	& \qquad  + (48 p^2 q^2 r^2-64 r^3 \left(p^3+4 r\right)-12 p q^4 r+128 p q r^3-32 q^3 r^2+q^6)\nu^6
	\end{align*}
	in $\nu$, and its discriminant with respect to $\nu$ is 
	\begin{align*}
	\mathcal{D}_\nu G_5 &= 2^{44}\cdot 5 \cdot r^{38} \left(p^2 q^2-4 p^3 r+18 p q r-4 q^3-27 r^2\right)^4 \\
	& \qquad \times \left(-889 p^2 q^2 r^2+r^3 \left(1369 p^3+4320 r\right)+243 p q^4 r-2880 p q r^3+640 q^3 r^2-27 q^6\right).
	\end{align*}
\end{example}

\begin{example}[Period 6]
	Finding the solutions of $B_3B_5 - B_4^2 =0$ is equivalent to finding roots of 
	\begin{align*}
	G_6(a,b,c,\nu) &=\left[\left(-3 a^2 b^2+c^2 (a-b)^2+2 a b c (a+b)\right)\nu ^2 +2 a b c (a b-ac-bc) \nu+ (abc)^2\right] \\
	&\times \left[(-a^2 (b-c)^2+2 a b c (b+c)-b^2 c^2)\nu^2 - 2abc(ab + ac + bc)\nu + 3(abc)^2\right] \\
	&\times \left[(a^2 (b-c)^2+2 a b c (b+c)-3 b^2 c^2)\nu^2 + 2abc (-ab -ac +bc )\nu + (abc)^2\right] \\
	&\times \left[(a^2 (b-c) (b+3 c)+2 a b c (c-b)+b^2 c^2)\nu^2 + 2abc(-ab + ac - bc)\nu + (abc)^2 \right]
	\end{align*}
	in $\nu$, and its discriminant with respect to $\nu$ is 
	\begin{align*}
	\mathcal{D}_\nu G_6 &= -2^{88}(abc)^{74}(a-b)^{18} (a-c)^{18} (b-c)^{18} \left(a^2 b^2 + a^2 c^2 + b^2 c^2 - a b c (a + b + c)\right) \\
	&= 2^{88} r^{74} \left(q^2 - 3 p r\right) \left(-p^2 q^2+4 p^3 r-18 p q r+4 q^3+27 r^2\right)^9.
	\end{align*}
\end{example}

\begin{example}[Period 7]\label{ex:period7}
	The condition $$\det \threebythree{D_2}{D_3}{D_4}{D_3}{D_4}{D_5}{D_4}{D_5}{D_6}=0$$ is equivalent to finding roots of a polynomial $G_7(a,b,c,\nu)$
	in $\nu$ of degree 12.
	The discriminant of that polynomial with respect to $\nu$ is 
\begin{align*}
	\mathcal{D}_\nu G_7 &= -2^{184}\cdot 7^2 \cdot r^{172} \left(p^2 q^2-4 p^3 r+18 p q r-4 q^3-27 r^2\right)^{20}  \\
	&\times \left[13884993 p^2 q^8 r^2-4 q^6 r^3 \left(19497321 p^3+36960632 r\right)-633232064 p^2 q^5 r^4 \right. \\
	& \qquad + \left. p q^4 r^4 \left(254629897 p^3+1330582752 r\right)+64 q^3 r^5 \left(17805509 p^3-16979328 r\right) \right.\\ 
	& \qquad  -2 p^2 q^2 r^5 \left(209755567 p^3+1588370256 r\right)-576 p q r^6 \left(846895 p^3-8489664 r\right) \\
	& \qquad + r^6 \left(731717280 p^3 r+250406527 p^6-3667534848 r^2\right)+134695872 p q^7 r^3 \\
	& \qquad \left. -1518750 p q^{10} r-9977472 q^9 r^2+84375 q^{12}\right].
\end{align*} 
\end{example}

Each of the polynomials $G_i(a,b,c,\nu)$ from Examples \ref{ex:period3}--\ref{ex:period7} is discriminantly factorizable. But in contrast to the examples in \cite{ADR2019}, there is no obvious variable change that leads to discriminantly separable polynomials for the above examples. However, some are \emph{nearly} discriminantly separable in the variables $a$, $d = b/a$, and $e=c/a$. For example, $$\mathcal{D}_\nu G_4 = 64 a^{30} (d-1)^2 d^8 (e-1)^2 e^8 (d-e)^2,$$ where $(d-e)^2$ is the disqualifying factor. Similar calculations with the same variable change can be made that lead to expressions that are a product of polynomials in the form $$\mathcal{D}_\nu G_i (a,b,c,\nu) = f_1(a)f_2(d)f_3(e)f_4(d,e).$$

Another possible variable change is informed by the similarity of the polynomial $G_3$ and $G_2(a,b,\gamma)$ in \cite{ADR2019}. In terms of the elementary symmetric polynomials $p$, $q$, and $r$, first apply the transformation $(p,q,r) \mapsto (AB, A+B,1)$. This produces $$\mathcal{D}_\nu G_3(A,B) = 2^4((A+B)^2 - 3AB). $$ Applying one more transformation $(A,B) \mapsto(A,C := B/A)$ produces a discriminantly separable polynomial $$\mathcal{D}_\nu G_3(A,C) = 2^4 A^2 (1-C+C^2). $$ However, this double variable change does not produce discriminantly separable polynomials for any of the other examples computed above. For example, this double variable change applied to Example \ref{Period4Example} results in  
$$\mathcal{D}_\nu G_4 = 2^6 (A^6 (-1 + C)^2 C^2 - A^3 (4 - 6 C - 6 C^2 + 4 C^3)-27),$$ which is discriminantly factorizable but not discriminantly separable.

\section{Periodic trajectories and extremal polynomials}
\label{sec:extremal}

\subsection{Polynomial equations as periodicity conditions}
We can formulate the periodicity conditions of Theorem \ref{CayleyThm} in terms of the existence of solutions to certain polynomial equations. 

\begin{theorem}
	The billiard trajectories in the collared and transverse $\Hyp$-ellipses with caustic $\mathcal{C}_\nu$ are $n$-periodic if and only if there exists a pair of polynomials $p_{d_1} = e_{d_1} x^{d_1} + \cdots$ and  $q_{d_2} = f_{d_2} x^{d_2}+ \cdots$ of degrees $d_1$, $d_2$ respectively, with $k = e_{d_1}^2 - \ep f_{d_2}^2$, such that 
	\begin{enumerate}[a)]
		\item if $n=2m$, then $d_1=m$, $d_2 = m-2$, and \\ 
		\begin{equation}\label{PolyEqEven} p_m^2(s) - \left(\frac{1}{a}-s\right) \left(\frac{1}{b}-s\right) \left(\frac{1}{c}-s\right) \left(\frac{1}{\nu}-s\right) q_{m-2}^2(s) = \sign{k}; \end{equation} \\
		\item if $n=2m+1$, then $d_1=m$, $d_2 = m-1$, and \\
		\begin{equation}\label{PolyEqOdd} \left( \frac{1}{\nu}-s \right) p_m^2(s) - \left(\frac{1}{a}-s\right) \left(\frac{1}{b}-s\right) \left(\frac{1}{c}-s\right) q_{m-1}^2(s) = \sign(k\nu). \end{equation}
	\end{enumerate}
\end{theorem}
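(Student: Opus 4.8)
The plan is to translate the Cayley-type determinantal periodicity conditions of Theorem~\ref{CayleyThm} into the language of functional Pell equations and then into the existence of the extremal polynomial pair $(p_{d_1}, q_{d_2})$. The key observation is that the determinant conditions in Theorem~\ref{CayleyThm} encode precisely the vanishing of certain Taylor coefficients of the square-root expansion of $\mathcal{P}(X) = \ep(X-a)(X-b)(X-c)(X-\nu)$ (or the related rational function) around $X=0$. This vanishing is classically equivalent to the statement that the hyperelliptic integral $\int \frac{dX}{\sqrt{\mathcal{P}(X)}}$ has a divisor relation of the form $n(Q_- - Q_+) \sim 0$, which in turn is equivalent to the solvability of a functional Pell equation $p^2 - \mathcal{P}\, q^2 = \const$ for polynomials $p, q$. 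This is the standard bridge, used in \cite{DR2019b}, between the Cayley conditions and extremal polynomials.

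First I would make the substitution $s = 1/X$ (or equivalently work with the reciprocal polynomial), which converts the expansion point $X=0$ into the point at infinity and replaces the factors $(X-a)$ etc.\ by $(\tfrac{1}{a}-s)$ up to scaling, so that the quartic under the square root becomes $\left(\tfrac1a-s\right)\left(\tfrac1b-s\right)\left(\tfrac1c-s\right)\left(\tfrac1\nu-s\right)$ as it appears in \eqref{PolyEqEven}. I expect this change of variable to be the organizing step that aligns the Taylor-coefficient data of Theorem~\ref{CayleyThm} with the polynomial Pell equation written in the variable $s$. The sign $\ep = \sign(b\nu)$ must be tracked carefully through this substitution, since it determines whether the quartic is the relevant radicand or whether a factor must be moved, which is what distinguishes the even case \eqref{PolyEqEven} from the odd case \eqref{PolyEqOdd}.

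Next I would treat the two parities separately. In the even case $n=2m$, the divisor condition is $m(Q_- - Q_+) \sim 0$ on the genus-one curve, and the Riemann--Roch / Pell correspondence gives a polynomial $p_m$ of degree $m$ and a polynomial $q_{m-2}$ of degree $m-2$ satisfying $p_m^2 - \mathcal{Q}\, q_{m-2}^2 = \const$, where $\mathcal{Q}$ is the quartic; the degree count $\deg p_m^2 = 2m = \deg(\mathcal{Q} q_{m-2}^2)$ forces the leading terms to cancel, leaving a nonzero constant whose sign is $\sign k$ with $k = e_{d_1}^2 - \ep f_{d_2}^2$. In the odd case $n=2m+1$, the relevant divisor relation involves one of the branch points (an odd multiple of a half-period), so one factor of the quartic is absorbed onto the left-hand side, yielding the cubic radicand $\left(\tfrac1a-s\right)\left(\tfrac1b-s\right)\left(\tfrac1c-s\right)$ multiplied against $q_{m-1}^2$ and the linear factor $\left(\tfrac1\nu-s\right)$ multiplying $p_m^2$, as in \eqref{PolyEqOdd}; here the degree bookkeeping gives $\deg p_m = m$, $\deg q_{m-1} = m-1$, and the right-hand constant acquires the extra sign $\sign(k\nu)$.

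The main obstacle I anticipate is the careful verification that the determinantal vanishing conditions of Theorem~\ref{CayleyThm} are \emph{exactly} equivalent to the existence of the Pell pair, rather than merely implied by it, together with the precise determination of the constant's sign in each case. Establishing the equivalence requires identifying the space $\mathcal{L}(mQ_+)$ (or its branch-point-shifted analogue) with the span of the functions $\{1, f_1, \ldots, f_m\}$ built from the truncated square-root expansions, exactly as in the proof of the preceding theorem, and showing that the Hankel-type determinant is the obstruction to a nontrivial element of this space having the prescribed high-order zero at $Q_-$. Pinning down $\sign k$ and $\sign(k\nu)$ demands tracking the leading coefficients through the reciprocal substitution and through the sign $\ep$; this is where the two geometric cases (collared versus transverse $\Hyp$-ellipse, distinguished by the range of $\nu$ and hence by $\ep = \sign(b\nu)$) must be handled in a unified way so that a single pair of equations \eqref{PolyEqEven}--\eqref{PolyEqOdd} covers both.
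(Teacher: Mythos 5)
Your proposal follows essentially the same route as the paper: it starts from the Cayley/divisor conditions to obtain polynomials $p^*,q^*$ such that $p^*-q^*\sqrt{\ep(X-a)(X-b)(X-c)(X-\nu)}$ (resp.\ the cubic-over-linear radicand in the odd case) vanishes to order $n$ at $X=0$, multiplies by the algebraic conjugate to produce $kX^{n}$, and then applies the reciprocal substitution $s=1/X$ with a rescaling of the polynomials to normalize the constant to $\sign k$ or $\sign(k\nu)$. This matches the paper's proof step for step, so no further comparison is needed.
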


\begin{proof}
	Note that the proofs of lemma 5.4 and theorem 5.6 in \cite{GaR} together imply the existence of a nontrivial linear combination of the bases for even and odd period $n$ with a zero of order $n$ at $X=0$. 
	
	First consider case $n=2m$. There are real polynomials $p_m^*(X)$ and $q_{m-2}^*(X)$ of degrees $m$ and $m-2$, respectively, such that the expression
	$$ p_m^*(X) - q_{m-2}^*(X)\sqrt{\ep(X-a)(X-b)(X-c)(X-\nu)}$$
	has a zero of order $2m$ at $X=0$. Multiplying this expression by its algebraic conjugate $$p_m^*(X) + q_{m-2}^*(X)\sqrt{\ep(X-a)(X-b)(X-c)(X-\nu)},$$ we arrive at a polynomial of degree $2m$ of the form 
	$$ \left[p_m^*(X)\right]^2 - \ep\left[q_{m-2}^*(X)\right]^2 (X-a)(X-b)(X-c)(X-\nu)$$ which has a zero of order $2m$ at $X=0$. It follows that 
	$$ \left[p_m^*(X)\right]^2 - \ep\left[q_{m-2}^*(X)\right]^2 (X-a)(X-b)(X-c)(X-\nu) = k X^{2m}$$ for some nonzero constant $k$. Using the property that $x = |x|\sign{x}$ and dividing both sides of this equation by $|k| X^{2m}$, we get
	$$ \frac{\left[p_m^*(X)\right]^2}{|k| X^{2m}} - \frac{\ep\left[q_{m-2}^*(X)\right]^2 (X-a)(X-b)(X-c)(X-\nu)}{|k| X^{2m}}=\sign{k}.$$
	Let $s=1/X$ and define $$p_m(s) = \frac{s^m p_m^*(1/s)}{\sqrt{|k|}},\qquad q_{m-2}(s) = \frac{s^{m-2}q_{m-2}^*(1/s)\sqrt{ac|b\nu|}}{\sqrt{|k|}}.$$ Then these polynomials $p_m(s)$, $q_{m-2}(s)$ satisfy equation (\ref{PolyEqEven}), proving part (a) above. 
	
	For the case $n=2m+1$ there are polynomials $p_m^*(X)$, $q_{m-1}^*(X)$ of degree $m$ and $m-1$, respectively, such that the expression 
	$$ p_m^*(X) - q_{m-1}^*(X)\sqrt{\frac{\ep(X-a)(X-b)(X-c)}{(X-\nu)}}$$
	has a zero of order $2m+1$ at $X=0$. Multiplying by
	$$(X-\nu) \left(p_m^*(X) + q_{m-1}^*(X)\sqrt{\frac{\ep(X-a)(X-b)(X-c)}{(X-\nu)}} \right)$$
	we get a polynomial of the form
	$$(X-\nu)\left[p_m^*(X) \right]^2 - \ep\left[q_{m-1}^*(X)\right]^2(X-a)(X-b)(X-c),$$
	which has a zero of order $2m+1$ at $X=0$. Since the degree of this expression is $2m+1$, it follows that
	$$(X-\nu)\left[p_m^*(X) \right]^2 - \ep\left[q_{m-1}^*(X)\right]^2(X-a)(X-b)(X-c) = k X^{2m+1}$$ for some nonzero constant $k$. Again rewriting $k = |k|\sign{k}$ and dividing both sides by $|k| X^{2m+1}$ we get
	$$\frac{(X-\nu)\left[p_m^*(X) \right]^2}{|k| X^{2m+1}} - \frac{\ep\left[q_{m-1}^*(X)\right]^2(X-a)(X-b)(X-c)}{|k| X^{2m+1}} = \sign{k}.$$
	Again let $s=1/X$ and define 
	$$p_m(s) = \frac{s^m p_m^*(1/s)\sqrt{|\nu|}}{\sqrt{|k|}},\qquad q_{m-1}(s) = \frac{s^{m-1}q_{m-1}^*(1/s)\sqrt{a|b|c}}{\sqrt{|k|}}.$$
	Then these polynomials $p_m(s)$ and $q_{m-1}(s)$ satisfy equation (\ref{PolyEqOdd}) above, proving part (b). 
\end{proof}

\begin{corollary}\label{cor:periodicPell}
	If the billiard trajectories inside the collard and transverse $\Hyp$-ellipses are $n$-periodic with caustic $\mathcal{C}_\nu$, then there exist real polynomials $\widehat{p}_n$ and $\widehat{q}_{n-2}$ of degrees $n$ and $n-2$, respectively, which satisfy the Pell equation
	\begin{equation}\label{PellEq}
	\widehat{p}_n(s)^2 - \left(\frac{1}{a}-s\right) \left(\frac{1}{b}-s\right) \left(\frac{1}{c}-s\right) \left(\frac{1}{\nu}-s\right) \widehat{q}_{n-2}(s)^2 =1.
	\end{equation}
\end{corollary}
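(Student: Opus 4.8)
The plan is to start from the previous theorem, which already supplies — for an $n$-periodic trajectory — a pair of polynomials satisfying a \emph{quasi-Pell} relation whose right-hand side is only $\pm 1$ rather than $+1$, and whose degrees are roughly half of the target degrees $n$ and $n-2$. I would upgrade such a solution to a genuine Pell pair by working with the quadratic extension generated by $\sqrt{R}$, where $R(s) = \left(\frac1a - s\right)\left(\frac1b - s\right)\left(\frac1c - s\right)\left(\frac1\nu - s\right)$, and by forming an appropriate square of the quasi-unit produced by the theorem. Squaring simultaneously fixes the sign on the right-hand side, since $(\pm 1)^2 = 1$, and doubles the degrees to exactly $n$ and $n-2$.

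First I would treat the even case $n = 2m$. The theorem gives $p_m, q_{m-2}$ satisfying $(\ref{PolyEqEven})$, i.e.\ $p_m^2 - R\,q_{m-2}^2 = \sign k$. I set $\widehat p_n = p_m^2 + R\,q_{m-2}^2$ and $\widehat q_{n-2} = 2 p_m q_{m-2}$, which is just the expansion of $(p_m + q_{m-2}\sqrt R)^2 = \widehat p_n + \widehat q_{n-2}\sqrt R$. A direct computation then gives $\widehat p_n^2 - R\,\widehat q_{n-2}^2 = (p_m^2 - R\,q_{m-2}^2)^2 = (\sign k)^2 = 1$. Since $R$ is monic of degree $4$ while $p_m$ and $q_{m-2}$ have exact degrees $m$ and $m-2$, the leading terms of $p_m^2$ and $R\,q_{m-2}^2$ are both positive multiples of $s^{2m}$, so no cancellation occurs and $\widehat p_n$ has degree exactly $2m = n$; likewise $\widehat q_{n-2}$ has degree $2m-2 = n-2$.

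For the odd case $n = 2m+1$, the theorem instead gives $p_m, q_{m-1}$ satisfying $(\ref{PolyEqOdd})$, namely $N p_m^2 - T q_{m-1}^2 = \sign(k\nu)$, where I abbreviate $N(s) = \frac1\nu - s$ and $T(s) = \left(\frac1a - s\right)\left(\frac1b - s\right)\left(\frac1c - s\right)$, so that $R = T N$. The quasi-unit here is $\sqrt N\,p_m + \sqrt T\,q_{m-1}$, which mixes two different radicals; the key observation is that its square returns to the extension generated by $\sqrt R$ because $\sqrt T\,\sqrt N = \sqrt R$. Accordingly I set $\widehat p_n = N p_m^2 + T q_{m-1}^2$ and $\widehat q_{n-2} = 2 p_m q_{m-1}$, and a direct expansion yields $\widehat p_n^2 - R\,\widehat q_{n-2}^2 = (N p_m^2 - T q_{m-1}^2)^2 = 1$. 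Counting leading terms as before — now $N$ and $T$ have leading terms $-s$ and $-s^3$ — shows that $\widehat p_n$ has degree $2m+1 = n$ and $\widehat q_{n-2}$ has degree $2m-1 = n-2$.

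I expect the only genuine subtlety to lie in the odd case: recognizing that the natural object furnished by the theorem is a product of two \emph{distinct} square roots, and that exactly the recombination $\widehat p_n = N p_m^2 + T q_{m-1}^2$, $\widehat q_{n-2} = 2 p_m q_{m-1}$ collapses the cross term into $\sqrt R$ and turns the norm into the perfect square $(N p_m^2 - T q_{m-1}^2)^2$. The remaining points — that the right-hand side becomes $+1$ rather than $\pm 1$, and that the degrees come out exactly $n$ and $n-2$ with no leading-coefficient cancellation — follow at once from the squaring construction together with the monicity and degree bookkeeping for $R$, $T$, and $N$.
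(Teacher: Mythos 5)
Your construction is correct and is essentially the paper's own proof: squaring the quasi-unit $(p_m+q\sqrt{R})$ gives exactly the paper's polynomials, since the quasi-Pell relation turns your $\widehat p_n=p_m^2+R\,q_{m-2}^2$ into $2p_m^2-\sign k$ (and likewise $N p_m^2+T q_{m-1}^2=2\left(\frac1\nu-s\right)p_m^2-\sign(k\nu)$ in the odd case), while $\widehat q_{n-2}=2p_mq$ coincides verbatim. The degree bookkeeping you add is a harmless elaboration of what the paper leaves implicit.
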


\begin{proof}
	For $n=2m$, write $\widehat{p}_n = 2p_m^2-\sign{k}$ and $\widehat{q}_{n-2} = 2p_m q_{m-2}$. And for $n=2m+1$, write $\widehat{p}_n = 2\left(\frac{1}{\nu}-s \right) p_m^2-\sign{k\nu}$ and $\widehat{q}_{n-2} = 2p_m q_{m-1}.$
\end{proof}

The Pell-type equations above arise as a functional polynomial condition for periodicity. These solutions of Pell equations have further connections to geometric properties of the billiard trajectories. One can compare these results with \cite{ADR2019}.

\subsection{Rotation numbers}
\label{RotNum}

Suppose $c_0<c_1<c_2<c_3$ are given constants and:
$$
T(s)=(s-c_0)(s-c_1)(s-c_2)(s-c_3).
$$
Then, there exist polynomials $\hat{p}_n$ and $\hat{q}_{n-2}$ of degrees $n$ and $n-2$ respectively such that
$$
\hat{p}_n^2(s)-T(s)\hat{q}_{n-2}^2(s)=1
$$
if and only if there is an integer $n_1>0$ such that
$$
n_1\int_{c_1}^{c_2}\frac{ds}{\sqrt{T(s)}}
=
n\int_{c_3}^{+\infty}\frac{ds}{\sqrt{T(s)}}.
$$
Here $n_1$ is the number of zeroes of $\hat{p}_n$ in $(c_0,c_1)$, see \cite{KLN1990}. Thus, we can define the rotation number as:
$$
\rho
:=
\frac{n_1}{n}
=
\frac{\int_{c_3}^{+\infty}\frac{ds}{\sqrt{T(s)}}}
{\int_{c_1}^{c_2}\frac{ds}{\sqrt{T(s)}}}.
$$
\begin{lemma}\label{lemma:rotation}
	In the above notation, the following relations take place:
	$$0<n_1<n, \quad
	0<\rho<1.
	$$
\end{lemma}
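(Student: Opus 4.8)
The plan is to read everything off the integral representation of the rotation number that the lemma itself supplies, namely
$$
\rho=\frac{\displaystyle\int_{c_3}^{+\infty}\frac{ds}{\sqrt{T(s)}}}{\displaystyle\int_{c_1}^{c_2}\frac{ds}{\sqrt{T(s)}}}.$$
Since $T(s)=(s-c_0)(s-c_1)(s-c_2)(s-c_3)$ with $c_0<c_1<c_2<c_3$, the two integrands appearing in the numerator and denominator are both real and strictly positive on the open intervals of integration: on $(c_1,c_2)$ the factors $(s-c_0)$ and $(s-c_3)$ have fixed signs $+$ and $-$, while $(s-c_1)$ and $(s-c_2)$ have signs $+$ and $-$, so the product $T(s)$ is positive there; on $(c_3,+\infty)$ all four factors are positive. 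Thus both integrals are well defined, finite (the singularities at the endpoints $c_1,c_2,c_3$ are of inverse-square-root type and hence integrable, and the integral over $(c_3,+\infty)$ converges because the integrand decays like $s^{-2}$), and strictly positive. Consequently $\rho>0$ is immediate, and this settles the lower bounds $0<\rho$ and, via $n_1=\rho n$ together with $n>0$, also $0<n_1$.

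For the upper bound $\rho<1$, equivalently $n_1<n$, I would compare the two complete integrals directly. First I would observe that
$$
\int_{c_3}^{+\infty}\frac{ds}{\sqrt{T(s)}}<\int_{c_1}^{c_2}\frac{ds}{\sqrt{T(s)}}.
$$
The cleanest route is a period-comparison argument on the elliptic curve $w^2=T(s)$: the two integrals are half-periods of the holomorphic differential $ds/\sqrt{T(s)}$ over the two bounded/unbounded complementary arcs, and for a real quartic with four real roots the real period coming from the outer interval is strictly smaller than the one coming from the inner interval $(c_1,c_2)$. If a self-contained estimate is preferred, I would instead bound the integrands pointwise after a substitution that maps $(c_3,+\infty)$ into a subinterval of $(c_1,c_2)$; for instance the Möbius change of variable fixing the real root structure sends the outer integral to an integral over a proper subinterval of $(c_1,c_2)$ with an integrand dominated by the one on $(c_1,c_2)$, yielding the strict inequality.

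The key steps, in order, are: (1) verify positivity of $T$ on each interval and convergence of both integrals, giving $\rho>0$; (2) use the stated relation $n_1=\rho n$ from the reference \cite{KLN1990} and $n>0$ to deduce $0<n_1$; (3) prove the strict inequality between the two complete integrals to obtain $\rho<1$; (4) translate back via $n_1=\rho n$ to conclude $n_1<n$. The main obstacle I expect is step (3): establishing the \emph{strict} inequality between the outer and inner periods rigorously, rather than merely heuristically. The sign and convergence bookkeeping in steps (1)--(2) is routine, but producing a clean comparison of the two elliptic half-periods — either by invoking the standard ordering of real periods of a real quartic or by exhibiting an explicit measure-decreasing substitution — is where the actual content lies, and I would be careful to ensure the comparison is strict (not just $\le$) by checking that the dominating substitution is not measure-preserving anywhere on a set of positive measure.
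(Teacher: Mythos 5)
The paper states Lemma~\ref{lemma:rotation} without any proof, so there is no argument of the authors to compare against; judged on its own, your proposal gets the easy half right and leaves the hard half unproved. Step (1) is fine: $T>0$ on $(c_1,c_2)$ and on $(c_3,+\infty)$, both integrals converge (inverse-square-root singularities at the finite endpoints, $s^{-2}$ decay at infinity) and are strictly positive, so $\rho>0$ and hence $n_1>0$. The genuine gap is your step (3). The inequality $\int_{c_3}^{+\infty}ds/\sqrt{T}<\int_{c_1}^{c_2}ds/\sqrt{T}$ \emph{is} the statement $\rho<1$, and you dispose of it by appealing to ``the standard ordering of real periods of a real quartic'' --- which is not a citable standard fact --- or to an unspecified M\"obius substitution. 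You correctly flag this as ``where the actual content lies,'' but then do not supply that content, so as written the lemma is asserted rather than proved. Your description of the substitution route is also slightly off: the relevant M\"obius map is the one inducing an automorphism of the curve $w^2=T(s)$ swapping the two branch cuts; it pulls $ds/\sqrt{T(s)}$ back to itself \emph{exactly} (not to a dominated integrand) and sends $(c_3,+\infty]$ onto a proper subinterval of $(c_1,c_2)$ --- the strictness comes from the shrunken domain, not from a pointwise bound.

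The gap closes with an identity rather than an estimate. On the elliptic curve $w^2=T(s)$ consider the cycle $\gamma_1$ lying over the bounded gap $[c_1,c_2]$ and the cycle $\gamma_2$ lying over the gap through infinity, i.e.\ over $[c_3,+\infty]\cup[-\infty,c_0]$ (the point at infinity is not a branch point of a quartic, so this arc closes up). These two cycles are disjoint, hence have intersection number $0$, and each crosses the cycle around the cut $[c_0,c_1]$ exactly once; on a genus-one surface this forces $\gamma_2\sim\pm\gamma_1$, and comparing the signs of the (positive) real integrals gives
\begin{equation*}
\int_{c_1}^{c_2}\frac{ds}{\sqrt{T(s)}}
=\int_{-\infty}^{c_0}\frac{ds}{\sqrt{T(s)}}+\int_{c_3}^{+\infty}\frac{ds}{\sqrt{T(s)}} .
\end{equation*}
Since the first integral on the right is strictly positive, $\rho<1$ follows at once. (Sanity check: for $c_i=0,1,2,3$ the left side is $\tfrac43K(1/3)$ and each term on the right is $\tfrac23K(1/3)$.) Alternatively, and closer to the source the paper cites, one can argue on the polynomial side: $\hat p_n^2-1=T\hat q_{n-2}^2\le 0$ exactly on $E=[c_0,c_1]\cup[c_2,c_3]$, so all $n$ zeros of $\hat p_n$ lie in $E$ and each of the two nondegenerate bands carries a positive number of them ($n_1/n$ is the harmonic measure of $[c_0,c_1]$, see \cite{KLN1990}); this yields $0<n_1<n$ directly and $0<\rho<1$ via $\rho=n_1/n$.
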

We will consider the case when the boundary is a collared $\mathcal{H}$-ellipse.
In this case, there are three possibilities for types of trajectories.
\begin{itemize}
    \item [(i)] \emph{The caustic is of elliptic type outside of $\mathcal{E}$ and the billiard is within $\mathcal{E}$.} Then $\nu < 0$ and $(\lambda_1,\lambda_2) \in [0,a]\times [b,c]$. 
The condition for $n$-periodicity is:
$$
m_0\int_0^{a}\frac{d\lambda}{\sqrt{\mathcal{P}(\lambda)}}
+
m_1\int_{b}^{c}\frac{d\lambda}{\sqrt{\mathcal{P}(\lambda)}}
=0.
$$
We make the change $s=1/\lambda$, set $c_0=1/\nu$, $c_1=1/c$, $c_2=1/b$,  $c_3=1/a$ and get:
$$
m_0\int_{\infty}^{c_3}\frac{ds}{\sqrt{T(s)}}
+
m_1\int_{c_2}^{c_1}\frac{ds}{\sqrt{T(s)}}
=0.
$$
From
$$
-m_1\int_{c_1}^{c_2}\frac{ds}{\sqrt{T(s)}}=m_0\int_{c_3}^{\infty}\frac{ds}{\sqrt{T(s)}}
$$
and Lemma \ref{lemma:rotation} we conclude:
$$
m_1<0,\, -m_1=n_1<m_0=n,\, \rho = -\frac{m_1}{m_0}.
$$
\item[(ii)] \emph{The caustic is of elliptic type outside of $\mathcal{E}$ and the billiard is outside of $\mathcal{E}$.}
Then in accordance with the discussion in section 3 of \cite{GaR}, the billiard trajectories are space-like and all reflect off of one component of $\mathcal{E}$. Then $\nu<0$ and $(\lambda_1,\lambda_2) \in [\nu, 0]\times[b,c]$. The billiard moves between the one component of $\mathcal{E}$ and the caustic, will not cross the coordinate plane $x_0=0$, but must cross the coordinate planes $x_1=0$ and $x_2=0$ an even number of times. This is the only case which could have an odd period. The condition for $n$-periodicity is:
$$
m_2\int_0^{\nu}\frac{d\lambda}{\sqrt{\mathcal{P}(\lambda)}}
+
m_3\int_{b}^{c}\frac{d\lambda}{\sqrt{\mathcal{P}(\lambda)}}
=0.
$$  
We add and subtract 
$$m_2\int_a^{0}\frac{d\lambda}{\sqrt{\mathcal{P}(\lambda)}}$$
and get
$$
m_2\int_a^{\nu}\frac{d\lambda}{\sqrt{\mathcal{P}(\lambda)}}
+
m_3\int_{b}^{c}\frac{d\lambda}{\sqrt{\mathcal{P}(\lambda)}}
=m_2\int_a^{0}\frac{d\lambda}{\sqrt{\mathcal{P}(\lambda)}}.
$$
Since cycles around $[\nu, a]$ and $[b, c]$ are homologous,
we get
$$
(m_3-m_2)\int_{b}^{c}\frac{d\lambda}{\sqrt{\mathcal{P}(\lambda)}}
=m_2\int_a^{0}\frac{d\lambda}{\sqrt{\mathcal{P}(\lambda)}}.
$$
We make the change $s=1/\lambda$, set $c_0=1/\nu$, $c_1=1/c$, $c_2=1/b$,  $c_3=1/a$ and get:
$$
(m_2-m_3)\int_{c_1}^{c_2}\frac{ds}{\sqrt{T(s)}}=m_2\int_{c_3}^{\infty}\frac{ds}{\sqrt{T(s)}}.
$$
From Lemma \ref{lemma:rotation} we conclude:
$$
m_3<m_2,\, m_2-m_3=n_1<m_2=n,\, \rho = 1-\frac{m_3}{m_2}.
$$
\item[(iii)] \emph{The caustic is of hyperbolic type and the billiard is inside $\mathcal{E}$.}
Then the caustic is symmetric about the plane $x_2=0$ and $\nu \in [b, c]$, so that $(\lambda_1,\lambda_2)\in [0,a]\times[b,\nu]$. The trajectory must become tangent to the caustic at some point inside $\mathcal{E}$. The condition for $n$-periodicity is:
$$
m_4\int_0^{a}\frac{d\lambda}{\sqrt{\mathcal{P}(\lambda)}}
+
m_5\int_{b}^{\nu}\frac{d\lambda}{\sqrt{\mathcal{P}(\lambda)}}
=0.
$$
We make the change $s=1/\lambda$, set $c_0=1/c$, $c_1=1/\nu$, $c_2=1/b$,  $c_3=1/a$ and get:
$$
m_4\int_{\infty}^{c_3}\frac{ds}{\sqrt{T(s)}}
+
m_5\int_{c_2}^{c_1}\frac{ds}{\sqrt{T(s)}}
=0.
$$
From
$$
-m_5\int_{c_1}^{c_2}\frac{ds}{\sqrt{T(s)}}=m_4\int_{c_3}^{\infty}\frac{ds}{\sqrt{T(s)}}
$$
and Lemma \ref{lemma:rotation} we conclude:
$$
m_5<0,\, -m_5=n_1<m_4=n,\, \rho = -\frac{m_5}{m_4}.
$$
\end{itemize}

\subsection{Zolotarev polynomials  and periodic trajectories}

It is well known that the Chebyshev polynomials can be defined recursively  by $T_0(x)=1$, $T_1(x) =x$, and $$T_{n+1}(x) + T_{n-1}(x) = 2x T_n(x)$$ for $n=1,2,\ldots.$ There are other parametrizations like 
\begin{equation}\label{eq:cheb2}
 T_n(x)=\cos n\phi,\quad x=\cos\phi,
\end{equation} 
see e.g.~\cite{AK90}. The reciprocal values of the leading coefficients are $L_n=2^{n-1}$ for $n=1, 2, \dots$ and $L_0=1$. Chebyshev proved that the polynomials $L_nT_n(x)$ are the solutions of the following minmax problem: \emph{find the monic polynomial of degree $n$ which minimizes the uniform norm on the interval $[-1,1]$}. 

As also shown by Chebyshev, the Chebyshev polynomials satisfy the polynomial Pell equation, i.e. there exist
polynomial $Q_{n-1}$ of degree $n-1$ such that
\begin{equation*}
T_n^2(x) - (x-1)(x+1)Q_{n-1}^2=1.
\end{equation*}

Earlier sections show that the Pell equation plays a fundamental role  in a polynomial formulation of periodicity conditions. The solutions of the Pell equation are, up to rescaling,  \emph{the extremal polynomials} in the uniform norm on the union of two intervals defined by the Pell equation. We will call these generalized Chebyshev polynomials on two intervals -- \emph {the Zolotarev polynomials}, since they were introduced in works of Zolotarev \cites{Zolotarev1877}, a prominent student of Chebyshev.  These polynomials were studied further by Akhiezer \cites{Akh1,Akh2,Akh3} (see also \cites{AK47,AK90}). For some most recent results about Zolotarev polynomials see \cite{DS2021}. Following the classics, let us consider the union of two intervals $E_{n,m}=[-1,\alpha_{n,m}]\cup [\beta_{n,m}, 1]$, where
\begin{equation*}
\alpha_{n,m} = 1-2\sn^2\left(\frac{m}{n}K\right), \quad \beta_{n,m} = 2\sn^2\left(\frac{n-m}{n}K\right)-1.
\end{equation*}
Define
\begin{equation*}
TA_{n}(x,m,\kappa)=L\left(v_{n.m}^{n}(u)+\frac{1}{v_{n,m}^{n}(u)}\right),
\end{equation*}
where
$$
v_{n,m}(u)=\dfrac{\theta_1\big(u-\frac{m}{n}K\big)}{\theta_1\big(u+\frac{m}{n}K\big)},
\quad
x_{n,m}=\dfrac{\sn^2(u)\cn^2(\frac{m}{n}K)+\cn^2(u)\sn^2(\frac{m}{n}K)}{\sn^2(u)-\sn^2(\frac{m}{n}K)},
$$
and
$$
L_{n,m}=\frac{1}{2^{n-1}}
\left(\dfrac{\theta_0(0)\theta_3(0)}{\theta_0(\frac{m}{n}K)\theta_3(\frac{m}{n}K)}\right)^{2n},
\quad
\kappa_{n,m}^2=\frac{2(\beta_{n,m} -\alpha_{n,m})}{(1 -\alpha_{n,m})(1+\beta_{n,m})}.
$$
Akhiezer \cites{Akh1,Akh2,Akh3} proved the following result:

\begin{theorem}[Akhiezer]\label{th:Akhiezer}
	\begin{enumerate}[(a)]
		\item The function $TA_{n}(x,m,\kappa)$ is a polynomial  of degree $n$ in $x$ with the leading coefficient $1$ and the second coefficient equal to $-n\tau_{1}^{(n,m)}$, where
		$$
		\tau_{1}^{(n,m)}=-1+2\dfrac{\sn(\frac{m}{n}K)\cn(\frac{m}{n}K)}{\dn(\frac{m}{n}K)}\left(\frac{1}{sn(\frac{2m}{n}K)}-\frac{\theta^{\prime}(\frac{m}{n}K)}{\theta(\frac{m}{n}K)}\right).
		$$
		\item The maximum of the modulus of $TA_{n}$ on the union of the two intervals $[-1,\alpha_{n,m}]\cup [\beta_{n,m}, 1]$ is $L_{n,m}$.
		\item The function $TA_{n}$ takes values $\pm L_{n,m}$ with alternating signs at $\mu=n-m+1$ consecutive points of the interval $ [- 1, \alpha]$ and at $\nu=m+1$ consecutive points of the interval $ [\beta, 1]$. In addition
		$$
		TA_{n}(\alpha_{n,m},m,\kappa_{n,m})=TA_{n}(\beta_{n,m},m,\kappa_{n,m})=(-1)^{m}L_{n,m},
		$$
		and for any $x\in (\alpha_{n,m}, \beta_{n,m})$, it holds:
		$$
		(-1)^{m}TA_{n}(x,m,\kappa_{n,m})>L_{n,m}.
		$$
		\item The polynomials $TA_{n}(x,m,\kappa_{n,m})$ are the Zolotarev  polynomials for $E_{n, m}= [-1,\alpha_{n,m}]\cup [\beta_{n,m}, 1]$ with the norm $L_{n,m}=||TA_{n}(x,m,\kappa_{n,m})||_{E_{n,m}}$ and
		$$E_{n,m}=TA_{n}^{-1}[-L_{n,m}, L_{n,m}].$$
		\item Outside $E_{n,m}$ the derivative of the polynomial $TA_{n}(x,m,\kappa_{n,m})$ with respect to $x$ has only one zero $c_{n,m}$. It belongs to $[\alpha_{n,m}, \beta_{n,m}]$ and
		\begin{equation}\label{eq:cnm}
		c_{n,m}=\frac{\alpha_{n,m}+\beta_{n,m}}{2}-\tau_1^{(n,m)}.
		\end{equation}
		\item Let $F$ be a polynomial of degree $n$ in $x$ with the leading coefficient $1$, such that:
		
		\begin{enumerate}[i)]
			\item $max|F(x)|=L_{n,m}$ for $x\in [-1,\alpha_{n,m}]\cup [\beta_{n,m}, 1]$;
			\item $F(x)$ takes values $\pm L_{n,m}$ with alternating signs at $n-m+1$ consecutive points of the interval $[-1, \alpha_{n,m}]$ and at $m+1$ consecutive points of the interval $[\beta_{n,m}, 1]$.
		\end{enumerate}
		Then $F(x)=TA_{n}(x,m,\kappa_{n,m}).$
	\end{enumerate}
\end{theorem}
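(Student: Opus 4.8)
The result is classical and due to Akhiezer, so the plan is to treat part (a) as the one substantive step and to derive (b)--(f) from it by the general theory of Chebyshev-type extremal polynomials. The engine is the theta-function parametrization: I will show that $W(u):=v_{n,m}^{\,n}(u)+v_{n,m}^{-n}(u)$ is an even elliptic function of $u$ whose only poles lie over $x=\infty$, so that it descends to a genuine polynomial in $x=x_{n,m}(u)$; the prefactor $L_{n,m}$ is then exactly the normalization making the leading coefficient equal to $1$.

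First I would establish part (a). Since $\theta_1$ is odd, $v_{n,m}(-u)=1/v_{n,m}(u)$, hence $W(-u)=W(u)$; as $x_{n,m}$ is an even elliptic function of degree two, $W$ is a single-valued function of $x$. For ellipticity in $u$ I would use the quasi-periodicities of $\theta_1$ relative to the lattice generated by $2K$ and $2iK'$: under $u\mapsto u+2K$ numerator and denominator of $v_{n,m}$ transform identically, so $v_{n,m}$ is $2K$-periodic, while under $u\mapsto u+2iK'$ the two theta factors acquire exponentials differing only through the offset $\tfrac{m}{n}K$, giving $v_{n,m}(u+2iK')=e^{2\pi i m/n}v_{n,m}(u)$. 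Raising to the $n$-th power and using that $m/n$ has denominator dividing $n$ kills the root of unity, so $v_{n,m}^{\,n}$ — and therefore $W$ — is doubly periodic. The zeros and poles of $v_{n,m}$ sit at $u=\pm\tfrac{m}{n}K$, which are interchanged by $u\mapsto-u$ and are precisely the two points where $\sn^2u-\sn^2(\tfrac{m}{n}K)=0$, i.e.\ where $x_{n,m}=\infty$; there $W$ has a pole of order $n$ in $u$ against a simple pole of $x$ in $u$, so $W$ is a polynomial of degree exactly $n$ in $x$. The leading-coefficient normalization fixes $L_{n,m}$, and the second coefficient is read off from the next term of the Laurent expansion of $W$ at $x=\infty$. \emph{This last computation is the main obstacle}: extracting the coefficient of $x^{n-1}$ requires the subleading terms of both $x_{n,m}(u)$ and $\log\theta_1$ near $u=\tfrac{m}{n}K$, which is where the logarithmic derivative $\theta'/\theta$ and the quantity $\tau_1^{(n,m)}$ enter.

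For parts (b) and (c) I would restrict $u$ to the two vertical boundary segments of the fundamental rectangle that $x_{n,m}$ maps onto $[-1,\alpha_{n,m}]$ and $[\beta_{n,m},1]$. On these segments the reality of the Taylor coefficients of $\theta_1$ forces $|v_{n,m}(u)|=1$, so writing $v_{n,m}=e^{i\psi}$ gives $W=2\cos(n\psi)$; thus $|TA_n|$ attains its maximum $L_{n,m}$ exactly when $n\psi\in\pi\mathbb{Z}$, with alternating sign. Tracking the total increment of $\psi$ — which is $\tfrac{n-m}{n}\pi$ across the first interval and $\tfrac{m}{n}\pi$ across the second, reflecting the offsets $\tfrac{m}{n}K$ and $\tfrac{n-m}{n}K$ in $\alpha_{n,m},\beta_{n,m}$ — yields exactly $n-m+1$ and $m+1$ equioscillation points. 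The endpoint values $TA_n(\alpha_{n,m})=TA_n(\beta_{n,m})=(-1)^mL_{n,m}$ follow by direct substitution, and on the gap $(\alpha_{n,m},\beta_{n,m})$ the parameter $u$ becomes real, so $v_{n,m}$ is real and $|v_{n,m}^{\,n}+v_{n,m}^{-n}|>2$, which gives the strict inequality $(-1)^mTA_n>L_{n,m}$.

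Finally, (d) and (f) follow from the classical alternation theorem. The total number of equioscillation points is $(n-m+1)+(m+1)=n+2$, which exceeds the $n+1$ required for the Haar system $\{1,x,\dots,x^{n-1}\}$ on any subset of $\mathbb{R}$, so $TA_n$ is the unique monic sup-norm minimizer on $E_{n,m}$, i.e.\ the Zolotarev polynomial, and $E_{n,m}=TA_n^{-1}[-L_{n,m},L_{n,m}]$. For the uniqueness statement (f), given a competitor $F$ with $\max_{E_{n,m}}|F|=L_{n,m}$ and the stated alternation pattern, the difference $TA_n-F$ has degree $\le n-1$ yet changes sign at the $n+2$ alternation points of $TA_n$, and hence vanishes identically. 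For (e) I would count critical points: $TA_n'$ has degree $n-1$, and the interior equioscillation points contribute $(n-m-1)+(m-1)=n-2$ zeros inside $E_{n,m}$ (the four points $-1,\alpha_{n,m},\beta_{n,m},1$ are extreme-value points but not critical points), leaving exactly one critical point in the gap; its location $c_{n,m}$ is then pinned down by the same expansion computed in part (a), giving the stated formula in terms of $\alpha_{n,m},\beta_{n,m}$ and $\tau_1^{(n,m)}$.
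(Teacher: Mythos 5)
The paper does not prove this theorem: it is quoted verbatim as a classical result of Akhiezer, with the proof deferred to the cited works \cites{Akh1,Akh2,Akh3}. So there is no in-paper argument to compare yours against; what I can do is assess your outline on its own terms. Structurally it follows the standard classical route and the skeleton is sound: the evenness and double periodicity of $v_{n,m}^{\,n}+v_{n,m}^{-n}$, the identification of its poles with the two preimages of $x=\infty$ under the degree-two map $x_{n,m}(u)$ (giving a genuine polynomial of degree $n$), the reduction to $2\cos(n\psi)$ on the boundary segments where $|v_{n,m}|=1$ for the equioscillation count, and the alternation/Haar-system argument for (d) and (f) are all correct and are exactly how the classical proof goes.

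The gaps are the two quantitative assertions, and you have only deferred them rather than closed them. For (a), the theorem does not merely say the second coefficient exists; it gives the explicit value $-n\tau_1^{(n,m)}$ involving $\theta'/\theta$ and $1/\sn(\tfrac{2m}{n}K)$, and you explicitly label the required Laurent expansion at $x=\infty$ as ``the main obstacle'' without carrying it out. For (e), the count of $n-2$ interior critical points plus one in the gap is fine, but the formula \eqref{eq:cnm} for $c_{n,m}$ is again an explicit identity; saying it is ``pinned down by the same expansion'' is not a derivation. (The natural route is via Newton's identity: the sum of the $n-1$ roots of $TA_n'$ equals $(n-1)\tau_1^{(n,m)}$, so $c_{n,m}$ is determined once one knows the sum of the interior equioscillation points, which itself requires the elliptic parametrization of those points --- none of this is in your sketch.) One smaller point: in the uniqueness argument for (f), the difference $TA_n-F$ only satisfies weak alternating inequalities at the $n+2$ extremal points, so the zero count needs the standard care with coincident sign changes; this is routine but worth stating. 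As an outline of Akhiezer's proof your proposal is faithful; as a proof it is incomplete precisely where the theorem's explicit formulas live.
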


The above formulae for $TA_n$ and $E_{n,m}$ provide a complete parametrizations for the Zolotarev polynomials and their supports in the case of two intervals. They can be used in our study of periodic trajectories. We will consider as an example one of the cases of $3$-periodic trajectories, but the same consideration can be applied to any case of any period.

Consider the transversal case of $3$-periodic trajectories
when $b<\nu<0<a<c$. For  $c_0<c_1<c_2<c_3$ we get 
$1/\nu<1/b<1/c<1/a$. We want to construct an affine transformation
$$
h(s)=\hat ls+\hat m: E_{3, m}= [-1,\alpha_{3,m}]\cup [\beta_{3,m}, 1]\xrightarrow[]{} [c_0, c_1]\cup[c_2, c_3].
$$
One of the questions is to determine $m$.

Let us denote $Y=\sn(K/3)$. We are going to calculate $\sn(2K/3)$ in two different ways. The first is by expressing 
$\sn(K-u)$ in terms of $\sn(u), \cn(u), \dn(u)$. The second is in expressing $\sn(2\cdot u)$ in terms of $\sn(u), \cn(u), \dn(u)$. We get
$$
\sn\left(\frac{2K}{3}\right)=\frac{\cn(\frac{K}{3})}{\dn(\frac{K}{3})}
$$
and 
$$
\sn\left(\frac{2K}{3}\right)=\frac{2\sn(\frac{K}{3})\cn(\frac{K}{3})\dn(\frac{K}{3})}{1-\kappa^2\sn^4(\frac{K}{3})}.
$$
By taking the squares and using the formulae to express
$\cn^2(u)$ and $\dn^2(u)$ in terms of $\sn(u)$ and $\kappa$
we get
$$
\kappa^2=\frac{2Y-1}{Y^3(2-Y)}
$$
and 
$$
\sn\left(\frac{2K}{3}\right)=Y(2-Y).
$$
We want to calculate the affine transformation:
$$
h(s)=\hat ls+\hat m: E_{3, m}= [-1,\alpha_{3,m}]\cup [\beta_{3,m}, 1]\xrightarrow[]{} [c_0, c_1]\cup[c_2, c_3].
$$
From $\hat l+\hat m=1/a$, $\hat l\beta+\hat m=1/b$, and $\hat l\alpha +\hat m = 1/b$ we get
\begin{equation}\label{eq:affine1}
\frac{a-\beta c}{c(1-\beta}=\frac{a-\alpha b}{1-\alpha}.
\end{equation}

We have two potential cases: (a) $m=1$ and (b) $m=2$.

\begin{enumerate}[(a)]
	\item $m=1$. Then $\alpha_{3,1}=1-2Y^2$ and $\beta_{3,1}=-1+4Y-2Y^2$. The equation \eqref{eq:affine1}
	leads to 
	\begin{equation}\label{eq:affine2}
	(a-b)c-2(a-b)cY+(bc+ac-ab)Y^2=0.
	\end{equation}
	Uisng $-l+m=1/\nu$ we also get
	\begin{equation}\label{eq:ni1}
	\nu = \frac{ab Y^2}{a-b+bY^2}
	\end{equation}
	However, the last two equations \eqref{eq:affine2} and \eqref{eq:ni1} are not compatible with the equation for the caustic in $3$-periodic case
	\begin{equation}\label{eq:ni3periodic}
	3(abc)^2-2(abc)(ab+bc+ac)\nu +(4abc(a+b+c)-(ab+ac+bc)^2)\nu^2=0.
	\end{equation}
	\item $m=2$. Then $\beta_{3,2}=1-2Y^2$ and $\alpha_{3,2}=1-4Y+2Y^2$. The equation \eqref{eq:affine1}
	leads to 
	\begin{equation}\label{eq:affine3}
	(a-b)c-2b(c-a)Y+a(b-c)Y^2=0.
	\end{equation}
	Using $-\hat l+\hat m=1/\nu$ we also get
	\begin{equation}\label{eq:ni2}
	\nu = \frac{ab Y(2-Y)}{a-b(1-2Y+Y^2)}
	\end{equation}
	The last two equations \eqref{eq:affine3} and \eqref{eq:ni2} are compatible with the equation \eqref{eq:ni3periodic} for the caustic in $3$-periodic case, which here takes the form:
	$$\frac{PQ}{R}=0$$
	with $P=(a-b)c-2b(c-a)Y+a(b-c)Y^2$, $Q=a^2b^2(3c(a-b)+2(ab-2ac-bc)Y -a(b-c)Y^2)$, and $R=(a-b+2bY-bY^2)^2$.
\end{enumerate}
We have proved the following
\begin{proposition}\label{prop:Zolotarev} In the transverse $3$-periodic case with $b<\nu<0<a<c$ the following relations have place, with $Y=\sn(K/3)$:
	$$m=2, \kappa^2=\frac{2Y-1}{Y^3(2-Y)}$$
	$$\hat l=\frac{1}{2c(Y^2-1)}, \hat m=\frac{a+c-2cY^2}{2ac(1-Y^2)}$$
	$$\beta_{3,2}=1-2Y^2, \quad \alpha_{3,2}=1-4Y+2Y^2$$
	and
	$$
	\hat p_3(x)\sim  TZ_3\left(\frac{x-\hat m}{\hat l}; 2; \kappa\right).
	$$
\end{proposition}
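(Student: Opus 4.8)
The plan is to identify the degree-three solution $\hat p_3$ of the periodicity Pell equation with a rescaled Zolotarev polynomial and then to pin down the discrete index $m$, the modulus $\kappa$, and the affine normalization $h$. By Corollary \ref{cor:periodicPell}, $3$-periodicity produces real polynomials $\hat p_3,\hat q_1$ with $\hat p_3^2-T(s)\hat q_1^2=1$, where $T(s)=(\tfrac1a-s)(\tfrac1b-s)(\tfrac1c-s)(\tfrac1\nu-s)$; up to a constant factor $\hat p_3$ is the extremal polynomial on the two intervals cut out by $T\ge 0$. With $b<\nu<0<a<c$ the substitution $s=1/\lambda$ from Section \ref{RotNum} gives the ordering $1/\nu<1/b<1/c<1/a$, so the support is $[1/\nu,1/b]\cup[1/c,1/a]$. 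Akhiezer's classification (Theorem \ref{th:Akhiezer}) then says that, after an affine change of variable carrying this support onto the normalized support $E_{3,m}=[-1,\alpha_{3,m}]\cup[\beta_{3,m},1]$, the polynomial is $TA_3(\,\cdot\,,m,\kappa)=TZ_3(\,\cdot\,;m;\kappa)$ for exactly one admissible $m\in\{1,2\}$ and one modulus $\kappa$. The task thus splits into computing $\kappa$, selecting $m$, and determining $h$.

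First I would compute the modulus by setting $Y=\sn(K/3)$ and evaluating $\sn(2K/3)$ in two ways from the identities \eqref{eq:ellipticidentities}: once through the complementary relation $\sn(K-u)=\cn u/\dn u$ at $u=K/3$, and once through the duplication formula (the $w=z$ specialization of the addition law). Squaring both expressions and replacing $\cn^2=1-\sn^2$, $\dn^2=1-\kappa^2\sn^2$ leaves a single algebraic relation, from which one solves $\kappa^2=(2Y-1)/(Y^3(2-Y))$ together with $\sn^2(2K/3)=Y(2-Y)$. Substituting these into $\alpha_{3,m}=1-2\sn^2(\tfrac m3 K)$ and $\beta_{3,m}=2\sn^2(\tfrac{3-m}3 K)-1$ renders both endpoints of $E_{3,m}$ as polynomials in $Y$ for each candidate $m$; in particular $\alpha_{3,2}=1-2\sn^2(2K/3)=1-4Y+2Y^2$.

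Next I would write $h(s)=\hat l s+\hat m$ and impose that it matches the four endpoints $\{-1,\alpha_{3,m},\beta_{3,m},1\}$ with $\{1/\nu,1/b,1/c,1/a\}$. Two of these conditions fix $\hat l,\hat m$, one more is the compatibility relation \eqref{eq:affine1}, and the last determines $\nu$ as a function of $Y,a,b,c$. Carrying this out for $m=1$ yields the pair \eqref{eq:affine2}, \eqref{eq:ni1}, while $m=2$ yields \eqref{eq:affine3}, \eqref{eq:ni2}. The index $m$ is then forced by genuine $3$-periodicity: one substitutes each $\nu$-expression into the caustic equation \eqref{eq:ni3periodic} and tests consistency. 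For $m=1$ the relations are incompatible with \eqref{eq:ni3periodic}; for $m=2$ the caustic polynomial factors as $PQ/R=0$ with the affine-matching factor $P=(a-b)c-2b(c-a)Y+a(b-c)Y^2$ dividing it, so the system is consistent. This selects $m=2$, and solving the two endpoint equations then gives the stated $\hat l=\tfrac1{2c(Y^2-1)}$ and $\hat m=\tfrac{a+c-2cY^2}{2ac(1-Y^2)}$.

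The main obstacle is exactly this elimination distinguishing the two admissible indices: one must verify, as an identity in $a,b,c,Y$ after eliminating $\nu$ through \eqref{eq:ni1} or \eqref{eq:ni2}, that the $3$-periodicity resultant \eqref{eq:ni3periodic} is divisible by the affine-matching polynomial precisely for $m=2$. Granting this factorization, the conclusion $\hat p_3(x)\sim TZ_3\big((x-\hat m)/\hat l;\,2;\,\kappa\big)$ follows: the composition $s\mapsto\hat p_3(h(s))$ is, up to a constant factor, a monic cubic on $E_{3,2}$ whose equioscillation pattern on the two normalized intervals is that of the extremal solution of the Pell equation \eqref{PellEq}, so the uniqueness statement in part (f) of Theorem \ref{th:Akhiezer} identifies it with $TA_3(\,\cdot\,,2,\kappa)$, which is $TZ_3(\,\cdot\,;2;\kappa)$.
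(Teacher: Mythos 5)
Your proposal follows essentially the same route as the paper: the modulus $\kappa^2=(2Y-1)/(Y^3(2-Y))$ is obtained from the two evaluations of $\sn(2K/3)$ via the complementary and duplication identities, the affine map is fixed by endpoint matching, and the index $m=2$ is selected by testing compatibility of the resulting expression for $\nu$ against the caustic condition \eqref{eq:ni3periodic}, whose factorization $PQ/R$ contains the affine-matching polynomial $P$. The only addition is your explicit appeal to the uniqueness clause of Theorem \ref{th:Akhiezer} at the end, which the paper leaves implicit; the argument is correct.
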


Here $\sim$ denotes that two polynomials are equal up to a scalar factor.
The polynomial $\hat p_3$ which appears in Proposition \ref{prop:Zolotarev} is presented in the top part of Figure \ref{fig:Zolotarev}. The polynomial presented on the bottom of Figure  \ref{fig:Zolotarev} cannot be materialized in the case under the consideration. This contrasts the situation in the Euclidean plane where the situation is exactly opposite, see \cite{DR2019a}. 
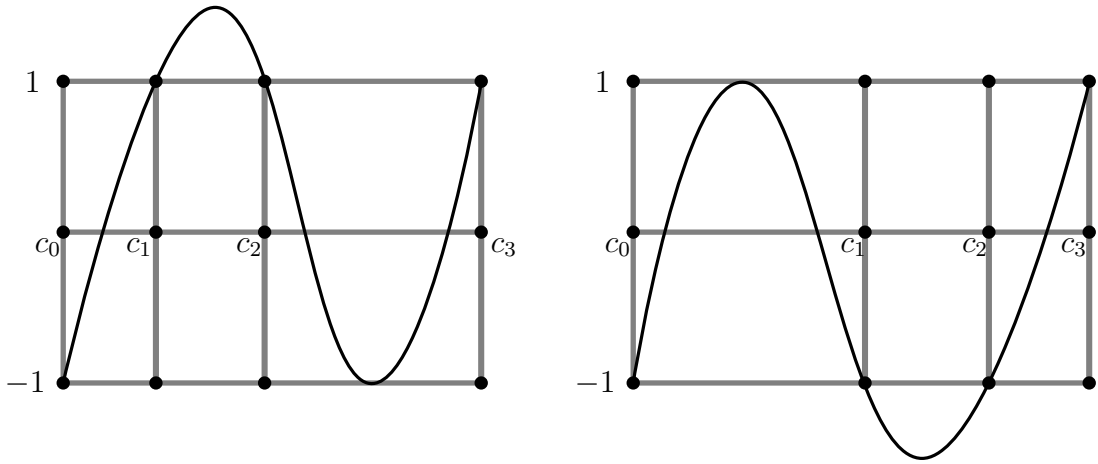
\begin{figure}[htbp]
	\centering
		\begin{tikzpicture}[scale=1]
		
		\draw[line width=.7mm, gray](0,2)--(5.5,2);
		\draw[line width=.7mm, gray](0,0)--(5.5,0);
		\draw[line width=.7mm, gray](0,-2)--(5.5,-2);
		\draw[line width=.7mm, gray](0,2)--(0,-2);
		\draw[line width=.8mm, gray](1.22,-2)--(1.22,2);
		\draw[line width=.8mm, gray](2.65,-2)--(2.65,2);
		\draw[line width=.8mm, gray](5.5,-2)--(5.5,2);
		\draw[gray](0,-2)--(5.5,-2);
		
		
		\draw [very thick] plot [smooth, tension=.8] coordinates { (0,-2) (2,2.98) (4,-2) (5.5,1.98)};
		\fill[black] (2.65,-2) circle (2.5pt);
		\fill[black] (2.65,0) circle (2.5pt);
		\fill[black] (2.65,2) circle (2.5pt);
		\fill[black] (1.22,2) circle (2.5pt);
		\fill[black] (1.22,0) circle (2.5pt);
		\fill[black] (1.22,-2) circle (2.5pt);
		
		\fill[black] (0, 0) circle (2.5pt);
		\fill[black] (0,2) circle (2.5pt);
		\fill[black] (0,-2) circle (2.5pt);
		
		\fill[black] (5.5, 0) circle (2.5pt);
		\fill[black] (5.5,2) circle (2.5pt);
		\fill[black] (5.5,-2) circle (2.5pt);
		
		\draw (-.4,2) node {$1$};
		\draw (-.5,-2) node {$-1$};
		\draw (-.2,-.2) node {${c}_{0}$};
		\draw (1,-.2) node {${c}_{1}$};
		\draw (2.45,-.2) node {${c}_{2}$};
		\draw (5.8,-.2) node {${c}_{3}$};
\begin{scope}[shift={(7.5,0)}]
		\draw[line width=.7mm, gray](0,2)--(6,2);
\draw[line width=.7mm, gray](0,0)--(6,0);
\draw[line width=.7mm, gray](0,-2)--(6,-2);
\draw[line width=.7mm, gray](0,2)--(0,-2);
\draw[line width=.8mm, gray](6,-2)--(6,2);
\draw[line width=.8mm, gray](3.05,-2)--(3.05,2);
\draw[line width=.8mm, gray](4.68,-2)--(4.68,2);
\draw[gray](0,-2)--(6,-2);


\draw [very thick] plot [smooth, tension=.8] coordinates { (0,-2) (1.5,1.98) (3.8,-3) (6,1.98)};

\fill[black] (3.05,-2) circle (2.5pt);
\fill[black] (3.05,0) circle (2.5pt);
\fill[black] (3.05,2) circle (2.5pt);

\fill[black] (4.68,-2) circle (2.5pt);
\fill[black] (4.68,0) circle (2.5pt);
\fill[black] (4.68,2) circle (2.5pt);

\fill[black] (0, 0) circle (2.5pt);
\fill[black] (0,2) circle (2.5pt);
\fill[black] (0,-2) circle (2.5pt);

\fill[black] (6, 0) circle (2.5pt);
\fill[black] (6,2) circle (2.5pt);
\fill[black] (6,-2) circle (2.5pt);

\draw (-.4,2) node {$1$};
\draw (-.5,-2) node {$-1$};
\draw (-.2,-.2) node {${c}_{0}$};
\draw (2.9,-.2) node {${c}_{1}$};
\draw (4.5,-.2) node {${c}_{2}$};
\draw (5.8,-.2) node {${c}_{3}$};

\end{scope}
		
		\end{tikzpicture}
	
	\caption{On the left: the polynomial $\hat{p}_{3}$ corresponding to $n=3$, $m=2$. On the right: the polynomial $\hat{p}_{3}$ corresponding to $n=3$, $m=1$.
	}
	\label{fig:Zolotarev}
\end{figure}

\subsection{Periodic light-like trajectories and Akhiezer polynomials on two symmetric intervals}
By definition, light-like trajectories have velocity $v$ satisfying $\ip{v}{v}=0$ and their caustic is the caustic at infinity, $\mathcal{C}_\infty$. 
As noted in Theorem \ref{th:cayley}, closed light-like trajectories can only be of even period. To that end, we can adjust the above polynomial-based results in the setting of light-like trajectories by considering the limit as $\nu \to \infty$.

\begin{proposition}
	A light-like trajectory in the collared or transverse $\Hyp$-ellipse is periodic with period $n=2m$ if and only if there exist real polynomials $\widehat{p}_n$ and $\widehat{q}_{n-2}$ of degrees $n$ and $n-2$, respectively, which satisfy the Pell equation
	\begin{equation*}
	\widehat{p}_n(s)^2 -s \left(\frac{1}{a}-s\right) \left(\frac{1}{b}-s\right) \left(\frac{1}{c}-s\right) \widehat{q}_{n-2}(s)^2 =1.
	\end{equation*}
\end{proposition}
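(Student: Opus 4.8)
The plan is to rerun the construction behind Corollary~\ref{cor:periodicPell} one more time, but starting from the \emph{light-like} branch of Theorem~\ref{th:cayley} in place of the space-/time-like one. Heuristically this is the limit $\nu\to\infty$ onto the caustic $\mathcal{C}_\infty$, under which the factor $(X-\nu)$ drops out of \eqref{eq:ellipticcurve} and the quartic radical degenerates to the cubic $\sqrt{\delta(X-a)(X-b)(X-c)}$ with $\delta=\sign(b)$. However, since $\ep=\sign(b\nu)$ flips as $\nu\to\pm\infty$ and $Y$ must be rescaled, that limit is awkward to sign-track; I would instead argue directly from the light-like Cayley data, which keeps every sign unambiguous and reproduces the $-s(\cdots)$ weight exactly.

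First I would invoke the light-like part of Theorem~\ref{th:cayley}: the trajectory is $n=2m$ periodic ($m\ge 2$) if and only if the Hankel determinant in the coefficients $E_i$ vanishes. Transporting the Riemann--Roch/basis computation of Lemma~5.4 and Theorem~5.6 of \cite{GaR} to this cubic radical, this vanishing is equivalent to the existence of real polynomials $p_m^*(X)$ and $q_{m-2}^*(X)$, of degrees $m$ and $m-2$, such that $p_m^*(X)-q_{m-2}^*(X)\sqrt{\delta(X-a)(X-b)(X-c)}$ has a zero of order $2m$ at $X=0$. Then, multiplying by the algebraic conjugate produces the polynomial $[p_m^*(X)]^2-\delta[q_{m-2}^*(X)]^2(X-a)(X-b)(X-c)$, which has degree exactly $2m$ (the radical term contributes only degree $2m-1$) and a zero of order $2m$ at $X=0$; hence it equals $kX^{2m}$, where $k$ is the square of the leading coefficient of $p_m^*$, so $k>0$ and $\sign k=1$.

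Next I would divide by $|k|X^{2m}$ and set $s=1/X$, using $(X-a)(X-b)(X-c)=\dfrac{abc}{s^3}\left(\tfrac1a-s\right)\left(\tfrac1b-s\right)\left(\tfrac1c-s\right)$ together with the sign identity $\delta\cdot abc=|abc|>0$, which holds in both the collared ($0<a<b<c$) and transverse ($b<0<a<c$) cases because $a,c>0$. Defining $p_m(s)=s^mp_m^*(1/s)/\sqrt{|k|}$ and $q_{m-2}(s)=s^{m-2}q_{m-2}^*(1/s)\sqrt{|abc|}/\sqrt{|k|}$ then yields
$$p_m^2(s)-s\left(\tfrac1a-s\right)\left(\tfrac1b-s\right)\left(\tfrac1c-s\right)q_{m-2}^2(s)=1.$$
Finally, the Chebyshev-type doubling $\widehat p_n=2p_m^2-1$, $\widehat q_{n-2}=2p_mq_{m-2}$ (of degrees $n=2m$ and $n-2$) converts this into the asserted Pell equation, exactly as in Corollary~\ref{cor:periodicPell}. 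The converse runs the steps backwards: a degree-$n$ Pell solution factors over the cubic radical into a function with an order-$2m$ zero at $X=0$, recovering the determinant condition; here one must invoke the standard correspondence between polynomial Pell solutions and the torsion divisor condition to align the degree of the minimal solution with the period.

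The hard part is the first step: carrying the basis computation of \cite{GaR} over to the degenerate caustic $\mathcal{C}_\infty$, where the governing curve is the cubic $Y^2=\delta(X-a)(X-b)(X-c)$ rather than the quartic \eqref{eq:ellipticcurve}, and checking that the very same Hankel determinant in the $E_i$ still encodes the order-$2m$ vanishing. A secondary point requiring care is the uniform sign bookkeeping: it is precisely the identity $\delta\cdot abc=|abc|$ that makes the weight $s\left(\tfrac1a-s\right)\left(\tfrac1b-s\right)\left(\tfrac1c-s\right)$ come out with the correct sign for both geometric types at once, while the observation $k>0$ in the second step is what removes any $\sign k$ ambiguity and leaves the right-hand side equal to $1$.
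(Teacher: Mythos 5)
Your overall architecture is reasonable and is, in fact, more than the paper supplies: the paper states this proposition without proof, merely remarking that it is the $\nu\to\infty$ adjustment of Corollary~\ref{cor:periodicPell}, whereas you rerun the conjugate multiplication, the inversion $s=1/X$, and the Chebyshev doubling directly on the light-like Cayley data. That template is sound and is exactly the one used for the non-degenerate caustic, and your observation that $k>0$ is automatic in the cubic case is correct.

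However, there is a genuine sign gap, and it sits precisely at the step you advertise as the careful bookkeeping. With $\delta=\sign(b)$ one has $\delta(0-a)(0-b)(0-c)=-\delta\,abc=-|abc|<0$ --- this is the same identity $\delta\,abc=|abc|$ you use later --- so the expansion $\sqrt{\delta(X-a)(X-b)(X-c)}=E_0+E_1X+\cdots$ is $i$ times a real series. A \emph{real} combination $p_m^*-q_{m-2}^*\sqrt{\delta(X-a)(X-b)(X-c)}$ can then vanish to order $2m$ at $X=0$ only if $p_m^*$ and $q_{m-2}^*(E_0+\cdots)$ vanish separately to that order, which is impossible for $\deg p_m^*=m<2m$; so the real polynomials you start from do not exist as described. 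Repairing this (replace $\delta$ by $-\sign(b)$, or take $q_{m-2}^*$ purely imaginary) flips the sign of the subtracted term and yields $\widehat p_n^2+s\left(\tfrac1a-s\right)\left(\tfrac1b-s\right)\left(\tfrac1c-s\right)\widehat q_{n-2}^2=1$, equivalently $\widehat p_n^2-s\left(s-\tfrac1a\right)\left(s-\tfrac1b\right)\left(s-\tfrac1c\right)\widehat q_{n-2}^2=1$ with monic weight. That this is the only viable sign can be seen without any of the above: the weight $s\left(\tfrac1a-s\right)\left(\tfrac1b-s\right)\left(\tfrac1c-s\right)$ has leading coefficient $-1$, so in $\widehat p_n^2-(\text{weight})\,\widehat q_{n-2}^2$ the top coefficients $L_{\widehat p}^2$ and $L_{\widehat q}^2$ add rather than cancel, the left-hand side has degree $2n$, and the identity as you derive it (and as the proposition literally states it) has no solutions of the claimed degrees. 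The corrected sign is also the one delivered by the $\nu\to\infty$ limit of \eqref{PellEq} (replace $\tfrac1\nu-s$ by $-s$) and the one compatible with the two-interval support $[0,1/c]\cup[1/b,1/a]$ invoked immediately after the proposition; your instinct to distrust that limit led you to ``confirm'' a sign that a leading-coefficient count rules out.
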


The Pell equation from the above proposition describes extremal polynomials on two intervals, $[0,1/c] \cup [1/b,1/a]$ or $[1/b,0]\cup[1/c,1/a]$, for the collared or transverse $\Hyp$-ellipse, respectively. We can express each of these in terms of Akhiezer polynomials of even degree composed with an affine transformation, $[c_1,c_2]\cup[c_3,c_4] \mapsto [-1,-\alpha]\cup[\alpha,1]$ for $0 < \alpha < 1$, a simplification of the Zolotarev polynomials previously discussed. In such a case, the polynomails are called \emph{the Akhiezer polynomials}, denoted as  $A_{2m}$ and obtained by a quadratic substitution from the Chebyshev polynomial $T_m$:
\begin{equation}\label{EvenAK}
A_{2m}(x;\alpha) = \frac{(1-\alpha^2)^m}{2^{2m-1}}T_m\left( \frac{2x^2-1-\alpha^2}{1-\alpha^2} \right).
\end{equation}

We illustrate this idea in the example of light-like trajectories of period 4. As noted in \cite{GaR}, the collared $\Hyp$-ellipse can have a light-like period 4 orbit when $c = ab/(b-a)$ and $a<b<2a$; the transverse $\Hyp$-ellipse can have a light-like period 4 orbit when $b = ac/(a-c)$. Under these restrictions on $a$, $b$, $c$ in each case, we can produce a functional solution in terms of Zolotarev polynomials in terms of only two of the parameters $a$, $b$, $c$. 

\begin{proposition}
	Consider a light-like period $4$ trajectory in the collared $\Hyp$-ellipse. The polynomial $\hat{p}_4$ is equal to, up to constant factor, $$\hat{p}_4(s) \sim T_2\left(\frac{2ab^2s^2 - 2b^2s+b-a}{b-a}\right)$$
	where $T_2(x) = 2x^2-1$ and $x=2as-1$. 
\end{proposition}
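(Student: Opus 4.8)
The plan is to specialize the general light-like Pell equation from the preceding proposition to the period $4$ case $n=4$, $m=2$, and then identify the resulting extremal polynomial explicitly with an Akhiezer polynomial via the substitution formula \eqref{EvenAK}. First I would invoke the preceding proposition with $n=2m=4$: there exist real polynomials $\widehat{p}_4$ of degree $4$ and $\widehat{q}_2$ of degree $2$ satisfying
\begin{equation*}
\widehat{p}_4(s)^2 - s\left(\frac{1}{a}-s\right)\left(\frac{1}{b}-s\right)\left(\frac{1}{c}-s\right)\widehat{q}_2(s)^2 = 1.
\end{equation*}
The solution $\widehat{p}_4$ is, up to a constant factor, the Akhiezer (even Zolotarev) polynomial on the two intervals $[0,1/c]\cup[1/b,1/a]$, which are the branch intervals appearing in the collared case.

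Next I would impose the collared light-like period-$4$ constraint $c = ab/(b-a)$ with $a<b<2a$, as recalled from \cite{GaR}. Under this substitution the right endpoint of the first interval becomes $1/c = (b-a)/(ab)$, so the two intervals become $[0,(b-a)/(ab)]\cup[1/b,1/a]$, now expressed in terms of only $a$ and $b$. The key structural observation is that this constraint forces the two intervals to be symmetric after the affine map $x = 2as - 1$: I would check that $s=0\mapsto x=-1$ and $s=1/a\mapsto x=+1$, and verify that the constraint $c=ab/(b-a)$ is exactly what makes the images of the inner endpoints $1/c$ and $1/b$ symmetric about the origin, i.e.\ of the form $-\alpha$ and $\alpha$. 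This reduces the two-interval Zolotarev problem to the symmetric-interval Akhiezer problem $[-1,-\alpha]\cup[\alpha,1]$ to which \eqref{EvenAK} applies directly with $m=2$.

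Having reduced to the symmetric case, I would apply \eqref{EvenAK} with $2m=4$, giving $A_4(x;\alpha)$ proportional to $T_2\!\left(\frac{2x^2-1-\alpha^2}{1-\alpha^2}\right)$. Substituting $x=2as-1$ and the computed value of $\alpha$ (in terms of $a,b$) into the argument of $T_2$, then simplifying, should collapse the inner fraction to $\frac{2ab^2 s^2 - 2b^2 s + b - a}{b-a}$, matching the claimed formula. The simplification is a routine but slightly delicate algebraic identity: one must verify that $2x^2 - 1 - \alpha^2$ divided by $1-\alpha^2$ equals that rational expression in $s$ under the substitution $x=2as-1$ and the period-$4$ constraint. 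Since everything is determined up to a constant factor, I would only need to match the polynomial up to scaling, which the $\sim$ in the statement permits.

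The main obstacle I anticipate is the bookkeeping in the second paragraph: correctly identifying $\alpha$ and confirming that the constraint $c=ab/(b-a)$ is precisely the condition making the affine-transformed intervals symmetric. One must be careful that the endpoints map in the right order (so that $[0,1/c]\cup[1/b,1/a]$ goes to $[-1,-\alpha]\cup[\alpha,1]$ and not to some misordered or overlapping configuration), and that the value $\alpha\in(0,1)$ is genuinely attained under $a<b<2a$. Once $\alpha$ is pinned down, the final identity is a mechanical verification of a quadratic substitution into $T_2$, so the conceptual content lies entirely in the symmetrization step.
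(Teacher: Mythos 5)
Your proposal is correct and follows essentially the same route as the paper: the paper likewise constructs the affine map between $[-1,-\alpha]\cup[\alpha,1]$ and $[0,1/c]\cup[1/b,1/a]$ (finding $g(x)=(x+1)/(2a)$, hence $x=2as-1$), observes that the two determinations $\alpha=2a/b-1$ and $\alpha=1-2a/c$ coincide exactly under $c=ab/(b-a)$, and then composes with the Akhiezer formula \eqref{EvenAK} for $m=2$. The algebraic verification you defer does indeed collapse to the stated expression, since $1-\alpha^2=4a(b-a)/b^2$.
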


\begin{proof}
	First we seek to find an affine transformation $$g: [-1,-\alpha] \cup [\alpha,1] \mapsto [0,1/c]\cup[1/b,1/a].$$ Writing $g(x) = Ax+B$, we see that
	$$-A+B = g(-1) = 0, \qquad A+B = g(1) = \frac{1}{a}.$$
	Solving this system implies $A=B=\frac{1}{2a}$. The other two endpoints produce equations 
	$$-A\alpha +B = g(-\alpha) = \frac{1}{c}, \qquad A\alpha + B = g(\alpha) = \frac{1}{b}.$$ 
	Solving each equation gives $\alpha = \dfrac{2a}{b}-1$ and $\alpha = -\dfrac{2a}{c}+1$, which are equal due to the assumption that $c=\dfrac{ab}{b-a}$. Set $x=g^{-1}(s)$. A simple calculation of the composition of $g$ and (\ref{EvenAK}) proves the proposition.
\end{proof}

Repeating the above proof in the case of the transverse $\Hyp$-ellipse produces a similar result. 

\begin{proposition}
	Consider a light-like period $4$ trajectory in the transverse $\Hyp$-ellipse. The polynomial $\hat{p}_4$ is equal to, up to constant factor, $$\hat{p}_4(s) \sim T_2\left(\frac{2a^2cs^2 - 2a^2s-(c-a)}{c-a}\right)$$
	where $T_2(x) = 2x^2-1$ and $x=\dfrac{2acs-a}{2c-a}$. 
\end{proposition}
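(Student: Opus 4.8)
The plan is to mirror exactly the argument just given for the collared $\Hyp$-ellipse, transporting the symmetric even-Akhiezer picture onto the relevant pair of intervals by an affine change of variable and then reading off $\hat{p}_4$ as a composition with $T_2$. First I would record the target intervals: in the transverse case $b<0<a<c$, so the four reciprocals are ordered as $1/b<0<1/c<1/a$, and by the remark following the Pell proposition the relevant pair is $[1/b,0]\cup[1/c,1/a]$. I therefore seek an affine map $g(x)=Ax+B$ sending the symmetric pair $[-1,-\alpha]\cup[\alpha,1]$ onto $[1/b,0]\cup[1/c,1/a]$ in an order-preserving way, i.e.\ matching endpoints
$$g(-1)=\tfrac1b,\quad g(-\alpha)=0,\quad g(\alpha)=\tfrac1c,\quad g(1)=\tfrac1a.$$

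Next, from the two outer conditions $g(\pm1)$ I would solve $A=\tfrac12(1/a-1/b)$ and $B=\tfrac12(1/a+1/b)$. The two inner conditions then give two separate formulas for $\alpha$, namely $\alpha=B/A$ from $g(-\alpha)=0$ and $\alpha=(1/c-B)/A$ from $g(\alpha)=1/c$. The crux of the argument is that these two values coincide precisely when $B=1/(2c)$, that is when $1/a+1/b=1/c$, which is exactly the stated period-$4$ light-like condition $b=ac/(a-c)$ (equivalently $c=ab/(a+b)$). This is the only step that genuinely uses the hypothesis, and it is the analogue of the compatibility check in the collared proof. Under this condition I would simplify to $\alpha=a/(2c-a)$ (which indeed lies in $(0,1)$ since $a<c$) and invert $g$ to obtain $x=g^{-1}(s)=(2acs-a)/(2c-a)$, matching the claimed intermediate substitution.

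Finally I would substitute $x=g^{-1}(s)$ and $\alpha=a/(2c-a)$ into the even Akhiezer polynomial \eqref{EvenAK} with $m=2$, so that $\hat{p}_4(s)\sim A_4(g^{-1}(s);\alpha)\sim T_2\!\left(\dfrac{2x^2-1-\alpha^2}{1-\alpha^2}\right)$. Using $1-\alpha^2=4c(c-a)/(2c-a)^2$ and expanding $2x^2-1-\alpha^2$, the common factor $(2c-a)^2$ cancels and the numerator reduces to $8a^2c^2s^2-8a^2cs-4c(c-a)$; dividing through by $4c(c-a)$ yields the argument $(2a^2cs^2-2a^2s-(c-a))/(c-a)$ appearing in the proposition. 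Since the overall constant $(1-\alpha^2)^2/2^{2m-1}$ from \eqref{EvenAK} is absorbed into the relation $\sim$, this completes the identification.

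I do not expect any genuine obstacle: the single nontrivial point is the cancellation forced by $b=ac/(a-c)$, which is what makes the two determinations of $\alpha$ agree and hence what makes the affine map $g$ exist at all. The remaining work is the routine but slightly lengthy algebraic simplification of the composition, carried out exactly as in the collared case, so the transverse statement follows by the same scheme.
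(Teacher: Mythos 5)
Your proposal is correct and is precisely the argument the paper intends: the paper proves the collared case in detail and then states that the transverse case follows by ``repeating the above proof,'' which is exactly what you carry out, with the right endpoint matching, the compatibility of the two determinations of $\alpha$ forced by $b=ac/(a-c)$, and the correct simplification to $\alpha=a/(2c-a)$, $x=(2acs-a)/(2c-a)$, and the stated argument of $T_2$.
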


\subsection{Degenerate cases and classical Chebyshev polynomials}

In this section, we consider the cases the caustic $\mathcal{C}_{\nu}$ is degenerate, in particular  $a<\nu=b<c$.
We will derive the conditions for $2m$-periodicity of the corresponding trajectories.

Note that the discussion from Section \ref{TopProperties} implies that there are two closed trajectories on that level set, both being $2$-periodic.
However, in the limit $\nu\to b$, the rotation number can approach another value.
The next proposition gives the condition for resonance in that limit.

\begin{proposition}\label{prop:ll}
	A trajectory is periodic with period $n=2m$ in the case $a<\nu=b<c$ if and only if there exist real polynomials $\hat{p}_m(s)$ and $\hat{q}_{m-1}(s)$ of degrees $m$ and $m-1$ respectively if and only if:
	\begin{enumerate}[(a)]
		\item $\hat{p}_m^2(s)-\left(s-\dfrac1a\right)\left(s-\dfrac1c\right)\hat{q}_{m-1}^2(s)=1$; and
		\item $\hat{q}_{m-1}(1/b)=0$.
	\end{enumerate}	
\end{proposition}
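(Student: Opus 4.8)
The plan is to reduce the degenerate quartic Pell condition to a single-interval (Chebyshev-type) condition carrying one extra linear constraint, by extracting the double root at $X=b$. I would begin from the characterization established in the proof of the polynomial periodicity theorem above (itself resting on Lemma 5.4 and Theorem 5.6 of \cite{GaR}): a trajectory with caustic $\mathcal{C}_\nu$ is $2m$-periodic if and only if there exist real polynomials $p_m^*(X)$ and $q_{m-2}^*(X)$ of degrees $m$ and $m-2$ such that
$$p_m^*(X) - q_{m-2}^*(X)\sqrt{\ep(X-a)(X-b)(X-c)(X-\nu)}$$
vanishes to order $2m$ at $X=0$. Setting $\nu=b$ (so that $\ep=\sign(b\nu)=\sign(b^2)=1$) makes the radicand $(X-a)(X-b)^2(X-c)$, whence $\sqrt{\mathcal{P}(X)}=(X-b)\sqrt{(X-a)(X-c)}$ and the expression becomes
$$p_m^*(X) - \bigl[(X-b)q_{m-2}^*(X)\bigr]\sqrt{(X-a)(X-c)}.$$
Writing $\tilde q_{m-1}^*(X) := (X-b)q_{m-2}^*(X)$, a polynomial of degree $m-1$ with $\tilde q_{m-1}^*(b)=0$, the degenerate problem is exactly the conic-radical analogue of the nondegenerate construction, now carrying a built-in vanishing at $X=b$.

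The second step is the same conjugate-multiplication trick as in that proof. Multiplying by $p_m^*(X)+\tilde q_{m-1}^*(X)\sqrt{(X-a)(X-c)}$ yields the polynomial
$$[p_m^*(X)]^2 - (X-a)(X-c)[\tilde q_{m-1}^*(X)]^2,$$
which has degree at most $2m$ and a zero of order $2m$ at $X=0$, hence equals $kX^{2m}$ for a constant $k$. Substituting $s=1/X$ and rescaling by $\hat p_m(s)=s^m p_m^*(1/s)/\sqrt{|k|}$ and $\hat q_{m-1}(s)=\sqrt{ac}\,s^{m-1}\tilde q_{m-1}^*(1/s)/\sqrt{|k|}$, exactly as in the nondegenerate theorem, converts this into
$$\hat p_m^2(s) - \Bigl(s-\tfrac1a\Bigr)\Bigl(s-\tfrac1c\Bigr)\hat q_{m-1}^2(s) = \sign k.$$
The vanishing condition $\tilde q_{m-1}^*(b)=0$ transforms under $s=1/X$ into $\hat q_{m-1}(1/b)=0$, which is condition (b).

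To pin the right-hand side to $+1$ rather than $\sign k$, I would invoke reality. Since $0<a<b<c$ gives $1/c<1/a$, the quadratic $T(s)=(s-\tfrac1a)(s-\tfrac1c)$ is strictly negative on the open interval $(1/c,1/a)$, so at any point there one has $\hat p_m^2 - T\hat q_{m-1}^2 = \hat p_m^2 + |T|\,\hat q_{m-1}^2 \ge 0$; as this quantity is the constant $\sign k$, we conclude $\sign k=+1$, which is condition (a). The converse runs the computation backwards: given $(\hat p_m,\hat q_{m-1})$ obeying (a) and (b), condition (b) lets one factor $\tilde q_{m-1}^*(X)=(X-b)q_{m-2}^*(X)$, and reversing the rescalings and the conjugate multiplication recovers a function vanishing to order $2m$ at $X=0$, equivalent by the cited criterion to $2m$-periodicity.

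The step I expect to be the main obstacle is justifying that the order-$2m$-vanishing criterion still characterizes $2m$-periodicity once the caustic is degenerate: at $\nu=b$ the elliptic curve (\ref{eq:ellipticcurve}) acquires a node at $X=b$ and drops to genus zero, so the divisor/Abel-map argument of \cite{GaR} must be taken in the limit $\nu\to b$, or else replaced by the corresponding period condition for the third-kind differential $dX/\bigl((X-b)\sqrt{(X-a)(X-c)}\bigr)$ on the rational curve $Y^2=(X-a)(X-c)$, whose distinguished poles lie over $X=b$. Once that limiting equivalence is secured, the remaining work—the factorization of the double root, the conjugate multiplication, and the sign argument—is routine.
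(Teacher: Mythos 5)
The paper states this proposition without giving a proof, so there is nothing to diverge from; your argument is the natural one and is consistent with the machinery the paper does supply. You specialize the order-$2m$ vanishing criterion (equivalently, equation \eqref{PolyEqEven}) to $\nu=b$, absorb the double factor $(X-b)^2$ into a degree-$(m-1)$ polynomial vanishing at $b$, and your positivity argument on $(1/c,1/a)$ correctly pins the constant to $+1$ (here $\ep=\sign(b^2)=1$ and $(s-\tfrac1a)(s-\tfrac1c)<0$ on that gap), which is exactly how condition (a) together with $\hat q_{m-1}(1/b)=0$ must arise; the subsequent proposition in the paper, which turns $\hat q_{m-1}(1/b)=0$ into the Chebyshev-node condition $x_0=\cos(k\pi/m)$, confirms this reading. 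Your flagged obstacle is real but is also exactly how the paper frames the statement: the level set $\nu=b$ itself carries only the two $2$-periodic orbits in the plane $y=0$, and the proposition is explicitly presented as the resonance condition in the limit $\nu\to b$, so the vanishing criterion should indeed be justified as a limit of the nondegenerate case (or rederived on the nodal rational curve with the third-kind differential having poles over $X=b$), precisely as you propose.
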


The first condition from Proposition \ref{prop:ll} is the standard Pell equation describing extremal polynomials on one interval $[1/c,1/a]$, thus the polynomials $\hat{p}_m$ can be obtained as Chebyshev polynomials composed with an affine transformation $[1/c,1/a]\to[-1,1]$.
The additional condition $\hat{q}_{m-1}(1/b)=0$ implies an additional constraint on parameters $a$, $b$ and $c$.
We have the following

\begin{proposition} The polynomials $\hat {p}_m$ and the parameters $a, b, c$ have the following properties:
	\begin{enumerate}[(a)]
		\item
		$\hat{p}_m(s)
		=
		T_m
		\left(
		\dfrac{2ac}{c-a}
		\left(s-\dfrac{a+c}{2ac}\right)\right)$, 
		where $T_m$ is defined by \eqref{eq:cheb2};
		\item
		the condition $\hat{q}_{m-1}(1/b)=0$ is equivalent to 
		$$
		x_0=\cos \left(\frac{k}{m}\pi\right), \quad k=1, \dots, m-1,
		$$
		for
		$$
		x_0=\frac{2ac-b(c+a)}{c-a}.
		$$
	\end{enumerate}
\end{proposition}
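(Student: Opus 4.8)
The plan is to establish both parts by exploiting the explicit Chebyshev parametrization from \eqref{eq:cheb2} together with the known structure of the Pell equation on a single interval. The underlying principle is that the polynomials $\hat p_m$ satisfying condition (a) of Proposition~\ref{prop:ll} are, up to the normalization fixing the leading coefficient, the classical Chebyshev polynomials pulled back along the unique affine map sending $[1/c,1/a]$ to $[-1,1]$. So the first step is to write down that affine map explicitly. Since we need $s=1/c\mapsto -1$ and $s=1/a\mapsto 1$ (or the reverse), a direct computation gives the linear function $\ell(s)=\dfrac{2ac}{c-a}\left(s-\dfrac{a+c}{2ac}\right)$, which one checks sends the interval endpoints to $\mp 1$ correctly; then $\hat p_m(s)=T_m(\ell(s))$ is monic-up-to-scale of degree $m$ and satisfies the one-interval Pell relation $\hat p_m^2-(s-1/a)(s-1/c)\hat q_{m-1}^2=1$ by the classical Chebyshev Pell identity. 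This proves part (a).

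For part (b) the idea is to translate the vanishing condition $\hat q_{m-1}(1/b)=0$ into a statement purely about $T_m$ and its derivative, and then read off the location of the zeros. First I would differentiate or factor the Pell identity: writing $P(s)=(s-1/a)(s-1/c)$, the relation $\hat p_m^2-P\hat q_{m-1}^2=1$ shows that the zeros of $\hat q_{m-1}$ are exactly the critical points of $\hat p_m$ that lie strictly between the two branch points $1/c$ and $1/a$ where $\hat p_m$ attains its extreme values $\pm 1$. Concretely, $\hat q_{m-1}(s)=0$ at a point $s_0$ forces $\hat p_m(s_0)^2=1$ with $\hat p_m'(s_0)=0$, i.e.\ $s_0$ is one of the interior alternation points. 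Under the substitution $x=\ell(s)$, these are precisely the points where $T_m(x)=\pm1$ with $T_m'(x)=0$, namely $x=\cos(k\pi/m)$ for $k=1,\dots,m-1$. Hence $\hat q_{m-1}(1/b)=0$ is equivalent to $\ell(1/b)=\cos(k\pi/m)$ for some $k\in\{1,\dots,m-1\}$. Evaluating $\ell$ at $s=1/b$ gives exactly $x_0=\dfrac{2ac-b(c+a)}{c-a}$, which yields the stated condition.

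The main obstacle, and the step requiring the most care, is the second one: justifying that the zeros of $\hat q_{m-1}$ coincide with the interior critical points $\cos(k\pi/m)$ of the Chebyshev polynomial and that there are exactly $m-1$ of them, all simple. This requires knowing that $T_m$ has precisely $m-1$ interior extrema on $(-1,1)$ at the claimed cosines, that $T_m^2-1$ factors as $(x^2-1)$ times the square of a degree-$(m-1)$ polynomial whose roots are those extrema, and that the affine change of variables transports this factorization faithfully to the $s$-variable Pell identity without introducing spurious factors. I would handle this by directly differentiating the Chebyshev identity $T_m^2(x)-(x^2-1)U_{m-1}^2(x)=1$ (the standard Pell relation, with $U_{m-1}$ the degree-$(m-1)$ factor) and matching it term-by-term against condition (a) after the substitution $x=\ell(s)$, thereby identifying $\hat q_{m-1}(s)$ with a constant multiple of $U_{m-1}(\ell(s))$ and using the classical fact that the zeros of $U_{m-1}$ are exactly $\cos(k\pi/m)$ for $k=1,\dots,m-1$.
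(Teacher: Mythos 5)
Your proposal is correct and follows essentially the same route as the paper: the affine map between $[1/c,1/a]$ and $[-1,1]$, the classical Chebyshev Pell identity $T_m^2(x)-(x^2-1)U_{m-1}^2(x)=1$, and the identification of the zeros of $\hat q_{m-1}$ with the interior alternation points $\cos(k\pi/m)$. The paper's own proof is terser (it invokes Corollary \ref{cor:periodicPell} and simply states that item (b) follows from $h^{-1}(1/b)=x_k$), so your explicit argument that a zero of $\hat q_{m-1}$ forces $\hat p_m^2=1$ and $\hat p_m'=0$ merely fills in a step the paper leaves implicit.
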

\begin{proof}
	The increasing affine transformation $h:[-1, 1]\rightarrow [1/c, 1/a]$ is given by the formula $h(s)=\hat ls + \hat m$,
	where 
	$$
	\hat m = \frac{a+c}{2ac}, \quad \hat l = \frac{c-a}{2ac}.
	$$
	We apply Corollary \ref{cor:periodicPell}.
	The internal extremal points of the Chebyshev polynomial $T_m$ of degree $m$ on the interval $[-1, 1]$ are given by
	$$
	x_k=\cos \left(\frac{k}{m}\pi\right), \quad k=1, \dots, m-1,
	$$
	according to the formula \eqref{eq:cheb2}. The second item follows from $h^{-1}(1/b)=x_k$. 
\end{proof}

\section*{Acknowledgements}

This research is supported by the Discovery Project No.~DP190101838 \emph{Billiards within confocal quadrics and beyond} from the Australian Research Council.
Research of V.~D. and M.~R. is also supported by Mathematical Institute SANU, the Ministry of Education, Science and Technological Development of the Republic of Serbia, and the Science Fund of Serbia.

\bibliographystyle{amsalpha}
\nocite{*}
\bibliography{References1}

\end{document}